\newcommand{\be}{\begin{equation} }
	\newcommand{\ee}{\end{equation}}
\newcommand{\bee}{\begin{equation*} }
	\newcommand{\eee}{\end{equation*}}
\newcommand{\bse}{\begin{subequations}}
	\newcommand{\ese}{\end{subequations}}
\DeclareMathOperator*{\im}{Im}
\DeclareMathOperator*{\re}{Re}
\newcommand*{\dif}{\mathop{}\!\mathrm{d}}
\newtheorem{theorem}{Theorem}[section]
\newtheorem{corollary}[theorem]{Corollary}
\newtheorem{lemma}{Lemma}[section]
\newtheorem{Proposition}{Proposition}[section]
\theoremstyle{remark}
\newtheorem{remark}{Remark}[section]
\theoremstyle{definition}
\numberwithin{equation}{section}
\title[$L^2$-Orbital Stability for DNLS equation]
{Orbital Stability of Soliton for the Derivative Nonlinear Schr\"odinger Equation in the $L^2$ Space}
\author[]{Yiling YANG}
\address[Yiling Yang]{School of Mathematical Sciences  and Key Laboratory   for Nonlinear Science, Fudan   University,  P. R. China.}\email{ylyang19@fudan.edu.cn}
\author[]{Engui Fan}
\address[Engui Fan]{School of Mathematical Sciences  and Key Laboratory  for Nonlinear Science, Fudan   University,  P. R. China.}\email{faneg@fudan.edu.cn}
\author[]{Yue Liu}
\address[Yue Liu]{Department of Mathematics, University of Texas at Arlington, TX 76019}\email{yliu@uta.edu}
\date{}                                           % Activate to display a given date or no date
\begin{document}
	%\today
	\thispagestyle{empty}

	\begin{abstract}
		\baselineskip=17pt	
In this paper, we establish the orbital stability of the 1-soliton solution for the derivative nonlinear Schr\"odinger equation under perturbations in $L^2(\mathbb{R})$. We demonstrate this stability by utilizing the B\"acklund transformation associated with the Lax pair and by applying the first conservation quantity in $L^2(\mathbb{R}).$
	\end{abstract}
	
	\maketitle
		\noindent {\bf Keywords: }  Derivative nonlinear Schr\"odinger 	equation; B\"acklund transformation; Soliton; Orbital stability
	
	\noindent {\bf AMS Subject Classification 2020:} 35Q51; 35Q15; 37K15; 35Q35.
	\baselineskip=17pt

	\setcounter{tocdepth}{2} \tableofcontents

	\section {Introduction}
	\quad
In the present paper, we investigate the $L^2$-orbital stability of the 1-soliton solution for the derivative nonlinear Schrödinger (DNLS) equation
\begin{align}\label{DNLS}
	iq_t+q_{xx}+i(|q|^2q)_x=0,
\end{align}
where $q(t,x)$ is the function in dimensionless space-time variables $(t,x)$. The  equation (\ref{DNLS}) first appeared in the literature as a model for the propagation of large-wavelength Alfv\'en waves in plasma. It governs the evolution of small but finite amplitude nonlinear Alfv\'en waves that propagate quasi-parallel to the magnetic field in space plasma physics \cite{Mj1976, Mj1989}. The DNLS equation \eqref{DNLS} is a canonical dispersive equation derived from the Magneto-Hydrodynamic equations in the presence of the Hall effect \cite{Mio1976}. Additionally, it is used to describe sub-picosecond pulses in single-mode optical fibers \cite{GP, DM1983, NM1981}.

\vskip 0.1cm

\noindent {\bf Global existence and main result.}
One crucial element required for our development is a theory of existence for the initial-value problem. Our stability result is primarily related to  global existence for arbitrary data in the $L^2 $ space.

The local well-posedness of the Cauchy problem for the DNLS equation in the space  $  H^s(\mathbb{R}), s > 3/2$ was established by Tsutsumi and Fukuda using a parabolic regularization in \cite{Tsutsumi}. Later, Takaoka further generalized the local well-posedness in spaces of lower regularity, specifically $  H^s(\mathbb{R}), s \geq 1$, employing the Fourier transform restriction method \cite{Takaoka}. With the aid of Strichartz estimates, Hayashi in \cite{Hayashi} proved the global existence of solutions to the DNLS equation in $  H^1(\mathbb{R})$, provided that the initial data are small in the $L^2(\mathbb{R})$  norm. Moreover, Wu in \cite{Wu2015} removed the necessity of Hayashi's requirement and established that for $|q_0|_{L^2}\leq 4\pi$, the DNLS equation \eqref{DNLS} remains globally well-posed in $H^1(\mathbb{R})$ and extends to $H^{1/2}(\mathbb{R})$ as demonstrated in \cite{Guowu2017}.

%Guo and Wu in \cite{Guowu2017} obtained the global existence of solutions to the DNLS equation \eqref{DNLS} in $  H^{1/2}(\mathbb{R})$ under  $\|q_0\|_{L^2}\leq 4\pi$.

It has been shown that the DNLS equation \eqref{DNLS} is completely integrable and admits a Lax pair \cite{KN1978}. %Through the inverse scattering transform and the Riemann-Hilbert method, and without the small norm assumption, global well-posedness for the DNLS equation \eqref{DNLS} has been established in weighted Sobolev spaces $H^{2,2}(\mathbb{R})$ \cite{Jenkins2018} and $H^{2}(\mathbb{R})\cap H^{1,1}(\mathbb{R})$ \cite{P2018}, respectively.
Integrability is also utilized to study  global well-posedness of the DNLS equation \eqref{DNLS} \cite{Jenkins2018, LJQ,P2017, P2018}. Recently, based on its complete integrability, the well-posedness for the DNLS equation \eqref{DNLS} in low regularity cases has been intensively studied, for example, in \cite{INVENT2022,KillipIMRN,killip2023}.  The latest result in \cite{KillipDNLS} provides a unique global solution $ q \in C([0, \infty), H^s (\mathbb{R})) $ with any $ s \geq 0 $, for the Cauchy problem associated with the DNLS equation \eqref{DNLS}.
\begin{Proposition}\cite{KillipDNLS}
	The DNLS equation \eqref{DNLS} is globally well-posed in $H^s(\mathbb{R})$, for every $s\geq 0$. More precisely, given any $T>0$,  an initial data $q(0)\in L^2(\mathbb{R})$  and a sequence of Schwartz-class initial data $q_n(0)$ converging to $q(0)$ in $ L^2(\mathbb{R})$, then the  sequence of corresponding solutions $q_n(t)$ with initial data $  q_n(0) $ converges to $q(t)$ in $ L^2(\mathbb{R})$ uniformly when $0\leq t\leq T$.
\end{Proposition}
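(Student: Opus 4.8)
The plan is to exploit the complete integrability of the DNLS equation via the method of commuting flows, the same circle of ideas that yields sharp global well-posedness for KdV, cubic NLS and mKdV at low regularity. The three pillars are: a priori control of the $H^s$ norm uniformly in time, equicontinuity of orbits in $L^2$, and approximation of the DNLS flow by a family of smooth, globally well-posed Hamiltonian flows that commute with it. Since the statement already asserts existence together with the stated convergence property, I would organize the argument around producing the data-to-solution map on Schwartz data and then extending it continuously to $L^2$.

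First I would set up the Kaup--Newell spectral problem associated with the Lax pair and introduce the perturbation determinant, equivalently $\log$ of the transmission coefficient, $\alpha(\kappa;q)$ at spectral parameter $\kappa$. Because $\alpha(\kappa;\cdot)$ is conserved under the flow, its asymptotic expansion as $\kappa\to\infty$ generates the polynomial conservation laws, and a frequency-localized version yields the microscopic conservation law needed to bound $\|q(t)\|_{H^s}$ by a quantity depending only on $\|q(0)\|_{H^s}$. The base case $s=0$ is furnished directly by mass conservation, $\|q(t)\|_{L^2}=\|q(0)\|_{L^2}$, which gives the uniform $L^2$ bound; the higher norms are then controlled by letting $\kappa$ range and summing the contributions of $\log\alpha$ over dyadic frequency scales.

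Next I would establish the key analytic ingredient, \emph{equicontinuity}: a bounded, $L^2$-equicontinuous family of initial data remains equicontinuous in $L^2$, uniformly on $[0,T]$. Concretely, one controls the high-frequency tail $\int_{|\xi|>N}|\widehat{q(t)}(\xi)|^2\,d\xi$ uniformly in $t$ and uniformly over the family, again through the conserved quantities $\log\alpha(i\kappa;q)$ and their behavior as $\kappa\to\infty$. This plays the role of tightness in frequency: together with the uniform $L^2$ bound it upgrades weak to strong convergence. I expect this to be the main obstacle, since the derivative nonlinearity makes the spectral estimates more delicate than in the NLS case, and one must track the gauge transformation that linearizes the Kaup--Newell problem as well as the mass-threshold subtleties in the resolvent expansion.

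Finally, I would introduce the regularized flow $H_\kappa$ generated by the perturbation determinant at finite $\kappa$, which is smooth, globally well-posed on Schwartz data, and commutes with the DNLS flow; as $\kappa\to\infty$ its solutions converge to those of DNLS. Combining this approximation with the a priori bounds of the first step and the equicontinuity of the second, and invoking the stated well-posedness on Schwartz data, the data-to-solution map extends continuously to all of $L^2$: given Schwartz $q_n(0)\to q(0)$ in $L^2$, the orbits $\{q_n(t)\}$ are bounded and equicontinuous uniformly on $[0,T]$, so a Rellich-type compactness argument, with equicontinuity supplying the missing frequency tightness, extracts a limit that is identified with the unique $L^2$ solution $q(t)$, and the convergence is uniform on $[0,T]$. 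Uniqueness at the $L^2$ level is then automatic from the construction of the map as the continuous extension from Schwartz data.
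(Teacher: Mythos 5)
This Proposition is imported verbatim from \cite{KillipDNLS}, and the paper under review gives no proof of it, so there is no internal argument to compare against; your outline faithfully reproduces the strategy of that cited reference itself --- conservation laws generated by the perturbation determinant of the Kaup--Newell spectral problem, large-data $L^2$-equicontinuity (established in the companion work \cite{KillipIMRN}), and approximation by commuting regularized flows $H_\kappa$ in the style of Killip--Vi\c{s}an. At the level of detail given it is an accurate roadmap of the known proof rather than a self-contained argument, with the genuinely hard steps (equicontinuity without any mass constraint, the tightness/local-smoothing input needed for convergence uniform on $[0,T]$, and convergence of the $H_\kappa$ flows at the scaling-critical regularity $s=0$) correctly identified but deferred.
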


The $L^2$ space considered here seems natural to establish our stability:
the DNLS equation \eqref{DNLS} is invariant under the scaling transformation% \cite{KillipDNLS}
\begin{align*}
	q(t,x)\to q_\lambda(t,x)=\sqrt{\lambda}q(\lambda^2t,\lambda x),\qquad \lambda>0,
\end{align*}
 which implies that $\| q(t,\cdot)\|_{L^2(\mathbb{R})}=  \| q_\lambda(t,\cdot)\|_{L^2(\mathbb{R})}$.
On the other hand, it is known that the infinitely many conserved quantities of  the DNLS equation \eqref{DNLS} play an important role in the well-posedness theory. The first three  are the mass,  energy and momentum conservation laws
respectively given by
\begin{align}
	&M(q):=\int_{\mathbb{R}}|q|^2\dif x,\label{def: mass}\\
	&E(q):=-\frac{1}{2}\int_{\mathbb{R}} i(\bar{q}_xq-q_x\bar{q})+|q|^4\dif x,\\
	&P(q):=\int_{\mathbb{R}} |q_x|^2+\frac{3i}{4}|q|^2(\bar{q}_xq-q_x\bar{q})+\frac{1}{2}|q|^6\dif x,
\end{align}
where the mass conservation quantity $ M $ in $ L^2 $ is only necessary for the development of stability.

The exact expression of the 1-soliton solution for the DNLS equation \eqref{DNLS} under the spectrum $z_0$ with Im$z_0^2\neq0$, is provided in \cite{K1999, P2017, H2011}.
\begin{align}
	\psi^{z_0}(t,x)=&\left( \frac{z_0e^{2(\im z_0^2 x+2\im z_0^4 t)}+\bar{z}_0e^{-2(\im z_0^2 x+2\im z_0^4 t)}}{z_0e^{-2(\im z_0^2 x+2\im z_0^4 t)}+\bar{z}_0e^{2(\im z_0^2 x+2\im z_0^4 t)}}\right)^2\times\nonumber\\ &\frac{2i(z_0^2-\bar{z}_0^2)e^{-2i(\re z_0^2 x+2\re z_0^4 t)}}{z_0e^{-2(\im z_0^2 x+2\im z_0^4 t)}+\bar{z}_0e^{2(\im z_0^2 x+2\im z_0^4 t)}}.\label{1sol}
\end{align}
\begin{remark}
Although there are different explicit forms for the 1-soliton of the DNLS equation \eqref{DNLS}, for example in \cite{INVENT2022}, it is essentially the same as (\ref{1sol}) when comparing parameters.
\end{remark}

The aim of this paper is to demonstrate the $ L^2$-orbital stability of the 1-soliton for the DNLS equation \eqref{DNLS}. The main result is stated as follows.
\begin{theorem}\label{mainthm}
	Let $z_0$ be a complex number with $\im z_0^2\neq0$, and let the associated  1-soliton $\psi^{z_0}(t, x)$ be given by (\ref{1sol}).
There   exist  a real small positive constant $\varepsilon>0$ and a constant $C$ such that for
 any   $q_0\in L^2(\mathbb{R}),$  if
	\begin{align*}
		\|q_0(\cdot) -\psi^{z_0}(0, \cdot)\|_{L^2} <  \varepsilon,
	\end{align*}
then there exists a unique global solution $q$ to the Cauchy problem associated with the DNLS equation \eqref{DNLS} with the initial value $q(0) = q_0$ and there exists a constant $z_1$ satisfying
%$|z_1-z_0|\lesssim 	\|q_0-\psi^{z_0}_0\|_2$.
	\begin{align*}
		\sup_{t\in\mathbb{R}^+}\inf_{a,b\in\mathbb{R}}\|q(t,\cdot)-e^{bi}\psi^{z_1}(t,\cdot+a) \|_{L^2}+|z_1-z_0| <  C \varepsilon.
	\end{align*}
\end{theorem}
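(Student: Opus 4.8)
The plan is to prove orbital stability by reducing it, via the Bäcklund transformation $B$ attached to the Kaup--Newell Lax pair, to the trivial stability of the zero solution. The crucial algebraic fact is that $B$, when built from the eigenfunction of the Lax operator at the discrete eigenvalue $z_0$, sends the one-soliton $\psi^{z_0}$ to the zero background; conversely its inverse $B^{-1}$ adds one soliton, carrying a function close to $0$ to a function close, up to translation and phase, to $\psi^{z_1}$. Since the mass $M(q)=\int_{\mathbb R}|q|^2\dif x$ is conserved and controls the $L^2$ norm, the zero solution is stable in $L^2$; this smallness can then be transported along the flow and pushed back through $B^{-1}$ onto the soliton orbit.

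First I would make the transformation precise at the level of $L^2$ data. For a potential $q$ I solve the spatial part of the Lax pair $\phi_x=U(z;q)\phi$ for the Jost-type eigenfunction at a spectral point $z$, and write $B_z[q]$ for the transformed potential given by the explicit Darboux formula in $q,z,\phi_1,\phi_2$. The first task is to locate the correct spectral parameter $z_1$ for the perturbed data: because $q_0$ is $L^2$-close to $\psi^{z_0}(0,\cdot)$, a perturbation/implicit-function argument on the scattering problem yields a unique discrete eigenvalue $z_1$ with $|z_1-z_0|\le C\varepsilon$ and its associated eigenfunction. Setting $u_0:=B_{z_1}[q_0]$, the Darboux formula together with the joint Lipschitz dependence of $B_z[q]$ on $(z,q)$ gives
\[
\|u_0\|_{L^2}=\bigl\|B_{z_1}[q_0]-B_{z_0}[\psi^{z_0}(0,\cdot)]\bigr\|_{L^2}\le C\bigl(\|q_0-\psi^{z_0}(0,\cdot)\|_{L^2}+|z_1-z_0|\bigr)\le C\varepsilon,
\]
using the identity $B_{z_0}[\psi^{z_0}(0,\cdot)]=0$.

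Next I would propagate in time. Because $B_{z_1}$ is constructed from a simultaneous solution of both components of the Lax pair, it intertwines the DNLS flow: if $q(t)$ is the global solution with data $q_0$ (guaranteed, together with $L^2$-continuous dependence on the data, by the cited global well-posedness result), then $u(t):=B_{z_1}[q(t)]$ is a DNLS solution whose value at $t=0$ is $u_0$, hence equals the solution with data $u_0$ by uniqueness. The intertwining is verified first for Schwartz data, where the eigenfunctions are classical, and then extended to $L^2$ data through the approximation statement in the Proposition. Conservation of mass then yields $\|u(t)\|_{L^2}=\|u_0\|_{L^2}\le C\varepsilon$ for all $t$. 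Finally, applying the inverse transformation $q(t)=B_{z_1}^{-1}[u(t)]$ and the Lipschitz bound for $B_{z_1}^{-1}$ near $0$, for which $B_{z_1}^{-1}[0]=\psi^{z_1}$ up to the symmetry parameters, I obtain uniformly in $t$
\[
\inf_{a,b\in\mathbb R}\|q(t,\cdot)-e^{bi}\psi^{z_1}(t,\cdot+a)\|_{L^2}\le C\|u(t)\|_{L^2}\le C\varepsilon,
\]
which, together with $|z_1-z_0|\le C\varepsilon$, is the assertion.

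The main obstacle is analytic rather than structural: establishing that both $B_z$ and $B_z^{-1}$ are Lipschitz maps in the $L^2$ topology on neighborhoods of $\psi^{z_0}$ and of $0$ respectively, at the very low regularity where $q$ is merely square-integrable. This requires solving the Lax-pair ODE with an $L^2$ potential, controlling the eigenfunction components $\phi_1,\phi_2$ and their differences in terms of $\|q-\psi^{z_0}\|_{L^2}$ and $|z-z_0|$, and checking that the Darboux denominator stays bounded away from zero so that the explicit formula does not degenerate. The spectral-perturbation step producing $z_1$ and the justification of the intertwining by density are the remaining delicate points; once these $L^2$ estimates are in place, the stability statement follows from mass conservation as above.
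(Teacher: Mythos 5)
Your proposal is correct in outline and takes essentially the same route as the paper: an implicit-function/Lyapunov--Schmidt location of the perturbed eigenvalue $z_1$ (Lemma~\ref{lemma1}), an $L^2$-Lipschitz forward B\"acklund map sending $q_0$ to a small datum (Lemma~\ref{lemma2.2}), propagation of smallness by mass conservation, the inverse B\"acklund map back onto the soliton orbit with constant symmetry parameters $a,b$ and identification with $q$ by uniqueness (Lemma~\ref{lemma3.3}), followed by the $H^3$-first-then-density argument using the cited $L^2$ well-posedness. The analytic points you flag as the main obstacles --- eigenfunction estimates for $L^2$ potentials, non-degeneracy of the Darboux denominators, and justification of the intertwining for smooth data --- are exactly what the paper's Propositions~\ref{Prop: 2.1}--\ref{Prop 2.5} and \ref{Prop 3.1} supply.
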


\noindent {\bf Prior work and motivation.} There are two typical approaches for studying stability issues of nonlinear dispersive equations. One method is the variational approach, which constructs solitary waves as energy minimizers under appropriate constraints \cite{Lions1982, W1986}. Another approach to studying stability is to linearize the equation around the solitary waves; it is expected that nonlinear stability is governed by the linearized equation \cite{G1987}.
%The orbital stability is often studied by  the method of constrained minimization \cite{G1987}
%Fortunately, there are many other tools to  study  low-regularity orbital stability based on  complete integrability.
%Based on the integrable structure, there have been many remarkable results in recent years in the aspect of orbital stability.
In recent years, many other tools have emerged for studying low-regularity orbital stability based on complete integrability. For instance, Merle and Vega in \cite{MVkdv} demonstrated $L^2$-stability and asymptotic stability of solitons for  the Korteweg-de Vries (KdV) equation by invoking the Miura transformation. This special B\"acklund transformation transforms solutions of the KdV equation into solutions of the modified KdV equation.
On another front, Hoffman and Wayne in \cite{HW2013} formulated an abstract orbital stability result for soliton solutions of integrable equations through the B\"acklund transformations. They extended their analysis to include the orbital stability of  the sine-Gordon equation and the Toda lattice. Similarly,  they in \cite{A2015} established that modified KdV breathers are $H^1$-stable, and negative energy breathers are asymptotically stable in $H^1(\mathbb{R}),$ viewing the problem from the perspective of dynamical systems. Additionally, Mizumachi and Pelinovsky in \cite{PNLS} proved $L^2$-orbital stability of the  nonlinear Schr\"odinger (NLS)  1-soliton using the B\"acklund transformation. (This transformation is also referred to as the Darboux transformation, because it can not only transform one solution to another but also transform the eigenvector to eigenvector of the corresponding Lax pair associated with these two solutions.) They further in \cite{MTM} applied this method to the Massive Thirring Model.  Koch and Tataru in \cite{TATARU} proved low-regularity orbital stability of NLS multi-solitons. Utilizing the complete integrability of the KdV equation, Killip and Vi\c{s}an in \cite{Killip2022} employed the (doubly) renormalized perturbation to establish the determinant of orbital stability of KdV multi-solitons in $H^{-1}(\mathbb{R})$. Contreras and Pelinovsky in \cite{CP2014} established $L^2$-orbital stability of multi-solitons of the NLS equation through a dressing transformation. To the best of our knowledge, these ideas have not yet been applied to the DNLS equation \eqref{DNLS}.

For the DNLS equation \eqref{DNLS}, Guo et al., via constructing three appropriate invariants of motion, obtained the stability of solitary waves in $H^1(\mathbb{R})$ for a class of solitons that admit certain limits of the spectrum parameter \cite{GuoJDE1995}. Their result is further supplemented in \cite{CO2006, KW2018}. Colin and Ohta in \cite{CO2006} relaxed the limiting conditions on the spectrum parameter in \cite{GuoJDE1995} and established the stability of solitary waves in $H^1(\mathbb{R}).$  Kwon and Wu in \cite{KW2018} studied the zero-mass case, which was not covered by the aforementioned papers. They provided orbital stability in $H^1(\mathbb{R})$  and presented a self-similar type blow-up criterion for solutions with critical mass $4\pi$. Regarding multi-solitons, with appropriate assumptions on the speeds and frequencies of the composing solitons, Le Coz and Wu established the orbital stability of multi-solitons in $H^1(\mathbb{R})$ in \cite{LW2018}.

We study the orbital stability of the 1-soliton for the DNLS equation motivated by the following problems:
\begin{itemize}
	\item  Until recently, Harrop-Griffiths et al. obtained the global well-posedness of the DNLS equation \eqref{DNLS} in $L^2$ space \cite{KillipDNLS}. Therefore, it is natural to consider the $L^2$-orbital stability of the DNLS solitons.

\item  Comparing with the classical NLS equation \cite{PNLS}, the Lax pair of the DNLS equation \eqref{DNLS} possesses more symmetries. This results in its B\"acklund transformation being more complicated than that of the NLS equation, which, in turn, causes additional difficulties when using perturbation theory to analyze the properties of the solution of the Lax pair  of the DNLS equation \eqref{DNLS}.
\end{itemize}

Our approach to the stability of solitons is inspired  by the work of Mizumachi and Pelinovsky in  \cite{PNLS}.
The main steps to deal with it  are sketched in Figure \ref{result1}.
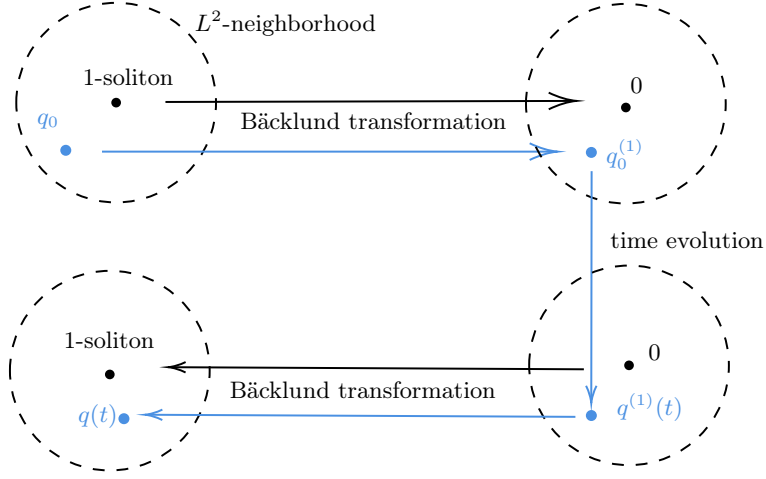
\begin{figure}
	\centering	
	\tikzset{every picture/.style={line width=0.75pt}} %set default line width to 0.75pt        	
	\begin{tikzpicture}[x=0.75pt,y=0.75pt,yscale=-0.8,xscale=0.8]
		%uncomment if require: \path (0,353); %set diagram left start at 0, and has height of 353		
		%Shape: Circle [id:dp019703134260783628]
		\draw  [dash pattern={on 4.5pt off 4.5pt}] (51.29,70.08) .. controls (51.29,35.31) and (79.48,7.13) .. (114.25,7.13) .. controls (149.02,7.13) and (177.21,35.31) .. (177.21,70.08) .. controls (177.21,104.85) and (149.02,133.04) .. (114.25,133.04) .. controls (79.48,133.04) and (51.29,104.85) .. (51.29,70.08) -- cycle ;
		%Shape: Circle [id:dp782071219101867]
		\draw  [fill={rgb, 255:red, 0; green, 0; blue, 0 }  ,fill opacity=1 ] (111.83,70.08) .. controls (111.83,68.75) and (112.92,67.67) .. (114.25,67.67) .. controls (115.58,67.67) and (116.67,68.75) .. (116.67,70.08) .. controls (116.67,71.42) and (115.58,72.5) .. (114.25,72.5) .. controls (112.92,72.5) and (111.83,71.42) .. (111.83,70.08) -- cycle ;
		%Shape: Circle [id:dp11529496180500032]
		\draw  [color={rgb, 255:red, 74; green, 144; blue, 226 }  ,draw opacity=1 ][fill={rgb, 255:red, 74; green, 144; blue, 226 }  ,fill opacity=1 ] (79.67,100.17) .. controls (79.67,98.6) and (80.94,97.33) .. (82.5,97.33) .. controls (84.06,97.33) and (85.33,98.6) .. (85.33,100.17) .. controls (85.33,101.73) and (84.06,103) .. (82.5,103) .. controls (80.94,103) and (79.67,101.73) .. (79.67,100.17) -- cycle ;
		%Straight Lines [id:da16661068254248868]
		\draw    (145.33,69.67) -- (402.44,69.01) ;
		\draw [shift={(404.44,69)}, rotate = 179.85] [color={rgb, 255:red, 0; green, 0; blue, 0 }  ][line width=0.75]    (17.49,-5.26) .. controls (11.12,-2.23) and (5.29,-0.48) .. (0,0) .. controls (5.29,0.48) and (11.12,2.23) .. (17.49,5.26)   ;
		%Straight Lines [id:da7312929648932398]
		\draw [color={rgb, 255:red, 74; green, 144; blue, 226 }  ,draw opacity=1 ]   (105.33,101) -- (390.44,101) ;
		\draw [shift={(392.44,101)}, rotate = 180] [color={rgb, 255:red, 74; green, 144; blue, 226 }  ,draw opacity=1 ][line width=0.75]    (17.49,-5.26) .. controls (11.12,-2.23) and (5.29,-0.48) .. (0,0) .. controls (5.29,0.48) and (11.12,2.23) .. (17.49,5.26)   ;
		%Shape: Circle [id:dp9138446060758156]
		\draw  [dash pattern={on 4.5pt off 4.5pt}] (372.63,70.75) .. controls (372.63,35.98) and (400.81,7.79) .. (435.58,7.79) .. controls (470.35,7.79) and (498.54,35.98) .. (498.54,70.75) .. controls (498.54,105.52) and (470.35,133.71) .. (435.58,133.71) .. controls (400.81,133.71) and (372.63,105.52) .. (372.63,70.75) -- cycle ;
		%Shape: Circle [id:dp0665018595911353]
		\draw  [fill={rgb, 255:red, 0; green, 0; blue, 0 }  ,fill opacity=1 ] (433.17,73.17) .. controls (433.17,71.83) and (434.25,70.75) .. (435.58,70.75) .. controls (436.92,70.75) and (438,71.83) .. (438,73.17) .. controls (438,74.5) and (436.92,75.58) .. (435.58,75.58) .. controls (434.25,75.58) and (433.17,74.5) .. (433.17,73.17) -- cycle ;
		%Shape: Circle [id:dp40762258247212446]
		\draw  [color={rgb, 255:red, 74; green, 144; blue, 226 }  ,draw opacity=1 ][fill={rgb, 255:red, 74; green, 144; blue, 226 }  ,fill opacity=1 ] (411,101.5) .. controls (411,99.94) and (412.27,98.67) .. (413.83,98.67) .. controls (415.4,98.67) and (416.67,99.94) .. (416.67,101.5) .. controls (416.67,103.06) and (415.4,104.33) .. (413.83,104.33) .. controls (412.27,104.33) and (411,103.06) .. (411,101.5) -- cycle ;
		%Straight Lines [id:da3669425716734678]
		\draw [color={rgb, 255:red, 74; green, 144; blue, 226 }  ,draw opacity=1 ]   (414,113.33) -- (413.78,257.67) ;
		\draw [shift={(413.78,259.67)}, rotate = 270.09] [color={rgb, 255:red, 74; green, 144; blue, 226 }  ,draw opacity=1 ][line width=0.75]    (10.93,-3.29) .. controls (6.95,-1.4) and (3.31,-0.3) .. (0,0) .. controls (3.31,0.3) and (6.95,1.4) .. (10.93,3.29)   ;
		%Shape: Circle [id:dp1957808451697225]
		\draw  [dash pattern={on 4.5pt off 4.5pt}] (374.63,235.83) .. controls (374.63,201.06) and (402.81,172.88) .. (437.58,172.88) .. controls (472.35,172.88) and (500.54,201.06) .. (500.54,235.83) .. controls (500.54,270.6) and (472.35,298.79) .. (437.58,298.79) .. controls (402.81,298.79) and (374.63,270.6) .. (374.63,235.83) -- cycle ;
		%Shape: Circle [id:dp30179402083089335]
		\draw  [fill={rgb, 255:red, 0; green, 0; blue, 0 }  ,fill opacity=1 ] (435.17,235.83) .. controls (435.17,234.5) and (436.25,233.42) .. (437.58,233.42) .. controls (438.92,233.42) and (440,234.5) .. (440,235.83) .. controls (440,237.17) and (438.92,238.25) .. (437.58,238.25) .. controls (436.25,238.25) and (435.17,237.17) .. (435.17,235.83) -- cycle ;
		%Shape: Circle [id:dp0510724179137414]
		\draw  [color={rgb, 255:red, 74; green, 144; blue, 226 }  ,draw opacity=1 ][fill={rgb, 255:red, 74; green, 144; blue, 226 }  ,fill opacity=1 ] (411,267.5) .. controls (411,265.94) and (412.27,264.67) .. (413.83,264.67) .. controls (415.4,264.67) and (416.67,265.94) .. (416.67,267.5) .. controls (416.67,269.06) and (415.4,270.33) .. (413.83,270.33) .. controls (412.27,270.33) and (411,269.06) .. (411,267.5) -- cycle ;
		%Shape: Circle [id:dp9245657697411429]
		\draw  [dash pattern={on 4.5pt off 4.5pt}] (47.29,239.17) .. controls (47.29,204.4) and (75.48,176.21) .. (110.25,176.21) .. controls (145.02,176.21) and (173.21,204.4) .. (173.21,239.17) .. controls (173.21,273.94) and (145.02,302.13) .. (110.25,302.13) .. controls (75.48,302.13) and (47.29,273.94) .. (47.29,239.17) -- cycle ;
		%Straight Lines [id:da30150450970094256]
		\draw    (409.11,238.33) -- (149.11,237.01) ;
		\draw [shift={(147.11,237)}, rotate = 0.29] [color={rgb, 255:red, 0; green, 0; blue, 0 }  ][line width=0.75]    (10.93,-3.29) .. controls (6.95,-1.4) and (3.31,-0.3) .. (0,0) .. controls (3.31,0.3) and (6.95,1.4) .. (10.93,3.29)   ;
		%Straight Lines [id:da4758231510623112]
		\draw [color={rgb, 255:red, 74; green, 144; blue, 226 }  ,draw opacity=1 ]   (403.78,268.33) -- (134.44,267.01) ;
		\draw [shift={(132.44,267)}, rotate = 0.28] [color={rgb, 255:red, 74; green, 144; blue, 226 }  ,draw opacity=1 ][line width=0.75]    (10.93,-3.29) .. controls (6.95,-1.4) and (3.31,-0.3) .. (0,0) .. controls (3.31,0.3) and (6.95,1.4) .. (10.93,3.29)   ;
		%Shape: Circle [id:dp9274613069600117]
		\draw  [fill={rgb, 255:red, 0; green, 0; blue, 0 }  ,fill opacity=1 ] (107.83,241.58) .. controls (107.83,240.25) and (108.92,239.17) .. (110.25,239.17) .. controls (111.58,239.17) and (112.67,240.25) .. (112.67,241.58) .. controls (112.67,242.92) and (111.58,244) .. (110.25,244) .. controls (108.92,244) and (107.83,242.92) .. (107.83,241.58) -- cycle ;
		%Shape: Circle [id:dp9550489597556435]
		\draw  [color={rgb, 255:red, 74; green, 144; blue, 226 }  ,draw opacity=1 ][fill={rgb, 255:red, 74; green, 144; blue, 226 }  ,fill opacity=1 ] (116.33,269.5) .. controls (116.33,267.94) and (117.6,266.67) .. (119.17,266.67) .. controls (120.73,266.67) and (122,267.94) .. (122,269.5) .. controls (122,271.06) and (120.73,272.33) .. (119.17,272.33) .. controls (117.6,272.33) and (116.33,271.06) .. (116.33,269.5) -- cycle ;
		
		% Text Node
		\draw (91.33,47.07) node [anchor=north west][inner sep=0.75pt]    {\footnotesize 1-soliton};
		% Text Node
		\draw (62.67,75.4) node [anchor=north west][inner sep=0.75pt]  [color={rgb, 255:red, 74; green, 144; blue, 226 }  ,opacity=1 ]  {\footnotesize$q_{0}$};
		% Text Node
		\draw (190.67,74.73) node [anchor=north west][inner sep=0.75pt]    {\footnotesize B\"acklund transformation};
		% Text Node
		\draw (436.67,52.73) node [anchor=north west][inner sep=0.75pt]    {\footnotesize $0$};
		% Text Node
		\draw (421.33,91.4) node [anchor=north west][inner sep=0.75pt]  [color={rgb, 255:red, 74; green, 144; blue, 226 }  ,opacity=1 ]  {\footnotesize $q_{0}^{( 1)}$};
		% Text Node
		\draw (424,149.73) node [anchor=north west][inner sep=0.75pt]    {\footnotesize time evolution};
		% Text Node
		\draw (162,10.4) node [anchor=north west][inner sep=0.75pt]    {\footnotesize $L^{2}$-neighborhood};
		% Text Node
		\draw (428,251.4) node [anchor=north west][inner sep=0.75pt]  [color={rgb, 255:red, 74; green, 144; blue, 226 }  ,opacity=1 ]  {\footnotesize $q^{( 1)}(t)$};
		% Text Node
		\draw (448,220.73) node [anchor=north west][inner sep=0.75pt]    {\footnotesize $0$};
		% Text Node
		\draw (184,244.07) node [anchor=north west][inner sep=0.75pt]    {\footnotesize B\"acklund transformation};
		% Text Node
		\draw (79.33,212.4) node [anchor=north west][inner sep=0.75pt]    {\footnotesize 1-soliton};
		% Text Node
		\draw (88.33,258.73) node [anchor=north west][inner sep=0.75pt]  [color={rgb, 255:red, 74; green, 144; blue, 226 }  ,opacity=1 ]  {\footnotesize $q(t)$};		
	\end{tikzpicture}
	\caption{\footnotesize The B\"acklund transformation approach for the stability of the 1-soliton within a small $L^2$-neighborhood}
	\label{result1}
\end{figure}

In the first step, at time $t=0$, if the initial data $q_0$ belongs to a sufficiently small $L^2$-neighborhood of a 1-soliton, we can obtain the eigenvalue and eigenvector of its corresponding $x$-part of the Lax pair. Using this eigenvector, we construct a B\"acklund transformation that transforms the initial data $q_0$ into $q_0^{(1)}$, which resides in a small $L^2$-neighborhood of the zero solution. Furthermore, this B\"acklund transformation is Lipschitz continuous in these two $L^2$-neighborhoods.
The second step involves time evolution. The key ingredient here is the $L^2$-conservation of the DNLS equation  \eqref{DNLS}, which ensures that the solution $q^{(1)}$ of \eqref{DNLS} under the initial data $q_0^{(1)}$ remains in this small $L^2$-neighborhood of the zero solution for all times $t\geq 0$.
In the final step, we use the associated solution of the Lax pair to construct another B\"acklund transformation that transforms $q^{(1)}$ back to the solution $q$ under the initial data $q_0$. We further prove that $q$ belongs to a small $L^2$-neighborhood of a 1-soliton, and the B\"acklund transformation is Lipschitz continuous in these two $L^2$-neighborhoods.

%In the first step, at time $t=0$, when the initial data $q_0$ belongs to a small enough $L^2$-neighborhood of a 1-soliton, we can obtain the eigenvalue and the eigenvector %of its corresponding $x$-part of Lax pair.  Then the  B\"acklund transformation constructed by this  eigenvector will transform the initial data  $q_0$ into  $q_0^{(1)}$ %in a small $L^2$-neighborhood of zero solution. Furthermore, this  B\"acklund transformation is Lipschitz continuous in these two $L^2$-neighborhoods. The second step is %time evolution. The  key ingredient of this step is the $L^2$-conservation of the DNLS equation  \eqref{DNLS} which ensure that the solution of  \eqref{DNLS} $q^{(1)}$ %under initial data  $q_0^{(1)}$ still in this small $L^2$-neighborhood of zero solution for all time $t\geq 0$. The last step, we use the associated solution of the Lax %pair  to construct  B\"acklund transformation and transform  $q^{(1)}$ back to the solution $q$ under the initial data  $q_0$. We further prove that  $q$ belong to a small %$L^2$-neighborhood of a 1-soliton, and the  B\"acklund transformation  is Lipschitz continuous in these two $L^2$-neighborhoods.

The rest of this paper is organized as follows. In Section \ref{sec pre}, we review the process of constructing the B\"acklund transformation.
Section \ref{sec 2} explains how the B\"acklund transformation turns the area around a 1-soliton into an area around the zero solution in $L^2(\mathbb{R})$ at time $t=0$.
In Section \ref{sec 3}, assuming the initial data are smooth enough, the B\"acklund transformation creates a smooth connection between the neighborhood of the zero solution and the neighborhood of a 1-soliton, linking solutions around 1-soliton and solutions around the zero solution at all times.
The proof of Theorem \ref{mainthm} is presented in Section \ref{sec: proof}.

\vskip 0.2cm
\noindent {\bf Notations.} Throughout this paper, the following notations  will be used.
%\label{subsec:notation}
\begin{itemize}
	\item[$\bullet$]The norm in the classical space $ L^p (\mathbb{R})$, $ 1 \leqslant p \leqslant \infty $,  is denoted  as $ \|\cdot \|_{p}$. A simple  extension is that $ \|\cdot \|_{p\cap q}=\|\cdot\|_p+\|\cdot\|_q$ is the norm of  $ L^p (\mathbb{R})\cap L^q (\mathbb{R})$. Specially, the inner product in  $ L^2 (\mathbb{R})$, is given by
	\begin{align*}
	\langle	f,g\rangle=\int_{\mathbb{R}}f(s)\bar{g}(s)\dif s.
	\end{align*}
	\item[$\bullet$]The  Sobolev space $W^{k,p}(\mathbb{R})$  is a Banach space
	equipped with the norm
	\begin{equation*}
		\Vert f \Vert_{W^{k,p}}=\sum_{j=0}^{k}\Vert \partial_x^{j}f \Vert_{p}.
	\end{equation*}
Specially, $H^{k}(\mathbb{R})=W^{k,2}(\mathbb{R})$ equipped with the norm
\begin{equation*}
	\Vert f \Vert_{H^{k}}=\sum_{j=0}^{k}\Vert \partial_x^{j}f \Vert_{2}.
\end{equation*}
	\item[$\bullet$]As usual, the classical Pauli matrices $\{\sigma_j\}_{j=1,2,3}$ are defined by
	\begin{equation}\label{def:PauliM}
		\sigma_1:=\begin{pmatrix}0 & 1 \\ 1 & 0\end{pmatrix}, \quad
		\sigma_2:=\begin{pmatrix}0 & -i \\ i & 0\end{pmatrix}, \quad
		\sigma_3:=\begin{pmatrix}1 & 0 \\ 0 & -1\end{pmatrix}.
	\end{equation}
	\item[$\bullet$] The letter $C$ will be used to denote universal positive constants which may vary from line to line. We write $a\lesssim b$ to denote the inequality $a\leqslant Cb$ for some constant $C>0$. To emphasize the implied constant to depend on some parameter $\alpha$,  it is  indicated  by $C(\alpha)$.
	\item[$\bullet$] If $A$ is a matrix, then $A_{ij}$ stands for its $(i,j)$-th entry.
\end{itemize}

\section{Preliminaries}\label{sec pre}
The DNLS equitation \eqref{DNLS} is completely integrable and    admits  the Lax pair \cite{KN1978},
\begin{align}
	&\Phi_x = X(q,z) \Phi,\label{lax0x}\\
	&\Phi_t =T(q,z) \Phi, \label{lax0t}
\end{align}
where
\begin{equation}
	X(q,z)=-iz^2 \sigma_3+U(q,z),\qquad 	T(q,z)=-2iz^4\sigma_3+V(q,z),\label{def:XT}
\end{equation}
with  $z\in \mathbb{C}$ being  a spectral parameter independent of $(t,x)$ and
\begin{align}
	&U(q,z)=z\left(\begin{array}{cc}
		0 & q  \\
		-\bar{q} & 0
	\end{array}\right),\label{def:U}\\
	&V(q,z)=2z^3\left(\begin{array}{cc}
		0 & q  \\
		-\bar{q} & 0
	\end{array}\right)+iz^2|q|^2\sigma_3+iz\left(\begin{array}{cc}
	0 & q_x  \\
	\bar{q}_x & 0
\end{array}\right)-z|q|^2\left(\begin{array}{cc}
0 & q  \\
-\bar{q} & 0
\end{array}\right).\label{def V}
\end{align}
The formal compatibility condition $\Phi_{xt}=\Phi_{tx}$ yields the DNLS equitation \eqref{DNLS}.

The B\"acklund transformation for equation \eqref{DNLS} was firstly introduced in \cite{K1999}. Here, we summarize the formula of the B\"acklund transformation as follows.
For a solution $q(t,x)$ of equation \eqref{DNLS} and a spectral parameter $z$ such that $\text{Im}z^2 \neq 0$, let $\Phi := \Phi(q,z) = (\Phi_1, \Phi_2)^T$ be a $C^2$ nonzero column vector solution of the Lax pair \eqref{lax0x}-\eqref{lax0t} associated with $q$ and $z$. Then, the B\"acklund transformation
$$(q,\Phi)\to (q^{(1)},\Phi^{(1)})$$
 with  $\Phi^{(1)}:=\Phi^{(1)}(q^{(1)},z)=(\Phi^{(1)}_1,\Phi^{(1)}_2)^T$ is given by
\begin{align}
	&q^{(1)}=\left(\frac{z|\Phi_2|^2+\bar{z}|\Phi_1|^2}{z|\Phi_1|^2+\bar{z}|\Phi_2|^2} \right)^2 \left[ -q+\frac{2i(z^2-\bar{z}^2)\Phi_1\bar{\Phi}_2}{z|\Phi_2|^2+\bar{z}|\Phi_1|^2}\right] ,\label{def:darb}\\ &\Phi^{(1)}_1=\frac{\bar{\Phi}_2}{z|\Phi_1|^2+\bar{z}|\Phi_2|^2},\qquad\Phi^{(1)}_2=\frac{\bar{\Phi}_1}{\bar{z}|\Phi_1|^2+z|\Phi_2|^2}.\label{def:darb2}
\end{align}
Under this transformation, $q^{(1)}$ becomes a new solution of equation \eqref{DNLS}, and $\Phi^{(1)}$ becomes a new $C^2$ nonzero column vector solution of the Lax pair \eqref{lax0x}-\eqref{lax0t} associated with $q^{(1)}$ and $z$.

A straightforward application of the B\"acklund transformation is to construct 1-soliton and multi-soliton solutions, as demonstrated in \cite{LQP2013, K1999, H2011}. By setting $q=0$ and choosing a spectral parameter $z_0$ such that $\text{Im}z_0^2 \neq0$, we obtain a nonzero column vector solution of the Lax pair \eqref{lax0x}-\eqref{lax0t} as follows
\begin{align*}
	\Phi_1(t,x)=e^{-iz_0^2x-2iz_0^4t},\qquad \Phi_2(t,x)=e^{iz_0^2x+2iz_0^4t}.
\end{align*}
Further using  the transformation \eqref{def:darb}, the 1-soliton solution $\psi^{z_0}(t,x)$ of  \eqref{DNLS} with $ q = 0 $ and spectrum
$z_0$ is given in \eqref{1sol}.
\begin{remark}
	Note that the 1-soliton $\psi^{\bar{z}_0}(t,x)$ defined in \eqref{1sol} for the DNLS equation \eqref{DNLS} under spectrum $\bar{z}_0$ satisfies $\psi^{\bar{z}_0}(t,x)=- \psi^{z_0}(t,x)$. Therefore, without loss of generality, we assume that $\text{Im}(z_0^2) > 0$ in the proof of stability.
\end{remark}

\section{From a perturbed 1-soliton solution to a small solution at $t = 0$}\label{sec 2}
When $t=0$, we denote $\psi^{z_0}(0,x)$ as $\psi^{z_0}_0(x)$ and denote $\text{Re}(z_0^2)=\xi_0$ and $\text{Im}(z_0^2)=\eta_0$. The spectral problem \eqref{lax0x} associated with $\psi^{z_0}_0(x)$ has a fundamental solution matrix in the form
\begin{align}
	\Phi^{z_0}=(\vec{\Phi}^{z_0}_1,\vec{\Phi}^{z_0}_2), \quad \vec{\Phi}^{z_0}_j=\left(\begin{array}{cc}
		\Phi^{z_0}_{1j}   \\
		\Phi^{z_0}_{2j}
	\end{array}\right),\quad j=1,2,\label{def:Phiz0}
\end{align}
with
\begin{align*}
	&\Phi^{z_0}_{11}=\frac{e^{-i\xi_0 x-\eta_0 x}}{z_0e^{2\eta_0 x}+\bar{z}_0e^{-2\eta_0 x}},\\
	&\Phi^{z_0}_{12}=\frac{e^{-i\xi_0 x}((4z_0^2\eta_0x+\xi_0)e^{-\eta_0 x}+|z_0|^2e^{3\eta_0 x})}{z_0e^{2\eta_0 x}+\bar{z}_0e^{-2\eta_0 x}}\\
	&\Phi^{z_0}_{21}=\frac{e^{i\xi_0 x+\eta_0 x}}{\bar{z}_0e^{2\eta_0 x}+z_0e^{-2\eta_0 x}},\\
	&\Phi^{z_0}_{22}=\frac{e^{i\xi_0 x}((-4z_0^2\eta_0x+\xi_0)e^{\eta_0 x}+|z_0|^2e^{-3\eta_0 x})}{\bar{z}_0e^{2\eta_0 x}+z_0e^{-2\eta_0 x}},
\end{align*}
where $\vec{\Phi}^{z_0}_1$ is also the eigenvector of the spectral problem \eqref{lax0x} associated with the 1-soliton $\psi^{z_0}_0$. The following lemma provides an estimate on the eigenvalue and eigenvector using perturbation theory about the spectral problem \eqref{lax0x}.
\begin{lemma}\label{lemma1}
	For a given spectral parameter $z_0$ with $\text{\rm Im}(z_0^2) > 0$, there exists a positive real constant $\varepsilon$ such that for any $q_0 \in L^2(\mathbb{R})$ satisfying
\begin{align*}
|q_0 - \psi^{z_0}_0|_2 \leq \varepsilon,
\end{align*}
there exists an eigenvalue $z_1$ corresponding to an eigenvector $\tilde{\Phi} := \tilde{\Phi}(q_0, z_1) \in H^1(\mathbb{R})$ for the following spectral problem,
 $$	\tilde{\Phi}_x = X(q_0,z_1) \tilde{\Phi}$$
  with
	\begin{align}
		|z_1-z_0|+\|\tilde{\Phi}-\vec{\Phi}^{z_0}_1\|_{H^1}\lesssim \|q_0-\psi^{z_0}_0\|_2.
	\end{align}
\end{lemma}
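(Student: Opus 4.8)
The plan is to recast the eigenvalue problem as a bifurcation (Lyapunov--Schmidt) problem centered at the known soliton data $(z_0,\vec{\Phi}^{z_0}_1)$ and to solve it by a contraction argument together with a single scalar solvability condition that pins down the perturbed eigenvalue $z_1$. Write $q_0=\psi^{z_0}_0+r$ with $\|r\|_2\le\varepsilon$, and look for $z_1=z_0+w$ and $\tilde\Phi=\vec\Phi^{z_0}_1+\phi$. Since $\partial_x\vec\Phi^{z_0}_1=X(\psi^{z_0}_0,z_0)\vec\Phi^{z_0}_1$, subtracting shows $\phi$ must solve $\mathcal{L}\phi=\Delta X\,(\vec\Phi^{z_0}_1+\phi)$, where $\mathcal{L}:=\partial_x-X(\psi^{z_0}_0,z_0)$ and $\Delta X:=X(q_0,z_0+w)-X(\psi^{z_0}_0,z_0)$ collects the dependence on $r$ (real-linear, through $U$) and on $w$ (holomorphic, through the $-iz^2\sigma_3$ and $zU$ terms). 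Eigenvectors are defined only up to scalar, so I would impose the gauge $\langle\phi,\vec\Phi^{z_0}_1\rangle=0$ and seek $\phi$ in a fixed complement of $\ker\mathcal L$.

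First I would establish the linear Fredholm framework and solve the range equation. The constant-coefficient part $\partial_x+iz_0^2\sigma_3$ has symbol determinant $-\kappa^2+z_0^4$, which never vanishes for real $\kappa$ because $\im z_0^2=\eta_0>0$; hence this operator is boundedly invertible $H^1\to L^2$, and since the soliton potential decays exponentially it is a relatively compact perturbation, so $\mathcal{L}\colon H^1\to L^2$ is Fredholm of index zero. The explicit fundamental matrix $\Phi^{z_0}$ shows that at $z_0$ exactly one solution, $\vec\Phi^{z_0}_1$, lies in $H^1$ while the second column grows, so $\ker\mathcal L=\mathrm{span}\{\vec\Phi^{z_0}_1\}$ and the cokernel is also one dimensional; let $\chi$ span $(\mathrm{ran}\,\mathcal L)^\perp=\ker\mathcal L^*$, let $\Pi$ be the $L^2$-projection onto $\mathrm{ran}\,\mathcal L$, and let $\mathcal L^{-1}\colon\mathrm{ran}\,\mathcal L\to H^1\cap(\ker\mathcal L)^\perp$ be the bounded inverse. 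The equation $\phi=\mathcal L^{-1}\Pi[\Delta X(\vec\Phi^{z_0}_1+\phi)]$ is then solved by contraction for small $(w,r)$: because $\vec\Phi^{z_0}_1\in H^1\hookrightarrow L^\infty$ and $\phi\in H^1\hookrightarrow L^\infty$, one has $\|r\,\vec\Phi^{z_0}_1\|_2\lesssim\|r\|_2$ and $\|r\,\phi\|_2\lesssim\|r\|_2\|\phi\|_{H^1}$, while the $w$-terms are controlled by $|w|$ times exponentially localized factors. This produces a unique fixed point $\phi=\phi(w,r)$, Lipschitz in $(w,r)$, with $\|\phi\|_{H^1}\lesssim\|r\|_2+|w|$.

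Next, substituting $\phi(w,r)$ into the solvability condition $\langle\Delta X(\vec\Phi^{z_0}_1+\phi),\chi\rangle=0$ gives a scalar equation $G(w,r)=0$. Expanding, $G(w,r)=c\,w+\ell(r)+(\text{higher order})$, where $\ell$ is real-linear in $r$ with $|\ell(r)|\lesssim\|r\|_2$ and $c=\langle\partial_zX(\psi^{z_0}_0,z_0)\vec\Phi^{z_0}_1,\chi\rangle$. Because the dependence of $X$ on $z$ is holomorphic, the derivative of $G$ with respect to $w\in\mathbb{C}\cong\mathbb{R}^2$ is multiplication by the complex number $c$, so the real Jacobian is invertible precisely when $c\neq0$. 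The implicit function theorem then yields a unique small $w=w(r)$ with $|w|\lesssim\|r\|_2$, whence $z_1=z_0+w$ and $\tilde\Phi=\vec\Phi^{z_0}_1+\phi(w(r),r)\in H^1$ satisfy $|z_1-z_0|+\|\tilde\Phi-\vec\Phi^{z_0}_1\|_{H^1}\lesssim\|q_0-\psi^{z_0}_0\|_2$.

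I expect two main obstacles. The first is verifying the nondegeneracy $c\neq0$, i.e.\ that $z_0$ is an algebraically simple eigenvalue; this encodes the simplicity of the corresponding zero of the transmission coefficient and should be checkable directly from the explicit entries of $\Phi^{z_0}$, or from the symmetry structure of the Kaup--Newell operator. The second is carrying out the $L^2$ multiplication estimates uniformly in $z$ near $z_0$: since $r$ is only assumed in $L^2$ (with no $L^1$ decay), one must exploit the exponential weights coming from $\im z_0^2>0$, and the extra factor $z$ multiplying $U$ together with the additional symmetries of the DNLS Lax pair make this step more delicate than in the NLS case treated in \cite{PNLS}.
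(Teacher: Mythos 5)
Your proposal is correct and follows essentially the same route as the paper's proof: a Lyapunov--Schmidt reduction around $(z_0,\vec{\Phi}^{z_0}_1)$, with the linearized operator shown to be Fredholm of index zero as a compact perturbation of the invertible constant-coefficient operator, kernel and cokernel read off the explicit fundamental matrix, the gauge $\langle\phi,\vec{\Phi}^{z_0}_1\rangle=0$, the range equation solved by contraction, and the bifurcation equation solved by the implicit function theorem with the estimate $|z_1-z_0|\lesssim\|q_0-\psi^{z_0}_0\|_2$ closing the argument. The only differences are cosmetic --- the paper works with $\mathcal{L}(z_0)=\sigma_3\partial_x+iz_0^2-z_0\sigma_3 U(\psi^{z_0}_0,1)$, i.e.\ the $\sigma_3$-multiplied form of your $\partial_x-X$, and identifies your abstract cokernel element $\chi$ explicitly as $\sigma_1\overline{\vec{\Phi}^{z_0}_1}$ --- while the nondegeneracy constant $c$ you flag as an obstacle is precisely the quantity $D_{z-z_0}B(0,0)$ that the paper likewise asserts to be nonzero without further computation.
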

\begin{proof}
	We rewrite the   spectral problem  \eqref{lax0x}  associated with  $q_0$ as
	\begin{align}
		\left (\sigma_3\partial_x+iz^2-z\left(\begin{array}{cc}
			0 & q_0  \\
			\bar{q}_0 & 0
		\end{array}\right) \right) \tilde{\Phi}=0\label{equ:xlax q0}.
	\end{align}
	Define  a operator relying on parameter $z$ with
		\begin{align}
		\mathcal{L}(z)=\sigma_3\partial_x+iz^2-z\left(\begin{array}{cc}
			0 & \psi^{z}_0  \\
			\bar{\psi}^{z}_0 & 0
		\end{array}\right).\label{Lz}
	\end{align}
	Then
%	\begin{align*}
	%	\mathcal{L}(z_0)=\sigma_3\partial_x+iz_0^2-z_0\left(\begin{array}{cc}
	%		0 & \psi^{z_0}_0  \\
	%		\bar{\psi}^{z_0}_0 & 0
%		\end{array}\right).
%	\end{align*}
 equation \eqref{equ:xlax q0} can be written as a perturbation equation by
	\begin{align}
		\mathcal{L}(z_0)\tilde{\Phi} =\tilde{\Delta}\tilde{\Phi},\label{equ: q0}
	\end{align}
	where
	\begin{align}
		\tilde{\Delta}=-i(z_0-z)(z_0+z)+z_0\left(\begin{array}{cc}
			0 & \psi^{z_0}_0  \\
			\bar{\psi}^{z_0}_0 & 0
		\end{array}\right)-z\left(\begin{array}{cc}
			0 & q_0  \\
			\bar{q}_0 & 0
		\end{array}\right)\label{def:Delta}.
	\end{align}
	To solve the perturbation equation \eqref{equ: q0}, we utilize the method of Lyapunov-Schmidt reductions. The initial step involves analyzing the operator $\mathcal{L}(z_0)$, which is a closed operator on $L^2(\mathbb{R})$ with domain $H^1(\mathbb{R})$.  Because $\vec{\Phi}^{z_0}_2$ grows  exponentially as $x\to\pm\infty$, it appears that Ker$\mathcal{L}(z_0)=$ span$\{\vec{\Phi}^{z_0}_1\}$. Note that the operator $\sigma_3\partial_x+iz_0^2$ is a invertible bounded operator from $H^1(\mathbb{R})$ to $L^2(\mathbb{R})$ with inverse
	\begin{align*}
		(\sigma_3\partial_x+iz_0^2)^{-1}\left(\begin{array}{cc}
			f_1 \\
			f_2
		\end{array}\right)=\left(\begin{array}{cc}
		\int^x_{+\infty}e^{-iz_0^2(x-s)}f_1(s)ds   \\
		 -\int^x_{+\infty}e^{iz_0^2(x-s)}f_2(s)ds
	\end{array}\right) .
	\end{align*}
	 A simple calculation  leads to
	 \begin{align*}
	 	z_0\left(\begin{array}{cc}
	 		0 & \psi^{z_0}_0  \\
	 		\bar{\psi}^{z_0}_0 & 0
	 	\end{array}\right)(\sigma_3\partial_x+iz_0^2)^{-1}\left(\begin{array}{cc}
	 	f_1 \\
	 	f_2
 	\end{array}\right)=z_0\left(\begin{array}{cc}
 -\psi^{z_0}_0(x)\int^x_{+\infty}e^{iz_0^2(x-s)}f_2(s)ds \\
 	\bar{\psi}^{z_0}_0(x)\int^x_{+\infty}e^{-iz_0^2(x-s)}f_1(s)ds
 \end{array}\right).
	 \end{align*}
	Then, $z_0\left(\begin{array}{cc}
		0 & \psi^{z_0}_0  \\
		\bar{\psi}^{z_0}_0 & 0
	\end{array}\right)(\sigma_3\partial_x+iz_0^2)^{-1}$  is a  Hilbert-Schmidt  operator on $L^2(\mathbb{R})$. Therefore,
\begin{align*}
	\mathcal{L}(z_0)(\sigma_3\partial_x+iz_0^2)^{-1}=I-z_0\left(\begin{array}{cc}
		0 & \psi^{z_0}_0  \\
		\bar{\psi}^{z_0}_0 & 0
	\end{array}\right)(\sigma_3\partial_x+iz_0^2)^{-1}
\end{align*}
 is a Fredholm  operator with index zero, and so does $\mathcal{L}(z_0)$. Invoking of Ker$\mathcal{L}(z_0)=$ span$\{\vec{\Phi}^{z_0}_1\}$ and  Ker$\mathcal{L}(z_0)^*=$  span$\{\sigma_1\overline{\vec{\Phi}^{z_0}_1}\}$, it follows that  Ran$\mathcal{L}(z_0)=$ span$\{\sigma_1\overline{\vec{\Phi}^{z_0}_1}\}^\perp$. Thus, we conclude that $\mathcal{L}(z_0)$ is a linear homeomorphism from
	$H^1(\mathbb{R})\cap$ span$\{\vec{\Phi}^{z_0}_1\}^\perp$ to $L^2(\mathbb{R})\cap$ span$\{\sigma_1\overline{\vec{\Phi}^{z_0}_1}\}^\perp$.
Hence one can divide $\tilde{\Phi}$ to two part in span$\{\vec{\Phi}^{z_0}_1\}^\perp$ and span$\{\vec{\Phi}^{z_0}_1\}$ respectively. Since our aim is to estimate $\tilde{\Phi}-\vec{\Phi}^{z_0}_1$, we add a normalization condition
	\begin{align}
		\langle \tilde{\Phi}-\vec{\Phi}^{z_0}_1,\vec{\Phi}^{z_0}_1\rangle=0.\label{normcondition}
	\end{align}
	We will show $	\langle \tilde{\Phi},\vec{\Phi}^{z_0}_1\rangle\neq0$ latter.
	Under this condition, we make the following decomposition:
	\begin{align*}
		\tilde{\Phi}=\Phi_s+\vec{\Phi}^{z_0}_1,
	\end{align*}
	such that $\Phi_s\perp \vec{\Phi}^{z_0}_1$. Then the perturbation equation \eqref{equ: q0} is  equivalent to
	\begin{align}
		\mathcal{L}(z_0)\Phi_s =\tilde{\Delta}(\Phi_s+\vec{\Phi}^{z_0}_1).\label{equ: q02}
	\end{align}
	Define
	\begin{align}
		F(\Phi_s,q_0-\psi^{z_0}_0,z-z_0)=\mathcal{L}(z_0)\Phi_s -\tilde{\Delta}(\Phi_s+\vec{\Phi}^{z_0}_1),
	\end{align}
	which is a $C^1$ map on $H^1(\mathbb{R})\times L^2(\mathbb{R})\times \mathbb{C}$ to $ L^2(\mathbb{R})$ near zero with
	\begin{align*}
		F(\Phi_s,0,0)=\mathcal{L}(z_0)\Phi_s,\qquad D_{\Phi_s}F(0,0,0)=\mathcal{L}(z_0).
	\end{align*}
	Where $D$ means the Frechet derivative. Define  a projection operator
	\begin{align*}
		Qf=\frac{\langle f,\sigma_1\overline{\vec{\Phi}^{z_0}_1} \rangle}{\|\sigma_1\overline{\vec{\Phi}^{z_0}_1}\|_2^2 }\sigma_1\overline{\vec{\Phi}^{z_0}_1}\ :\ L^2(\mathbb{R})\to \text{span}\{\sigma_1\overline{\vec{\Phi}^{z_0}_1}\}.
	\end{align*}
	Applying the operators $Q$
	and $I-Q$ to the original equation \eqref{equ: q02}, one obtains the equivalent system
	\begin{align}
		(I-Q)F(\Phi_s,q_0-\psi^{z_0}_0,z-z_0)=0,\qquad QF(\Phi_s,q_0-\psi^{z_0}_0,z-z_0)=0.\label{equ:phis}
	\end{align}
	For the first equation, noting  that $(I-Q)\mathcal{L}(z_0)=\mathcal{L}(z_0)$, then it is  equivalent to
	\begin{align*}
		\mathcal{L}(z_0)\Phi_s =(I-Q)\tilde{\Delta}(\Phi_s+\vec{\Phi}^{z_0}_1).
	\end{align*}
	Via the fact that Ran$(I-Q)$=Ran$\mathcal{L}(z_0)$, the above equation is  equivalent to
	\begin{align*}
		\Phi_s =\mathcal{L}(z_0)^{-1}(I-Q)\tilde{\Delta}\Phi_s+\mathcal{L}(z_0)^{-1}(I-Q)\tilde{\Delta}\vec{\Phi}^{z_0}_1.
	\end{align*}
	The operator $\mathcal{L}(z_0)^{-1}(I-Q)\tilde{\Delta}$ is bounded linear operator on $H^1(\mathbb{R})\cap$span$\{\vec{\Phi}^{z_0}_1\}^\perp$ with
	\begin{align*}
		\|\mathcal{L}(z_0)^{-1}(I-Q)\tilde{\Delta}\|\leq C(z_0)\|\tilde{\Delta}\|_{H^1(\mathbb{R})\cap\text{span}\{\vec{\Phi}^{z_0}_1\}^\perp\to L^2}\leq C(z_0) (	|z-z_0|+ \|q_0-\psi^{z_0}_0\|_2).
	\end{align*}
	Therefore, there exists a small enough $\varepsilon_0>0$ such that when $|z-z_0|+ \|q_0-\psi^{z_0}_0\|_2<\varepsilon_0$,  the operator $I-\mathcal{L}(z_0)^{-1}(I-Q)\tilde{\Delta}$ is invertible. Then  $\Phi_s$ exists uniquely with
	\begin{align*}
		\Phi_s =(I-\mathcal{L}(z_0)^{-1}(I-Q)\tilde{\Delta})^{-1}\mathcal{L}(z_0)^{-1}(I-Q)\tilde{\Delta}\vec{\Phi}^{z_0}_1,
	\end{align*}
	with
	\begin{align}
		\|\Phi_s\|_{H^1}\leq  C(z_0) (	|z-z_0|+ \|q_0-\psi^{z_0}_0\|_2).\label{est:Phis}
	\end{align}
	It also implies  the reasonability of the normalization condition in \eqref{normcondition}. Indeed, if $	\langle \tilde{\Phi},\vec{\Phi}^{z_0}_1\rangle=0$, then it is found that
	\begin{align*}
		\tilde{\Phi} =	\mathcal{L}(z_0)^{-1}(I-Q)\tilde{\Delta}\tilde{\Phi},
	\end{align*}
	namely,
	\begin{align*}
		(I-	\mathcal{L}(z_0)^{-1}(I-Q)\tilde{\Delta})\tilde{\Phi}=0.
	\end{align*}
	When $\|q_0-\psi^{z_0}_0\|_2$ is small enough, we find that $\tilde{\Phi}=0$, which gives a contradiction.
	
	The next step is to solve the bifurcation equation, namely the second equation in \eqref{equ:phis}.
	The second equation in \eqref{equ:phis} is equivalent to the orthogonality condition
	\begin{align}
		B(q_0-\psi^{z_0}_0,z-z_0):=\langle \tilde{\Delta}(\Phi_s+\vec{\Phi}^{z_0}_1),\sigma_1\overline{\vec{\Phi}^{z_0}_1}\rangle=0.\label{equ:Bif}
	\end{align}
	Note that
	\begin{align*}
		D_{z-z_0}B(0,0)=\langle 2iz_0\vec{\Phi}^{z_0}_1-\left(\begin{array}{cc}
			0 & \psi^{z_0}_0  \\
			\bar{\psi}^{z_0}_0 & 0
		\end{array}\right)\vec{\Phi}^{z_0}_1,\sigma_1\overline{\vec{\Phi}^{z_0}_1}\rangle\neq0.
	\end{align*}
	Then  by the implicit function theorem, there  exists a small positive $\varepsilon_1$ and $\delta_1<1$ such that when $\|q_0-\psi^{z_0}_0\|_2\leq \varepsilon_1$, there exists unique $z_1$ with $|z_1-z_0|\leq \delta_1$ such that  $B(q_0-\psi^{z_0}_0,z_1-z_0)=0$.
	Moreover, \eqref{equ:Bif} is equivalent to
	\begin{align*}
		&(z-z_0)	D_{z-z_0}B(0,0)+(z-z_0)^2i\langle \vec{\Phi}^{z_0}_1,\sigma_1\overline{\vec{\Phi}^{z_0}_1}\rangle\\
		=&-\langle z\left(\begin{array}{cc}
			0 & q_0-\psi^{z_0}_0  \\
			\bar{q}_0-\bar{\psi}^{z_0}_0 & 0
		\end{array}\right)(\Phi_s+\vec{\Phi}^{z_0}_1),\sigma_1\overline{\vec{\Phi}^{z_0}_1}\rangle\\
		&-(z-z_0)\langle 2iz_0\Phi_s-\left(\begin{array}{cc}
			0 & \psi^{z_0}_0  \\
			\bar{\psi}^{z_0}_0 & 0
		\end{array}\right)\Phi_s,\sigma_1\overline{\vec{\Phi}^{z_0}_1}\rangle-(z-z_0)^2i\langle \Phi_s,\sigma_1\overline{\vec{\Phi}^{z_0}_1}\rangle
	\end{align*}
	  Via substituting the estimate \eqref{est:Phis} into above equation, it is adduced that
	\begin{align*}
		|z_1-z_0|\leq C(z_0) \|q_0-\psi^{z_0}_0\|_2|z_1-z_0|+ \|q_0-\psi^{z_0}_0\|_2,
	\end{align*}
	which implies that when $\|q_0-\psi^{z_0}_0\|_2$ is sufficiently small
	\begin{align*}
		|z_1-z_0|\leq C(z_0) \|q_0-\psi^{z_0}_0\|_2.
	\end{align*}
	Combining the estimate \eqref{est:Phis} and taking $\varepsilon=\min\{\varepsilon_0,\varepsilon_1\}$,   the proof of Lemma \ref{lemma1} is complete.
\end{proof}
With  the operator   \eqref{Lz},  the  spectral problem (\ref{lax0x})  associated with $q_0$ under spectral $z_1$ can be rewrite as
\begin{align}
	\mathcal{L}(z_1)\Phi=\Delta\Phi,\qquad \Delta=z_1\left(\begin{array}{cc}
		0 & \psi^{z_1}_0-q_0  \\
		\bar{\psi}^{z_1}_0-\bar{q}_0 & 0
	\end{array}\right).\label{Phi}
\end{align}
As shown in Lemma \ref{lemma1}, it has an eigenvalue $z_1$. For convenience, denote Im$z_1^2=\eta$, Re$z_1^2=\xi$. Since $	|z_1-z_0|\lesssim\|q_0-\psi^{z_0}_0\|_2$, when $\|q_0-\psi^{z_0}_0\|_2$ is small enough, it follows that $\eta>0$.  We also denote $\Phi^{z_1}:=\Phi^{z_1}(\psi^{z_1}_0,z_1)=(\vec{\Phi}^{z_1}_1,\vec{\Phi}^{z_1}_2)$ as the fundamental solution  matrix of the equation
\begin{align}
	\mathcal{L}(z_1)\Phi^{z_1}=0\label{equ:qici},
\end{align}
 where
 \begin{align}
 	\vec{\Phi}^{z_1}_j=(\Phi^{z_1}_{1j},\Phi^{z_1}_{2j})^T, \ j=1,2,
 \end{align}
  with
\begin{align}
	\begin{aligned}
			&\Phi^{z_1}_{11}=\frac{e^{-i\xi  x-\eta  x}}{z_1e^{2\eta  x}+\bar{z}_1 e^{-2\eta  x}},\\
		&\Phi^{z_1}_{12}=\frac{e^{-i\xi  x}((4z_1^2\eta x+\xi )e^{-\eta  x}+|z_1|^2e^{3\eta  x})}{z_1e^{2\eta  x}+\bar{z}_1 e^{-2\eta  x}}\\
		&\Phi^{z_1}_{21}=\frac{e^{i\xi  x+\eta  x}}{\bar{z}_1 e^{2\eta  x}+z_1e^{-2\eta  x}},\\
		&\Phi^{z_1}_{22}=\frac{e^{i\xi  x}((-4z_1^2\eta x+\xi )e^{\eta  x}+|z_1|^2e^{-3\eta  x})}{\bar{z}_1 e^{2\eta  x}+z_1e^{-2\eta  x}}.
	\end{aligned}\label{Phiz1}
\end{align}
This enables  us to consider the eigenvector $\Phi:=\Phi(q_0,z_1)$ with a new normalization condition
\begin{align}
	\langle \Phi-\vec{\Phi}^{z_1}_1,\vec{\Phi}^{z_1}_1\rangle=0.\label{normPhi}
\end{align}
Indeed, there must be a constant $C$ such that $C\langle \Phi,\vec{\Phi}^{z_1}_1\rangle=\langle \vec{\Phi}^{z_1}_1,\vec{\Phi}^{z_1}_1\rangle$, otherwise $\langle \Phi,\vec{\Phi}^{z_1}_1\rangle=0$. Then  \eqref{Phi} also leads to that	$\Phi=	\mathcal{L}(z_1)^{-1}\Delta\Phi,$ which implies $\Phi=0$.
The following corollary is obtained by process analogous steps to Lemma \ref{lemma1}.
\begin{corollary}
	Under the conditions in Lemma \ref{lemma1}, let  $\Phi:=\Phi(q_0,z_1)$ be the eigenvector of  the spectral problem (\ref{lax0x}) associated with $q_0$  satisfying the normalization condition  \eqref{normPhi}. Then
	\begin{align*}
		\|\Phi-\vec{\Phi}^{z_1}_1\|_{H^1}\lesssim \|q_0-\psi^{z_1}_0\|_2.
	\end{align*}
\end{corollary}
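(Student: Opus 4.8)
The plan is to repeat the Lyapunov--Schmidt scheme of Lemma \ref{lemma1}, but now with the fixed eigenvalue $z_1$ in place of $z_0$, so that the operator under study is $\mathcal{L}(z_1)$ from \eqref{Lz} and the perturbed spectral problem is exactly \eqref{Phi}. The decisive simplification compared with Lemma \ref{lemma1} is that $z_1$ has already been determined there as the eigenvalue, so here there is no spectral parameter left to solve for: only the contraction/fixed-point part of the argument survives, while the bifurcation equation is satisfied automatically.

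First I would establish that $\mathcal{L}(z_1)$ has exactly the same Fredholm structure as $\mathcal{L}(z_0)$. Since $\eta=\text{Im}z_1^2>0$ (already noted after Lemma \ref{lemma1}), the operator $\sigma_3\partial_x+iz_1^2$ is an invertible bounded map $H^1(\mathbb{R})\to L^2(\mathbb{R})$ with the same explicit inverse as in the proof of Lemma \ref{lemma1} (replacing $z_0$ by $z_1$), and the zeroth-order potential part of $\mathcal{L}(z_1)$ composed with $(\sigma_3\partial_x+iz_1^2)^{-1}$ is Hilbert--Schmidt. Hence $\mathcal{L}(z_1)$ is Fredholm of index zero, and from the explicit fundamental solutions \eqref{Phiz1} one reads off Ker$\mathcal{L}(z_1)=$ span$\{\vec{\Phi}^{z_1}_1\}$ (because $\vec{\Phi}^{z_1}_2$ grows exponentially) and Ker$\mathcal{L}(z_1)^*=$ span$\{\sigma_1\overline{\vec{\Phi}^{z_1}_1}\}$, so that $\mathcal{L}(z_1)$ restricts to a linear homeomorphism from $H^1(\mathbb{R})\cap$ span$\{\vec{\Phi}^{z_1}_1\}^\perp$ onto $L^2(\mathbb{R})\cap$ span$\{\sigma_1\overline{\vec{\Phi}^{z_1}_1}\}^\perp$.

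Next I would use the normalization \eqref{normPhi} to split $\Phi=\Phi_s+\vec{\Phi}^{z_1}_1$ with $\Phi_s\perp\vec{\Phi}^{z_1}_1$, so that \eqref{Phi} becomes $\mathcal{L}(z_1)\Phi_s=\Delta(\Phi_s+\vec{\Phi}^{z_1}_1)$. With the projection
\[ Qf=\frac{\langle f,\sigma_1\overline{\vec{\Phi}^{z_1}_1}\rangle}{\|\sigma_1\overline{\vec{\Phi}^{z_1}_1}\|_2^2}\,\sigma_1\overline{\vec{\Phi}^{z_1}_1}, \]
applying $I-Q$ and using $(I-Q)\mathcal{L}(z_1)=\mathcal{L}(z_1)$ together with Ran$(I-Q)=$ Ran$\mathcal{L}(z_1)$ turns the equation into the fixed-point relation
\[ \Phi_s=\big(I-\mathcal{L}(z_1)^{-1}(I-Q)\Delta\big)^{-1}\mathcal{L}(z_1)^{-1}(I-Q)\Delta\,\vec{\Phi}^{z_1}_1. \]
Because $\Delta$ is multiplication by the potential difference, one has the operator bound $\|\Delta\|_{H^1\cap\,\text{span}\{\vec{\Phi}^{z_1}_1\}^\perp\to L^2}\lesssim\|q_0-\psi^{z_1}_0\|_2$, with no $|z-z_1|$ contribution now that $z_1$ is frozen; hence for $\|q_0-\psi^{z_1}_0\|_2$ small the Neumann series converges, the inverse exists, and the estimate $\|\Phi_s\|_{H^1}\lesssim\|q_0-\psi^{z_1}_0\|_2$ follows, which is the claim since $\Phi_s=\Phi-\vec{\Phi}^{z_1}_1$.

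The one point needing care --- and the only place where the argument genuinely differs from Lemma \ref{lemma1} --- is the $Q$-component, that is, the solvability condition $\langle\Delta(\Phi_s+\vec{\Phi}^{z_1}_1),\sigma_1\overline{\vec{\Phi}^{z_1}_1}\rangle=0$. Here it holds for free: since $z_1$ is genuinely an eigenvalue, $\Phi=\Phi_s+\vec{\Phi}^{z_1}_1$ solves $\mathcal{L}(z_1)\Phi=\Delta\Phi$ exactly, and pairing this identity with $\sigma_1\overline{\vec{\Phi}^{z_1}_1}\in$ Ker$\mathcal{L}(z_1)^*$ gives $\langle\Delta\Phi,\sigma_1\overline{\vec{\Phi}^{z_1}_1}\rangle=\langle\mathcal{L}(z_1)\Phi,\sigma_1\overline{\vec{\Phi}^{z_1}_1}\rangle=0$. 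Thus no implicit-function/bifurcation step is required, and the smallness needed throughout is guaranteed by the hypotheses of Lemma \ref{lemma1}, since $|z_1-z_0|\lesssim\|q_0-\psi^{z_0}_0\|_2$ together with the smooth dependence of $\psi^{z}_0$ on $z$ through \eqref{1sol} yields $\|q_0-\psi^{z_1}_0\|_2\lesssim\|q_0-\psi^{z_0}_0\|_2$.
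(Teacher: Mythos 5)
Your proposal is correct and is essentially the argument the paper intends: the paper offers no separate proof, stating only that the corollary follows by steps analogous to Lemma \ref{lemma1}, and you carry out exactly that Lyapunov--Schmidt scheme for $\mathcal{L}(z_1)$, rightly noting that freezing the spectral parameter at $z_1$ eliminates the bifurcation equation, whose solvability condition $\langle\Delta\Phi,\sigma_1\overline{\vec{\Phi}^{z_1}_1}\rangle=0$ holds automatically since $z_1$ is already the eigenvalue and $\sigma_1\overline{\vec{\Phi}^{z_1}_1}\in\mathrm{Ker}\,\mathcal{L}(z_1)^*$. Your fixed-point bound is precisely the analogue of \eqref{est:Phis} with $z=z_1$, so the route coincides with the paper's.
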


Now  we shall estimate the errors between $q_0$ and the 1-soliton $\psi^{z_1}_0$.
To develop this analysis, we first prove several technical results.
 The following proposition gives the explicit column-vector
solution $w\in H^1(\mathbb{R})\cap$ span$\{\vec{\Phi}^{z_1}_1\}^\perp$ for the  inhomogeneous linear equation
\begin{align}
	\mathcal{L}(z_1)w=h\label{equ:feiqici}
\end{align}
for every column-vector function $h\in L^2(\mathbb{R})\cap$ span$\{\sigma_1\overline{\vec{\Phi}^{z_1}_1}\}^\perp$.
\begin{Proposition}\label{Prop: 2.1}
	For any $h=(h_1,h_2)^T\in L^2(\mathbb{R})\cap$ span$\{\sigma_1\overline{\vec{\Phi}^{z_1}_1}\}^\perp$,  the  inhomogeneous linear equation \eqref{equ:feiqici} has a unique solution $	w=(w_1,w_2)^T\in H^1(\mathbb{R})\cap$ span$\{\vec{\Phi}^{z_1}_1\}^\perp$ given by
	\begin{align}
		w(x)=c(h)\vec{\Phi}^{z_1}_1 +\Phi^{z_1}(x)\int^x(\Phi^{z_1}(s))^{-1}\sigma_3h(s)\dif s\label{def:int w}
	\end{align}
where $c(h)$   is a continuous linear functional on $L^2(\mathbb{R})$ with
		\begin{align*}
		c(h)=-\frac{1}{\|\vec{\Phi}^{z_1}_1\|_2^2}\langle \, \Phi^{z_1}(x)\int^x(\Phi^{z_1}(s))^{-1}\sigma_3h(s)\dif s, \; \vec{\Phi}^{z_1}_1 \,\rangle.
	\end{align*}
\end{Proposition}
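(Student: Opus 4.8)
The plan is to solve \eqref{equ:feiqici} by variation of parameters and then extract the decay from the solvability hypothesis on $h$. Multiplying $\mathcal{L}(z_1)w=h$ on the left by $\sigma_3$ and using $\sigma_3^2=I$ turns \eqref{equ:feiqici} into the first-order linear system $w_x = A(x)w + \sigma_3 h$, where $A(x)=-\sigma_3\big(iz_1^2-z_1\begin{pmatrix}0&\psi_0^{z_1}\\ \bar{\psi}_0^{z_1}&0\end{pmatrix}\big)$, and $\Phi^{z_1}$ is a fundamental matrix of the homogeneous system $\Phi^{z_1}_x=A\Phi^{z_1}$ by \eqref{equ:qici}. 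Since $\mathrm{tr}\,A\equiv 0$, the Wronskian $W:=\det\Phi^{z_1}$ is a nonzero constant, so $\Phi^{z_1}(x)$ is invertible for all $x$ and the general solution is $w=\Phi^{z_1}\big(C+\int^x(\Phi^{z_1})^{-1}\sigma_3 h\,\dif s\big)$. This is precisely the stated formula once the free constant vector $C$ is fixed: its $\vec{\Phi}^{z_1}_2$-direction must vanish (otherwise $w$ inherits the exponential growth of $\vec{\Phi}^{z_1}_2$ and leaves $L^2$), while its $\vec{\Phi}^{z_1}_1$-direction is the scalar $c(h)$.

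The crux is to show that the particular integral lands in $L^2$, and here the orthogonality of $h$ enters decisively. Writing $(\Phi^{z_1})^{-1}\sigma_3 h=(g_1,g_2)^T$ and using $(\Phi^{z_1})^{-1}=W^{-1}\begin{pmatrix}\Phi^{z_1}_{22}&-\Phi^{z_1}_{12}\\ -\Phi^{z_1}_{21}&\Phi^{z_1}_{11}\end{pmatrix}$, one computes $g_2=W^{-1}(-\Phi^{z_1}_{21}h_1-\Phi^{z_1}_{11}h_2)$, whence, since $\sigma_1\overline{\vec{\Phi}^{z_1}_1}=(\overline{\Phi^{z_1}_{21}},\overline{\Phi^{z_1}_{11}})^T$,
\[
\int_{\mathbb{R}} g_2\,\dif x=-W^{-1}\langle h,\sigma_1\overline{\vec{\Phi}^{z_1}_1}\rangle=0.
\]
Thus the coefficient $G_2(x)=\int_{+\infty}^x g_2\,\dif s$ of the growing solution $\vec{\Phi}^{z_1}_2$ tends to $0$ at $+\infty$ trivially and at $-\infty$ by the vanishing of the full integral, which is exactly the cancellation of the secular term. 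Matching the explicit exponential rates in \eqref{Phiz1} — $\vec{\Phi}^{z_1}_1$ decaying and $\vec{\Phi}^{z_1}_2$ growing like $e^{\pm\eta x}$, with the reciprocal rates appearing in $(\Phi^{z_1})^{-1}$ — the map $h\mapsto\Phi^{z_1}(x)\int^x(\Phi^{z_1})^{-1}\sigma_3 h\,\dif s$ is realized, with the integration limits chosen end-by-end, as an integral operator whose kernel is dominated by $e^{-\eta|x-s|}$. Its $L^2$-boundedness then follows from Schur's test (equivalently Young's inequality), giving $w\in L^2$ with $\|w\|_2\lesssim\|h\|_2$; since $\mathcal{L}(z_1)w=h\in L^2$ forces $\sigma_3 w_x=h-iz_1^2 w+z_1\begin{pmatrix}0&\psi_0^{z_1}\\ \bar{\psi}_0^{z_1}&0\end{pmatrix}w\in L^2$, in fact $w\in H^1$.

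It remains to fix the constant and argue uniqueness. Choosing $c(h)=-\|\vec{\Phi}^{z_1}_1\|_2^{-2}\langle\Phi^{z_1}\int^x(\Phi^{z_1})^{-1}\sigma_3 h\,\dif s,\vec{\Phi}^{z_1}_1\rangle$ enforces the normalization $w\perp\vec{\Phi}^{z_1}_1$; since $\vec{\Phi}^{z_1}_1\in L^2$ and the particular integral depends linearly and boundedly on $h$, the functional $c(\cdot)$ is linear and continuous on $L^2(\mathbb{R})$. For uniqueness, if $w,\tilde{w}\in H^1(\mathbb{R})\cap\text{span}\{\vec{\Phi}^{z_1}_1\}^\perp$ both solve \eqref{equ:feiqici}, then $w-\tilde{w}\in\mathrm{Ker}\,\mathcal{L}(z_1)\cap H^1=\text{span}\{\vec{\Phi}^{z_1}_1\}$ (the $z_1$-analogue of the kernel computation in Lemma \ref{lemma1}) and is orthogonal to $\vec{\Phi}^{z_1}_1$, hence $w=\tilde{w}$. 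The single genuine obstacle is the middle step: verifying that the explicit integral defines a bounded map into $L^2$ and exploiting the identity $\int_{\mathbb{R}}g_2\,\dif x=0$ to suppress the growth of $\vec{\Phi}^{z_1}_2$ at both ends; everything else is variation of parameters together with the Fredholm structure already established at $z_0$.
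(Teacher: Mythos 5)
Your proposal is correct, and its skeleton --- variation of parameters with the fundamental matrix $\Phi^{z_1}$, suppressing the $\vec{\Phi}^{z_1}_2$-direction of the free constant, fixing $c(h)$ by the normalization $w\perp\vec{\Phi}^{z_1}_1$, upgrading $L^2$ to $H^1$ through the equation, and uniqueness from $\mathrm{Ker}\,\mathcal{L}(z_1)\cap H^1(\mathbb{R})=\mathrm{span}\{\vec{\Phi}^{z_1}_1\}$ --- is the same as the paper's. Where you genuinely diverge is at the decisive estimate, and your version is the more careful one. The paper expands the particular integral through the four primitives $\Upsilon_{ij}$ of \eqref{def:psi1}, chooses the basepoints term by term ($x_{21}=+\infty$, $x_{22}=-\infty$), and claims each product $\Phi^{z_1}_{ij}\Upsilon_{kl}$ separately lies in $L^2(\mathbb{R})$; notably, the hypothesis $h\perp\sigma_1\overline{\vec{\Phi}^{z_1}_1}$ is never invoked in its proof. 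As literally written this fails for the cross terms: with $x_{21}=+\infty$, $\Upsilon_{21}(x)\to\int_{\mathbb{R}}\Phi^{z_1}_{21}h_1\,\dif s$ as $x\to-\infty$, a generically nonzero constant, while $\Phi^{z_1}_{22}$ grows like $e^{\eta|x|}$ there, so $\Phi^{z_1}_{22}\Upsilon_{21}\notin L^2$ in general; symmetrically $\Phi^{z_1}_{12}\Upsilon_{22}$ blows up at $+\infty$ with $x_{22}=-\infty$, and the paper's remark that $x_{21}$ ``can take $\pm\infty$ either'' fails for the same reason. What actually matters is the combined coefficient $\Upsilon_{21}+\Upsilon_{22}$ of the growing column, taken with a \emph{coherent} basepoint, and its limit at the far end is precisely a multiple of $\langle h,\sigma_1\overline{\vec{\Phi}^{z_1}_1}\rangle$. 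Your identity $\int_{\mathbb{R}}g_2\,\dif x=-W^{-1}\langle h,\sigma_1\overline{\vec{\Phi}^{z_1}_1}\rangle=0$, with the single basepoint $+\infty$ for $G_2$, is exactly the cancellation the orthogonality hypothesis exists to provide: it makes the coefficient of $\vec{\Phi}^{z_1}_2$ vanish at convolution rate at both ends, after which Schur's test closes the $L^2$ bound, and the formula \eqref{def:int w} is recovered with the second-row integrals anchored consistently. So your route both uses the hypothesis where the paper's argument silently needs it and repairs the paper's basepoint bookkeeping; the proposition itself is unaffected, since changing basepoints only shifts $w$ by multiples of the columns, and the correct multiple of $\vec{\Phi}^{z_1}_2$ is the one you select. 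One cosmetic point: because of the linear-in-$x$ factors in $\Phi^{z_1}_{12}$, $\Phi^{z_1}_{22}$ in \eqref{Phiz1}, the kernel of your integral operator is dominated by $e^{-\eta'|x-s|}$ for any $0<\eta'<\eta$ rather than by $e^{-\eta|x-s|}$ on the nose; this changes nothing in the Schur/Young step.
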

\begin{proof}
	Combining the equations \eqref{equ:qici} and \eqref{equ:feiqici}, it is readily shown that
	\begin{align}
		((\Phi^{z_1})^{-1}w)_x=(\Phi^{z_1})^{-1}\sigma_3h,
	\end{align}
	which yields
	\begin{align}
		w(x)=c_1\vec{\Phi}^{z_1}_1+c_2\vec{\Phi}^{z_1}_2+\Phi^{z_1}(x)\int^x(\Phi^{z_1}(s))^{-1}\sigma_3h(s)\dif s,
	\end{align}
where the constants $c_1$ and $ c_2$ and the starting point of integration are undetermined. To ensure $w\in H^1(\mathbb{R})$, it is natural to take $c_2=0$.
	For convenience, denote
	\begin{align}
		&\Upsilon_{11}(x)=\int_{x_{11}}^x\Phi^{z_1}_{22}(s)h_1(s)\dif s,\quad \Upsilon_{12}(x)=\int_{x_{12}}^x\Phi^{z_1}_{12}(s)h_2(s)\dif s,\label{def:psi1}\\
		&\Upsilon_{21}(x)=-\int_{x_{21}}^x\Phi^{z_1}_{21}(s)h_1(s)\dif s,\quad \Upsilon_{22}(x)=-\int_{x_{22}}^x\Phi^{z_1}_{11}(s)h_2(s)\dif s.
	\end{align}
	It then follows that
	\begin{align*}
		\Phi^{z_1}(x)\int^x(\Phi^{z_1}(s))^{-1}\sigma_3h(s)\dif s=\left(\begin{array}{cc}
			\Phi^{z_1}_{11}(x)(\Upsilon_{11}(x)+\Upsilon_{12}(x)) + 	\Phi^{z_1}_{12}(x)(\Upsilon_{21}(x)+\Upsilon_{22}(x))  \\
			\Phi^{z_1}_{21}(x)(\Upsilon_{11}(x)+\Upsilon_{12}(x)) + 	\Phi^{z_1}_{22}(x)(\Upsilon_{21}(x)+\Upsilon_{22}(x))
		\end{array}\right).
	\end{align*}
	In the following, we determine the constants $x_{ij}$, $i,j=1,2$, such that each term above belongs to $L^2(\mathbb{R})$.  Since $\eta>0$, the following estimates hold
	\begin{align*}
		|\Phi^{z_1}_{11}|,\ 	|\Phi^{z_1}_{21}|\lesssim e^{-\eta |x|},\quad 	|\Phi^{z_1}_{12}|\lesssim e^{\eta x}(\chi_+(x)+x\chi_-(x)),\quad |\Phi^{z_1}_{22}|\lesssim e^{-\eta x}(x\chi_+(x)+\chi_-(x)),
	\end{align*}
	where $\chi_\pm$ are the characteristic functions on $\mathbb{R}^\pm$ respectively. It is then inferred that
	\begin{align*}
		\|\Phi^{z_1}_{11}\Upsilon_{11}\|_2 \lesssim \|e^{-\eta |x|}\int_{x_{11}}^x\chi_+(s)se^{-\eta s}h_1(s)\dif s\|_2+\|e^{-\eta |x|}\int_{x_{11}}^x\chi_-(s)e^{-\eta s}h_1(s)\dif s\|_2.
	\end{align*}
	For the first term on the right-hand side of the above inequality, it is immediately ascertained that
	\begin{align*}
		\|e^{-\eta |x|}\int_{x_{11}}^x\chi_+(s)se^{-\eta s}h_1(s)\dif s\|_2\lesssim \|e^{-\eta |x|}\|_2\|x\chi_+(x)e^{-\eta x}\|_2\|h_1\|_2.
	\end{align*}
	To ensure that  the second  term is  finite, we take $x_{11}=+\infty$. Therefore, applying Minkowski's inequality yields that
	\begin{align*}
		\|e^{-\eta |x|}\int_{x_{11}}^x\chi_-(s)e^{-\eta s}h_1(s)\dif s\|_2&=\left(\int^0_{-\infty}\big|\int^0_xe^{\eta s}h_1(s-x)\dif s \big|^2\right) ^{1/2}\\
		&\lesssim \|h_1\|_2.
	\end{align*}
	Similarly, $\|\Phi^{z_1}_{21}\Upsilon_{11}\|_2 \lesssim \|h_1\|_2$. And  taking  $x_{11}=-\infty$, it is thereby inferred that $\|\Phi^{z_1}_{11}\Upsilon_{12}\|_2,\ \|\Phi^{z_1}_{21}\Upsilon_{12}\|_2 \lesssim \|h_2\|_2.$ For $\Phi^{z_1}_{12}\Upsilon_{21}$, we find that $x_{21}$ can take $\pm\infty$ either. Without loss of generality, we take $x_{21}=+\infty$. Thus, it is found that
	\begin{align*}
		\|\Phi^{z_1}_{12}\Upsilon_{21}\|_2&\lesssim\|\chi_+(x)e^{\eta x}\int^x_{+\infty}e^{-\eta |s|}h_1(s)\dif s\|_2+\|\chi_-(x)e^{\eta x}x\int^x_{+\infty}e^{-\eta |s|}h_1(s)\dif s\|_2\\
		&\lesssim\|h_1\|_2.
	\end{align*}
	Similarly, $\|\Phi^{z_1}_{22}\Upsilon_{21}\|_2 \lesssim \|h_1\|_2$. And taking  $x_{22}=-\infty$, it is thereby inferred that $\|\Phi^{z_1}_{12}\Upsilon_{22}\|_2,\ \|\Phi^{z_1}_{22}\Upsilon_{22}\|_2 \lesssim \|h_2\|_2.$ This thus implies that $c_1$ is uniquely determined by  the orthogonality condition $\langle \vec{\Phi}^{z_1}_1, w\rangle=0$,
	namely,
	\begin{align*}
		 c_1=-\frac{1}{\|\vec{\Phi}^{z_1}_1\|_2^2}\langle\Phi^{z_1}(x)\int^x(\Phi^{z_1}(s))^{-1}\sigma_3h(s)\dif s,\vec{\Phi}^{z_1}_1\rangle,
	\end{align*}
	Since  all other terms in \eqref{def:int w} are in $L^2(\mathbb{R})$, it follows that $c_1(h)$ is a continuous linear
	functional on $L^2(\mathbb{R})$. This completes the  proof of Proposition \ref{Prop: 2.1}.
\end{proof}

We now introduce two Banach spaces of column vector functions $\mathrm{A}=\mathrm{A}_1\times\mathrm{A}_2$ and $\mathrm{B}=\mathrm{B}_1\times\mathrm{B}_2$ equipped with the norm
\begin{align}
	\|(w_1,w_2)^T\|_{\mathrm{A}}=\|w_1\|_{\mathrm{A}_1}+\|w_2\|_{\mathrm{A}_2},\qquad 	\|(h_1,h_2)^T\|_{\mathrm{B}}=\|h_1\|_{\mathrm{B}_1}+\|h_2\|_{\mathrm{B}_2},\label{def:spaceAB}
\end{align}
where  the exact expression of $\vec{\Phi}^{z_1}_1$ is given in \eqref{Phiz1} and
\begin{align*}
	&\|w_1\|_{\mathrm{A}_1}=\inf_{f_1+g_1=w_1}\left(\|(\Phi^{z_1}_{11})^{-1}f_1\|_\infty+ \|(\Phi^{z_1}_{21})^{-1}g_1\|_{\infty\cap2}\right) ,\\
	&\|w_2\|_{\mathrm{A}_2}=\inf_{f_2+g_2=w_2}\left(\|(\Phi^{z_1}_{11})^{-1}f_2\|_{\infty\cap2}+ \|(\Phi^{z_1}_{21})^{-1}g_2\|_\infty \right) ,\\
	&\|h_1\|_{\mathrm{B}_1}=\inf_{f_1+g_1=h_1}\left(\|(\Phi^{z_1}_{11})^{-1}f_1\|_{1\cap2}+ \|(\Phi^{z_1}_{21})^{-1}g_1\|_{2}\right),\\
	&\|h_2\|_{\mathrm{B}_2}=\inf_{f_2+g_2=h_2}\left(\|(\Phi^{z_1}_{11})^{-1}f_2\|_{2}+ \|(\Phi^{z_1}_{21})^{-1}g_2\|_{1\cap2}\right).
\end{align*}
By virtue of the integral expression of $w$ in Proposition \ref{Prop: 2.1}, we give the estimate of the norm of  $w$ in the following proposition.
\begin{Proposition}\label{Prop: 2.2}
	Let $h=(h_1,h_2)^T\in L^2(\mathbb{R})\cap$ span$\{\sigma_1\overline{\vec{\Phi}^{z_1}_1}\}^\perp$, and $w$ be the solution of  the  inhomogeneous linear equation \eqref{equ:feiqici} given in Proposition \ref{Prop: 2.1}. Then it appears that
	\begin{align*}
		\|w\|_{\mathrm{A}}\lesssim \|h\|_{\mathrm{B}}.
	\end{align*}
\end{Proposition}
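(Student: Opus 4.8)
The plan is to work directly from the explicit representation of $w$ obtained in Proposition \ref{Prop: 2.1}, bounding each constituent product $\Phi^{z_1}_{ij}\Upsilon_{kl}$ in the appropriate component of the $\mathrm{A}$-norm. First I would expand the integral term of \eqref{def:int w} into its two scalar components,
\begin{align*}
	w_1&=c(h)\Phi^{z_1}_{11}+\Phi^{z_1}_{11}(\Upsilon_{11}+\Upsilon_{12})+\Phi^{z_1}_{12}(\Upsilon_{21}+\Upsilon_{22}),\\
	w_2&=c(h)\Phi^{z_1}_{21}+\Phi^{z_1}_{21}(\Upsilon_{11}+\Upsilon_{12})+\Phi^{z_1}_{22}(\Upsilon_{21}+\Upsilon_{22}),
\end{align*}
and feed in the near-optimal decompositions supplied by the $\mathrm{B}$-norm infimum, writing $h_1=\Phi^{z_1}_{11}a_1+\Phi^{z_1}_{21}b_1$ and $h_2=\Phi^{z_1}_{11}a_2+\Phi^{z_1}_{21}b_2$ with $a_1\in L^{1\cap2}$, $b_1\in L^2$, $a_2\in L^2$, $b_2\in L^{1\cap2}$ and $\|a_1\|_{1\cap2}+\|b_1\|_2\lesssim\|h_1\|_{\mathrm{B}_1}$, and likewise for $h_2$.

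The core of the argument is then a finite list of weighted integral estimates. The idea is to assign each product to the $f$-slot (weight $\Phi^{z_1}_{11}$, which forces rapid decay at $+\infty$) or the $g$-slot (weight $\Phi^{z_1}_{21}$, rapid decay at $-\infty$) of the relevant $\mathrm{A}$-component, according to the decay direction dictated by the endpoint $x_{ij}\in\{\pm\infty\}$ fixed in Proposition \ref{Prop: 2.1}. Using the pointwise bounds $|\Phi^{z_1}_{11}|,|\Phi^{z_1}_{21}|\lesssim e^{-\eta|x|}$ together with $|\Phi^{z_1}_{12}|\lesssim e^{\eta x}(\chi_++|x|\chi_-)$ and $|\Phi^{z_1}_{22}|\lesssim e^{-\eta x}(|x|\chi_++\chi_-)$ established there, every product collapses, after cancelling the growing and decaying exponentials, to an operator of the schematic form $e^{-\eta|x|}\int_{\pm\infty}^x K(s)u(s)\dif s$ with $K(s)\lesssim e^{-\eta|s|}$ (a harmless polynomial factor being absorbed by the exponential gap). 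Such operators are controlled by Young's and Minkowski's integral inequalities, e.g.
\begin{align*}
	\Big\|e^{-\eta|x|}\int_{\pm\infty}^x e^{-\eta|s|}u(s)\dif s\Big\|_{\infty\cap2}\lesssim\|u\|_{1\cap2},\qquad \Big\|e^{-\eta|x|}\int_{\pm\infty}^x e^{-\eta|s|}u(s)\dif s\Big\|_{\infty\cap2}\lesssim\|u\|_2,
\end{align*}
which is precisely why the mixed norms $L^{1\cap2}$ and $L^{\infty\cap2}$ were built into the definitions \eqref{def:spaceAB}: they make the input index match the output index across the integral kernel. Summing the resulting bounds over the eight products and over the chosen decomposition yields $\big\|\Phi^{z_1}(x)\int^x(\Phi^{z_1})^{-1}\sigma_3h\,\dif s\big\|_{\mathrm{A}}\lesssim\|h\|_{\mathrm{B}}$.

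It then remains to dispose of the leading term $c(h)\vec{\Phi}^{z_1}_1$. Here $\vec{\Phi}^{z_1}_1\in\mathrm{A}$ with a universal norm bound: choosing $f_1=\Phi^{z_1}_{11}$, $g_1=0$ in $\mathrm{A}_1$ and $f_2=0$, $g_2=\Phi^{z_1}_{21}$ in $\mathrm{A}_2$ makes both weights equal to the constant $1$, so $\|\vec{\Phi}^{z_1}_1\|_{\mathrm{A}}\lesssim1$. Since Proposition \ref{Prop: 2.1} bounds $|c(h)|$ by the $L^2$-norm of the integral term, which is dominated by its $\mathrm{A}$-norm (one checks $\mathrm{A}\hookrightarrow L^2$ directly from $|\Phi^{z_1}_{11}|,|\Phi^{z_1}_{21}|\lesssim e^{-\eta|x|}\in L^2$) and hence by the estimate just established, this term also contributes $\lesssim\|h\|_{\mathrm{B}}$, completing the bound.

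I expect the principal obstacle to be organizational rather than analytic: the bookkeeping of which product lands in which slot. The elementary inequalities close only if each of the eight terms $\Phi^{z_1}_{ij}\Upsilon_{kl}$ is paired with the correct component of $h$ and the correct weight $\Phi^{z_1}_{11}$ or $\Phi^{z_1}_{21}$; a mismatched pairing produces a non-integrable exponential. The delicate points are the borderline products involving the growing solutions $\Phi^{z_1}_{12},\Phi^{z_1}_{22}$, where the polynomial factors $|x|$ must be absorbed into the exponential gap and where the endpoint choice $x_{ij}=\pm\infty$ is forced rather than free; verifying that each such assignment is consistent with the mixed-norm indices is the one place demanding genuine care.
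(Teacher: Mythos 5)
Your proposal is correct and follows essentially the same route as the paper's proof: the paper likewise works from the explicit formula \eqref{def:int w}, bounds the integral term product-by-product (treating $\Phi^{z_1}_{11}\Upsilon_{11}$ and $\Phi^{z_1}_{12}\Upsilon_{21}$ as representatives of the eight), assigns each product to the slot whose weight cancels the growing exponential, and closes with H\"older/Minkowski estimates on convolution-type kernels such as $e^{-2\eta(s-x)}\chi_{\{s>x\}}$, exactly the mechanism you describe. The only cosmetic difference is your handling of the term $c(h)\vec{\Phi}^{z_1}_1$ via the embedding $\mathrm{A}\hookrightarrow L^2$, where the paper argues more directly that $|c(h)|\lesssim\|h\|_2\lesssim\|h\|_{\mathrm{B}}$, the last inequality because $(\Phi^{z_1}_{11})^{-1}$ and $(\Phi^{z_1}_{21})^{-1}$ are bounded below.
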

\begin{proof}
	A direct fact is that for any $f\in L^{\infty}(\mathbb{R} )$,
	\begin{align*}
		&	\|\Phi^{z_1}_{11}f\|_{\mathrm{A}_1}=\inf_{f_1+g_1=f}\left(\|f_1\|_\infty+ \|e^{-2\eta  (\cdot)}g_1\|_{\infty\cap2}\right)\leq \|f\|_\infty,\\
		&	\|\Phi^{z_1}_{21}f\|_{\mathrm{A}_2}=\inf_{f_2+g_2=f}\left(\|e^{2\eta  (\cdot)}f_2\|_\infty+ \|g_2\|_{\infty\cap2}\right)\leq \|f\|_\infty.
	\end{align*}
	And for any $f\in L^{2}(\mathbb{R})$, since $(\Phi^{z_1}_{11})^{-1}$ and $(\Phi^{z_1}_{21})^{-1}$ have nonzero infimum, it then follows that
	\begin{align*}
		\|f\|_2\lesssim \|f\|_{\mathrm{B}_j},\ j=1,2.
	\end{align*}
	Therefore, it is accomplished that
	\begin{align}
		\|c(h)\vec{\Phi}^{z_1}_1 \|_{\mathrm{A}}\leq |c(h)|\lesssim \|h\|_2\lesssim\|h\|_{\mathrm{B}}.\label{est 1}
	\end{align}
	For the second term in \eqref{def:int w}, we take $\Phi^{z_1}_{11}(x)\Upsilon_{11}(x)$ and $	\Phi^{z_1}_{12}(x)\Upsilon_{21}(x) $ as  examples to give estimates.  The others can be obtain analogously.
	Firstly,
	\begin{align*}
		\|\Phi^{z_1}_{11}\Upsilon_{11}\|_{\mathrm{A}_1}\leq \|\Upsilon_{11}\|_\infty.
	\end{align*}
	Then for any $f_1+g_1=h_1$, from the exact expression in \eqref{def:psi1}, it is found that
	\begin{align*}
		\|\Upsilon_{11}\|_\infty = &	\big\|\int^x_{+\infty}\frac{e^{i\xi  s}((-4z_1^2\eta s+\xi )e^{\eta  x}+|z_1|^2e^{-3\eta  s})}{\bar{z}_1e^{2\eta  s}+z_1e^{-2\eta  s}}h_1(s)\dif s\|_\infty\\
		=&	\big\|\int^x_{+\infty}\frac{e^{i\xi  s}((-4z_1^2\eta s+\xi )+|z_1|^2e^{-4\eta  s})}{|\bar{z}_1e^{2\eta  s}+z_1e^{-2\eta  s}|^{2}}\left( (\bar{z}_1e^{-2\eta  s}+z_1e^{2\eta  s})e^{\eta  s}f_1(s)\right) \dif s\\
		&+\int^x_{+\infty}\frac{e^{i\xi  s}((-4z_1^2\eta s+\xi )e^{2\eta  x}+|z_1|^2e^{-2\eta  s})}{(\bar{z}_1e^{2\eta  s}+z_1e^{-2\eta  s})^{2}}\left( (\bar{z}_1e^{2\eta  s}+z_1e^{-2\eta  s})e^{-\eta  s}g_1(s)\right) \dif s\|_\infty\\
		&\lesssim \|h_1\|_{\mathrm{B}_1}.
	\end{align*}
The estimate in the last step is obtained from the fact
\begin{align*}
	&\frac{e^{i\xi  s}((-4z_1^2\eta s+\xi )+|z_1|^2e^{-4\eta  s})}{|\bar{z}_1e^{2\eta  s}+z_1e^{-2\eta  s}|^{2}}\in C_0^\infty(\mathbb{R}),\\
	&\frac{e^{i\xi  s}((-4z_1^2\eta s+\xi )e^{2\eta  x}+|z_1|^2e^{-2\eta  s})}{(\bar{z}_1e^{2\eta  s}+z_1e^{-2\eta  s})^{2}}\in\mathcal{S}(\mathbb{R}).
\end{align*}
	Next, for the term $\Phi^{z_1}_{12}\Upsilon_{21}$, it is found  that
	\begin{align*}
		\|\Phi^{z_1}_{12}\Upsilon_{21}\|_{\mathrm{A}_1}\lesssim E_1+E_2,
	\end{align*}
	where
	\begin{align*}
		E_1=\|\chi_+(x)e^{\eta  x}\int_x^{+\infty}e^{-\eta |s|}|h_1(s)|\dif s\|_{\mathrm{A}_1},\quad E_2=\chi_-(x)xe^{\eta  x}\int_x^{+\infty}e^{-\eta |s|}|h_1(s)|\dif s\|_{\mathrm{A}_1}.
	\end{align*}
	For any $f_1+g_1=|h_1|$, in view of  the fact that for $0<x<s$, $$\frac{\bar{z}_1e^{-2\eta  x}+z_1e^{2\eta  x}}{\bar{z}_1e^{-2\eta  s}+z_1e^{2\eta  s}}\lesssim e^{2\eta  (s-x)},$$ it is adduced that
	\begin{align*}
		E_1&\leq \|\chi_+(x)(\bar{z}_1e^{-2\eta  x}+z_1e^{2\eta  x})\int_x^{+\infty}e^{-\eta |s|}|h_1(s)|\dif s\|_{2\cap\infty}\\
		=&\big \|\chi_+(x)\int_x^{+\infty}\frac{\bar{z}_1e^{-2\eta  x}+z_1e^{2\eta  x}}{\bar{z}_1e^{-2\eta  s}+z_1e^{2\eta  s}}e^{-2\eta s} \left( (\bar{z}_1e^{-2\eta  s}+z_1e^{2\eta  s})e^{\eta s}f_1\right) \dif s\\
		&+\chi_+(x)\int_x^{+\infty}\frac{\bar{z}_1e^{-2\eta  x}+z_1e^{2\eta  x}}{\bar{z}_1e^{-2\eta  s}+z_1e^{2\eta  s}} \left( (\bar{z}_1e^{-2\eta  s}+z_1e^{2\eta  s})e^{-\eta s}g_1\right) \dif s\big\|_{2\cap\infty}\\
		&\lesssim 	\big \|\chi_+(x)\int_x^{+\infty}e^{-2\eta  x}\left( (\bar{z}_1e^{-2\eta  s}+z_1e^{2\eta  s})e^{\eta s}f_1\right) \dif s\big\|_{2\cap\infty}\\
		&+	\big \|\chi_+(x)\int_x^{+\infty}e^{2\eta  (s-x)} \left( (\bar{z}_1e^{-2\eta  s}+z_1e^{2\eta  s})e^{-\eta s}g_1\right) \dif s\big\|_{2\cap\infty}.
	\end{align*}
Applying  the H\"older  and Minkovski  inequalities yields the estimates in the following,
	\begin{align*}
		&\big \|\chi_+(x)\int_x^{+\infty}e^{-2\eta  x}\left( (\bar{z}_1e^{-2\eta  s}+z_1e^{2\eta  s})e^{\eta s}f_1\right) \dif s\big\|_{\infty}\\
		&\leq \big \|\chi_+(x)\int_x^{+\infty}e^{-2\eta  s}\left( (\bar{z}_1e^{-2\eta  s}+z_1e^{2\eta  s})e^{\eta s}f_1\right) \dif s\big\|_{\infty}\\
		&\lesssim\| (\bar{z}_1e^{-2\eta  s}+z_1e^{2\eta  s})e^{\eta s}f_1\|_2,\\
		&\big \|\chi_+(x)e^{-2\eta  x}\int_x^{+\infty}\left( (\bar{z}_1e^{-2\eta  s}+z_1e^{2\eta  s})e^{\eta s}f_1\right) \dif s\big\|_{2}\lesssim \| (\bar{z}_1e^{-2\eta  s}+z_1e^{2\eta  s})e^{\eta s}f_1\|_1, \\
		&\big \|\chi_+(x)\int_x^{+\infty}e^{2\eta  (s-x)} \left( (\bar{z}_1e^{-2\eta  s}+z_1e^{2\eta  s})e^{-\eta s}g_1\right) \dif s\big\|_{2}\lesssim\|  (\bar{z}_1e^{-2\eta  s}+z_1e^{2\eta  s})e^{-\eta s}g_1\|_2.
	\end{align*}
	Thus we conclude that $E_1\lesssim \|h_1\|_{\mathrm{B}_1}$. An similar estimate shows $E_2\lesssim \|h_1\|_{\mathrm{B}_1}$. Together with \eqref{est 1}, we obtain desired result as indicated above.
\end{proof}
The following proposition gives the estimates of error between $\Phi$ and $\vec{\Phi}^{z_1}_1$.
\begin{Proposition}\label{Prop 2.5}
	There exists a small $\epsilon$ such that when $$ \|q_0-\psi^{z_1}_0\|_2\leq \epsilon,$$ the column-vector function $\Phi_s=\Phi-\vec{\Phi}^{z_1}_1$  can be written in the following form
	\begin{align*}
		\Phi_s=\left(\begin{array}{cc}
			\Phi_{11}^{z_1}E_{11}-\bar{\Phi}_{21}^{z_1}E_{12} \\
			\bar{\Phi}_{11}^{z_1}E_{21}-\Phi_{21}^{z_1}E_{22}
		\end{array}\right)
	\end{align*}
satisfying
	\begin{align}
		\|E_{11}\|_\infty+\|E_{12}\|_{2\cap\infty}+	\|E_{22}\|_\infty+\|E_{21}\|_{2\cap\infty}\lesssim \|q_0-\psi^{z_1}_0\|_2.\label{est E}
	\end{align}
	In addition,  if $q_0\in H^k(\mathbb{R})$, then for $j=1,...,k$,
	\begin{align}
		\|\partial_x^jE_{11}\|_\infty+\|\partial_x^jE_{12}\|_{2\cap\infty}+	\|\partial_x^jE_{22}\|_\infty+\|\partial_x^jE_{21}\|_{2\cap\infty}<\infty.\label{est E2}
	\end{align}
\end{Proposition}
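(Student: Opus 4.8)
The plan is to recognize $\Phi_s=\Phi-\vec{\Phi}^{z_1}_1$ as the solution of an inhomogeneous problem and to upgrade its known $H^1$-bound to a bound in the weighted space $\mathrm{A}$, from which the asserted decomposition and \eqref{est E} can simply be read off. Subtracting $\mathcal{L}(z_1)\vec{\Phi}^{z_1}_1=0$ (see \eqref{equ:qici}) from \eqref{Phi}, the perturbation satisfies
\begin{align*}
	\mathcal{L}(z_1)\Phi_s=\Delta(\Phi_s+\vec{\Phi}^{z_1}_1)=:h,
\end{align*}
and since $\mathcal{L}(z_1)$ maps $H^1(\mathbb{R})$ into its range $\operatorname{span}\{\sigma_1\overline{\vec{\Phi}^{z_1}_1}\}^\perp$, the forcing $h$ automatically lies in $L^2(\mathbb{R})\cap\operatorname{span}\{\sigma_1\overline{\vec{\Phi}^{z_1}_1}\}^\perp$, so Proposition \ref{Prop: 2.1} yields the integral representation \eqref{def:int w} for $\Phi_s$. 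The corollary following Lemma \ref{lemma1} already gives $\Phi_s\in H^1(\mathbb{R})$ with $\|\Phi_s\|_{H^1}\lesssim\|q_0-\psi^{z_1}_0\|_2$, but this is too weak for \eqref{est E}, because membership in $\mathrm{A}$ encodes exponential decay that $H^1$ does not provide. The heart of the proof is therefore to obtain the estimate in the $\mathrm{A}$-norm.

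The key technical ingredient is a weighted mapping estimate for the off-diagonal potential $\Delta$ from \eqref{Phi}, whose entries $z_1(\psi^{z_1}_0-q_0)$ lie in $L^2(\mathbb{R})$: multiplication by $\Delta$ sends $\mathrm{A}$ into $\mathrm{B}$ with small norm, and $\Delta\vec{\Phi}^{z_1}_1\in\mathrm{B}$, with
\begin{align*}
	\|\Delta w\|_{\mathrm{B}}\lesssim\|q_0-\psi^{z_1}_0\|_2\,\|w\|_{\mathrm{A}},\qquad \|\Delta\vec{\Phi}^{z_1}_1\|_{\mathrm{B}}\lesssim\|q_0-\psi^{z_1}_0\|_2.
\end{align*}
These follow from Hölder's inequality once the exponential weights in \eqref{def:spaceAB} are matched: writing an $\mathrm{A}_2$-factor as $f_2+g_2$ with $(\Phi^{z_1}_{11})^{-1}f_2\in L^\infty\cap L^2$ and $(\Phi^{z_1}_{21})^{-1}g_2\in L^\infty$, the product with the $L^2$-potential has $(\Phi^{z_1}_{11})^{-1}$- and $(\Phi^{z_1}_{21})^{-1}$-weighted pieces in $L^1\cap L^2$ and $L^2$, exactly the norms of $\mathrm{B}_1$; for $\Delta\vec{\Phi}^{z_1}_1$ the first component $z_1(\psi^{z_1}_0-q_0)\Phi^{z_1}_{21}$ divided by $\Phi^{z_1}_{21}$ returns the $L^2$-potential. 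Combined with the solution operator $\mathcal{L}(z_1)^{-1}\colon\mathrm{B}\to\mathrm{A}$ bounded by Proposition \ref{Prop: 2.2} (and, exactly as in the Lyapunov–Schmidt reduction of Lemma \ref{lemma1}, the component along $\sigma_1\overline{\vec{\Phi}^{z_1}_1}$ being annihilated by the eigenvalue condition that fixed $z_1$), the map $w\mapsto\mathcal{L}(z_1)^{-1}\Delta(w+\vec{\Phi}^{z_1}_1)$ is a contraction on $\mathrm{A}$ for $\epsilon$ small, with a unique fixed point of $\mathrm{A}$-norm $\lesssim\|q_0-\psi^{z_1}_0\|_2$. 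Any $\mathrm{A}$-element solving the $x$-equation lies in $H^1(\mathbb{R})$ (its derivative is recovered algebraically from $\partial_x\Phi=X(q_0,z_1)\Phi$), so by the uniqueness in Proposition \ref{Prop: 2.1} this fixed point is $\Phi_s$, giving $\|\Phi_s\|_{\mathrm{A}}\lesssim\|q_0-\psi^{z_1}_0\|_2$.

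To reach the stated form, note that by \eqref{def:spaceAB} the first component of $\Phi_s$ splits as $f_1+g_1$ with $(\Phi^{z_1}_{11})^{-1}f_1\in L^\infty$ and $(\Phi^{z_1}_{21})^{-1}g_1\in L^\infty\cap L^2$, and the second as $f_2+g_2$ with $(\Phi^{z_1}_{11})^{-1}f_2\in L^\infty\cap L^2$ and $(\Phi^{z_1}_{21})^{-1}g_2\in L^\infty$ (choosing near-minimizing splittings). Setting $E_{11}=(\Phi^{z_1}_{11})^{-1}f_1$, $E_{12}=-(\bar{\Phi}^{z_1}_{21})^{-1}g_1$, $E_{21}=(\bar{\Phi}^{z_1}_{11})^{-1}f_2$ and $E_{22}=-(\Phi^{z_1}_{21})^{-1}g_2$ reproduces the claimed decomposition, and since $|\bar{\Phi}^{z_1}_{21}|=|\Phi^{z_1}_{21}|$ and $|\bar{\Phi}^{z_1}_{11}|=|\Phi^{z_1}_{11}|$ from \eqref{Phiz1}, the prefactors are unimodular and the four norms in \eqref{est E} equal those of the $\mathrm{A}$-splitting, hence $\lesssim\|\Phi_s\|_{\mathrm{A}}\lesssim\|q_0-\psi^{z_1}_0\|_2$. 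For the regularity statement \eqref{est E2} I would bootstrap on the first-order system $\partial_x\Phi=X(q_0,z_1)\Phi$: because $X$ depends on $q_0$ without derivatives, $\partial_x^{j}\Phi$ involves only $\partial_x^{\le j-1}q_0$, which for $q_0\in H^k(\mathbb{R})$ and $j\le k$ belong to $H^{\ge1}(\mathbb{R})\hookrightarrow L^\infty(\mathbb{R})$ and decay at infinity; since $\vec{\Phi}^{z_1}_1$ is explicit and smooth and the weights in \eqref{Phiz1} have bounded logarithmic derivatives, an induction on $j$ keeps every $\partial_x^jE_{ij}$ finite in the norms of \eqref{est E2}. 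I expect the weighted mapping estimate $\Delta\colon\mathrm{A}\to\mathrm{B}$, together with the identification of the $\mathrm{A}$-fixed point with $\Phi_s$, to be the main obstacle, since that is where the exponential bookkeeping between the two weighted spaces must be executed, mirroring the computations already carried out in the proof of Proposition \ref{Prop: 2.2}.
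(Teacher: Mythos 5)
Your proposal is correct and follows essentially the same route as the paper's own proof: both reduce to $\mathcal{L}(z_1)\Phi_s=\Delta(\Phi_s+\vec{\Phi}^{z_1}_1)$ with the right-hand side automatically in $L^2(\mathbb{R})\cap\,$span$\{\sigma_1\overline{\vec{\Phi}^{z_1}_1}\}^\perp$, establish the H\"older multiplication bounds $\|\Delta w\|_{\mathrm{B}}\lesssim\|q_0-\psi^{z_1}_0\|_2\|w\|_{\mathrm{A}}$ and $\|\Delta\vec{\Phi}^{z_1}_1\|_{\mathrm{B}}\lesssim\|q_0-\psi^{z_1}_0\|_2$, and close in the weighted space $\mathrm{A}$ via Propositions \ref{Prop: 2.1}--\ref{Prop: 2.2} and smallness, reading off \eqref{est E} from the definition \eqref{def:spaceAB}. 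The only deviations are cosmetic: you run a contraction fixed point in $\mathrm{A}$ where the paper directly absorbs $C\|q_0-\psi^{z_1}_0\|_2\|\Phi_s\|_{\mathrm{A}}$ (your version is, if anything, more careful about the a priori finiteness of $\|\Phi_s\|_{\mathrm{A}}$, though your appeal to ``uniqueness in Proposition \ref{Prop: 2.1}'' should really be the small-operator uniqueness argument from Lemma \ref{lemma1}), and for \eqref{est E2} you bootstrap on the first-order system $\partial_x\Phi=X(q_0,z_1)\Phi$ while the paper differentiates \eqref{equ:Phis} and reuses the $\mathrm{A}$--$\mathrm{B}$ machinery—both yield the claim.
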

\begin{proof}
	Via the normalization condition \eqref{normPhi},  $\Phi_s$ belongs to $H^1(\mathbb{R})\cap$ span$\{\vec{\Phi}^{z_1}_1\}^\perp$ and admits
	\begin{align}
		\mathcal{L}(z_1)\Phi_s =\Delta(\Phi_s+\vec{\Phi}^{z_1}_1)=z_1\left(\begin{array}{cc}
			(\psi_0^{z_1}-q_0)(\Phi_{21}^{z_1}+\Phi_{s2} )\\
			(\bar{\psi}_0^{z_1}-\bar{q}_0)(\Phi_{11}^{z_1}+\Phi_{s1} )
		\end{array}\right).\label{equ:Phis}
	\end{align}
	As an image of $\mathcal{L}(z_1)\Phi_s$, it holds that $\mathcal{L}(z_1)\Phi_s \in  L^2(\mathbb{R})\cap$ span$\{\sigma_1\overline{\vec{\Phi}^{z_1}_1}\}^\perp$.
	By  H\"older's inequality,
	it is adduced that for any scalar function $h$,
	\begin{align*}
		&	\|(\psi_0^{z_1}-q_0)h\|_{\mathrm{B}_1}\lesssim \|\psi_0^{z_1}-q_0\|_2 \|h\|_{\mathrm{A}_2},\quad \|(\psi_0^{z_1}-q_0)h\|_{\mathrm{B}_2}\lesssim \|\psi_0^{z_1}-q_0\|_2 \|h\|_{\mathrm{A}_1}.
	\end{align*}
	Indeed, taking $\|(\psi_0^{z_1}-q_0)h\|_{\mathrm{B}_1}$ as an example, the bound follows from that
	\begin{align*}
		\|(\psi_0^{z_1}-q_0)h\|_{\mathrm{B}_1}=\inf_{f_1+g_1=h}&\left(\|(\Phi^{z_1}_{11})^{-1} (\psi_0^{z_1}-q_0)f_1\|_{1}+\|(\Phi^{z_1}_{11})^{-1} (\psi_0^{z_1}-q_0)f_1\|_{2}\right.\\
		& \left.+\|(\Phi^{z_1}_{21})^{-1}(\psi_0^{z_1}-q_0)g_1\|_{2}\right)\\
		\leq \inf_{f_1+g_1=h}&\left(\| \psi_0^{z_1}-q_0\|_{2}\|(\Phi^{z_1}_{11})^{-1}f_1\|_2+\| \psi_0^{z_1}-q_0\|_{2}\|(\Phi^{z_1}_{11})^{-1}f_1\|_{\infty}\right.\\
		& \left.+\| \psi_0^{z_1}-q_0\|_{2}\|(\Phi^{z_1}_{21})^{-1}g_1\|_{\infty}\right)\\
		&\lesssim \|\psi_0^{z_1}-q_0\|_2 \|h\|_{\mathrm{A}_1}.
	\end{align*}
	Together with the fact that $\vec{\Phi}^{z_1}_1\in \mathrm{A}$ and Proposition \ref{Prop: 2.2}, it is readily seen that
	\begin{align*}
		\|\Phi_s \|_{\mathrm{A}}\leq C\|\psi_0^{z_1}-q_0\|_2 (1+	\|\Phi_s \|_{\mathrm{A}}).
	\end{align*}

	When $\|\psi_0^{z_1}-q_0\|_2$ is small enough, it then follows that
	\begin{align*}
		\|\Phi_s \|_{\mathrm{A}}\lesssim\|\psi_0^{z_1}-q_0\|_2  .
	\end{align*}
	This completes the proof of \eqref{est E} from the definition of the norm of $\mathrm{A}$. As for \eqref{est E2}, we consider the case $k=j=1$. we differentiate the equation \eqref{equ:Phis} with respect to $x$ and obtain
	\begin{align*}
		\mathcal{L}(z_1)\partial_x\Phi_s=\partial_x(\Delta\vec{\Phi}^{z_1}_1)+\left( z_1\left(\begin{array}{cc}
			0 & \partial_x\psi^{z_1}_0  \\
			\partial_x	\bar{\psi}^{z_1}_0 & 0
		\end{array}\right)+\partial_x\Delta\right) \Phi_s+\Delta\partial_x\Phi_s.
	\end{align*}
	Analogously,
	\begin{align*}
		\big\|\left( z_1\left(\begin{array}{cc}
			0 & \partial_x\psi^{z_1}_0  \\
			\partial_x	\bar{\psi}^{z_1}_0 & 0
		\end{array}\right)+\partial_x\Delta\right) \Phi_s\big\|_{\mathrm{B}}&\lesssim(1+\|\partial_x(\psi_0^{z_1}-q_0)\|_2 )\|\Phi_s \|_{\mathrm{A}},\\
		\|\partial_x(\Delta\vec{\Phi}^{z_1}_1)\|_{\mathrm{B}}&\lesssim\|\psi_0^{z_1}-q_0\|_{H_1}.
	\end{align*}
	It  thereby transpires that
	\begin{align*}
		\|\partial_x\Phi_s \|_{\mathrm{A}}\lesssim&\|\psi_0^{z_1}-q_0\|_{H_1}+(1+\|\partial_x(\psi_0^{z_1}-q_0)\|_2 )\|\Phi_s \|_{\mathrm{A}}+\|\psi_0^{z_1}-q_0\|_2 \|\partial_x\Phi_s \|_{\mathrm{A}}.
	\end{align*}
	Then under  the smallness assumption on $\|\psi_0^{z_1}-q_0\|_2 $, it then follows that
	\begin{align*}
		\|\partial_x\Phi_s \|_{\mathrm{A}}<\infty,
	\end{align*}
	which leads to \eqref{est E2} immediately. This  completes the proof of Proposition \ref{Prop 2.5}.
\end{proof}
The next lemma is the crux of this section.
\begin{lemma}\label{lemma2.2}
	There exists a small positive constant $\epsilon$, such that if  $q_0\in L^2(\mathbb{R})$ and $\|\psi_0^{z_1}-q_0\|_2\leq \epsilon$, then $q^{(1)}_0 \in  L^2(\mathbb{R})$ satisfying
	\begin{equation}
		\|q^{(1)}_0\|_2\lesssim \|\psi_0^{z_1}-q_0\|_2, \label{norm small q}
	\end{equation}
 where $\Phi=(\Phi_1,\Phi_2)^T$ is  the solution of the  spectral problem \eqref{lax0x} under normalization condition \eqref{normPhi} and spectral $z_1$ given in Lemma \ref{lemma1}, and  $q^{(1)}_0$ is the B\"acklund transformation of $q$ under ($\Phi$, $z_1$) with
	\begin{align}
		q^{(1)}_0=\left(\frac{z_1|\Phi_2|^2+\bar{z}_1|\Phi_1|^2}{z_1|\Phi_1|^2+\bar{z}_1|\Phi_2|^2} \right)^2 \left[ \frac{2i(z_1^2-\bar{z}_1^2)\Phi_1\bar{\Phi}_2}{z_1|\Phi_2|^2+\bar{z}_1|\Phi_1|^2}-q_0\right]. \label{defq01}
\end{align}
Moreover, when $q_0\in H^3(\mathbb{R})$, $q^{(1)}_0\in H^3(\mathbb{R})$.
\end{lemma}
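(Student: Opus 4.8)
The plan is to exploit the fact that \eqref{defq01} is exactly the B\"acklund transformation \eqref{def:darb} evaluated at $(q_0,\Phi,z_1)$, together with the observation that this transformation annihilates the exact soliton data. Writing $r:=q_0-\psi^{z_1}_0$ with $\|r\|_2\le\epsilon$ and $\Phi=\vec{\Phi}^{z_1}_1+\Phi_s$, I would first record the crucial algebraic identity: the right-hand side of \eqref{defq01} vanishes identically when $\Phi=\vec{\Phi}^{z_1}_1$ and $q_0=\psi^{z_1}_0$. This is not accidental. The $1$-soliton $\psi^{z_1}_0$ together with its bound-state eigenvector $\vec{\Phi}^{z_1}_1$ is precisely the B\"acklund image of $(0,\Phi^{\mathrm{free}})$ at spectrum $z_1$, and the map \eqref{def:darb}--\eqref{def:darb2} is an involution: a short computation shows the eigenvector transformation sends $\Phi\mapsto\Phi^{(1)}\mapsto\Phi$, so transforming the soliton back along $\vec{\Phi}^{z_1}_1$ returns the zero potential. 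I would confirm this by direct substitution of the explicit $\vec{\Phi}^{z_1}_1$ from \eqref{Phiz1}, using $|\Phi^{z_1}_{11}|^2=e^{-2\eta x}/|A|^2$ and $|\Phi^{z_1}_{21}|^2=e^{2\eta x}/|A|^2$ with $A=z_1e^{2\eta x}+\bar{z}_1e^{-2\eta x}$, which collapses the bracket in \eqref{defq01} to zero.

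Granting the identity, the second step is to write $q^{(1)}_0$ as the difference between \eqref{defq01} and its (vanishing) value at the soliton data, and to expand this difference to first order in the perturbations $\Phi_s$ and $r$. The numerator $2i(z_1^2-\bar{z}_1^2)\Phi_1\bar{\Phi}_2$, the term $q_0$, and the two denominators $z_1|\Phi_1|^2+\bar{z}_1|\Phi_2|^2$ (in the prefactor) and $z_1|\Phi_2|^2+\bar{z}_1|\Phi_1|^2$ (in the quotient) are smooth functions of $(\Phi_1,\Phi_2,\bar{\Phi}_1,\bar{\Phi}_2,q_0)$. Before expanding I would verify that these denominators stay controlled relative to their soliton size: since $\eta>0$ forces $z_1$ to be non-purely-imaginary, the soliton factor $B=\bar{z}_1e^{2\eta x}+z_1e^{-2\eta x}$ never vanishes, so for $\epsilon$ small the perturbed denominators equal their soliton values times $1+O(\|\Phi_s\|)$ and the prefactor stays of modulus close to $1$. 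Expanding then produces a finite sum in which each term is a bounded factor multiplied either by one of $\Phi_{s1},\Phi_{s2},\bar{\Phi}_{s1},\bar{\Phi}_{s2}$ or by $r$.

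The core of the estimate is to show each such term lies in $L^2$ with norm $\lesssim\|r\|_2$, and this is exactly what the structured decomposition of $\Phi_s$ from Proposition \ref{Prop 2.5} is built for. Writing $\Phi_{s1}=\Phi^{z_1}_{11}E_{11}-\bar{\Phi}^{z_1}_{21}E_{12}$ and $\Phi_{s2}=\bar{\Phi}^{z_1}_{11}E_{21}-\Phi^{z_1}_{21}E_{22}$, every perturbative term carries the decaying weights $\Phi^{z_1}_{11},\Phi^{z_1}_{21}$ (of size $e^{-\eta|x|}$) paired against the profiles $E_{ij}$. I would group the terms so that the $E_{12},E_{21}$ factors (controlled only in $L^2\cap L^\infty$) always multiply a rapidly decaying bounded coefficient, while the $E_{11},E_{22}$ factors (controlled only in $L^\infty$) always appear against an $L^2$ weight supplied by $\Phi^{z_1}_{11},\Phi^{z_1}_{21}$; the term linear in $r$ is simply the prefactor (modulus near $1$) times $r$ and is immediately $\lesssim\|r\|_2$. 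Summing and invoking \eqref{est E} yields $\|q^{(1)}_0\|_2\lesssim\|r\|_2$. For the $H^3$ statement I would differentiate the same expansion up to three times: since $\psi^{z_1}_0$ and the weights $\Phi^{z_1}_{ij}$ are smooth with controlled growth, and by \eqref{est E2} the derivatives $\partial_x^jE_{ij}$ are finite for $j=1,2,3$ when $q_0\in H^3$, each differentiated term is again a bounded coefficient times an $L^2$ function, and the denominators remain bounded below under differentiation, so $q^{(1)}_0\in H^3(\mathbb{R})$.

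The main obstacle I anticipate is the exponential growth of the reciprocal denominators: the soliton denominator $z_1|\Phi_2|^2+\bar{z}_1|\Phi_1|^2$ decays exponentially, so its reciprocal grows exponentially, and one must check that every perturbative numerator decays fast enough to compensate. This delicate balance is precisely what the weighted norms $\mathrm{A},\mathrm{B}$ and the factorization of $\Phi_s$ through the decaying weights $\Phi^{z_1}_{11},\Phi^{z_1}_{21}$ are designed to track. Getting these pairings right, so that the soliton identity's cancellation is exposed and each term genuinely closes in $L^2$, is where the care is needed, rather than in the routine differentiation required for the $H^3$ claim.
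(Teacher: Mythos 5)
Your proposal matches the paper's proof in all essentials: the paper likewise rests on the identity that the bracket in \eqref{defq01} vanishes at the soliton data $(\psi^{z_1}_0,\vec{\Phi}^{z_1}_1)$, expands the B\"acklund formula exactly (not just to first order) around that data using the decomposition $\Phi_s$ from Proposition \ref{Prop 2.5}, packages the perturbative numerator and denominator corrections into explicit functions $W$ and $\tilde{W}$ with $\|W\|_{2\cap\infty},\|\tilde{W}\|_\infty\lesssim\|q_0-\psi^{z_1}_0\|_2$ via exactly the pairing you describe ($E_{12},E_{21}$ against bounded coefficients, $E_{11},E_{22}$ against the exponentially decaying $L^2$ weights such as $\Phi^{z_1}_{11}\bar{\Phi}^{z_1}_{21}$ over the denominator), and obtains the $H^3$ claim by differentiating with \eqref{est E2} and the lower bound on $1+\tilde{W}$. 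The only cosmetic differences are that you justify the vanishing identity via the involution property (which the paper verifies only later, in the proof of Theorem \ref{mainthm}) where the paper simply states it, and your phrase ``to first order'' should be read as the exact multilinear expansion in which every term carries at least one factor of $\Phi_s$ or $r$, which is what your subsequent description and the paper's $W,\tilde{W}$ actually implement.
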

\begin{proof}
In view of  Proposition \ref{Prop 2.5}, a simple calculation shows that
	\begin{align}
		\frac{\Phi_1\bar{\Phi}_2}{z_1|\Phi_2|^2+\bar{z}_1|\Phi_1|^2}=\frac{\Phi_{11}^{z_1}\bar{\Phi}_{21}^{z_1}}{z_1|\Phi_{21}^{z_1}|^2+\bar{z}_1|\Phi_{11}^{z_1}|^2}+\frac{W-\frac{\Phi_{11}^{z_1}\bar{\Phi}_{21}^{z_1}}{z_1|\Phi_{21}^{z_1}|^2+\bar{z}_1|\Phi_{11}^{z_1}|^2}\tilde{W}}{1+\tilde{W}},
	\end{align}
	where
	\begin{align}
		W=\frac{1}{z_1|\Phi_{21}^{z_1}|^2+\bar{z}_1|\Phi_{11}^{z_1}|^2}&\left[(\Phi_{11}^{z_1})^2\bar{E}_{21}(1+E_{11})+ \Phi_{11}^{z_1}\bar{\Phi}_{21}^{z_1}(E_{11}-\bar{E}_{22}-E_{12}\bar{E}_{21}-E_{11}\bar{E}_{22})\right.\nonumber\\
		&\left.-(\bar{\Phi}_{21}^{z_1})^2E_{12}(1-\bar{E}_{22})\right] ,\\
		\tilde{W}=\frac{1}{z_1|\Phi_{21}^{z_1}|^2+\bar{z}_1|\Phi_{11}^{z_1}|^2}&\left[|\Phi_{11}^{z_1}|^2(2\re E_{11}+|E_{11}|^2+|E_{21}|^2)-2\re(\Phi_{11}^{z_1}\Phi_{21}^{z_1}\bar{E}_{12}(1+E_{11}))\right.\nonumber\\
		&\left.-|\Phi_{21}^{z_1}|^2(2\re E_{22}-|E_{12}|^2-|E_{22}|^2)+2\re(\Phi_{11}^{z_1}\Phi_{21}^{z_1}\bar{E}_{21}(1-E_{22})) \right] .
	\end{align}
	The  exact expressions of $\Phi_{11}^{z_1}$ and $\Phi_{21}^{z_1}$ in \eqref{Phiz1} imply that for  $j=1,2$,
	\begin{align*}
		\frac{|\Phi_{j1}^{z_1}|^2}{z_1|\Phi_{21}^{z_1}|^2+\bar{z}_1|\Phi_{11}^{z_1}|^2}\in C^{\infty}_B(\mathbb{R}),\qquad \frac{|\Phi_{11}^{z_1}\bar{\Phi}_{21}^{z_1}|}{z_1|\Phi_{21}^{z_1}|^2+\bar{z}_1|\Phi_{11}^{z_1}|^2}\in \mathcal{S}(\mathbb{R}).
	\end{align*}
	It thereby follows that
	\begin{align*}
		\|W\|_{2\cap\infty}\lesssim& \|E_{21}\|_{2\cap\infty}(1+\|E_{11}\|_\infty)+\|E_{12}\|_{2\cap\infty}(1+\|E_{22}\|_\infty)\\
		&+\|E_{11}\|_\infty+\|E_{22}\|_\infty+\|E_{11}E_{22}\|_\infty+\|E_{12}E_{21}\|_\infty,\\
		\|\tilde{W}\|_\infty\lesssim&\sum_{i,j=1,2}\|E_{ij}\|_\infty+(\sum_{i,j=1,2}\|E_{ij}\|_\infty)^2.
	\end{align*}
Then  it is readily seen from the estimates in \eqref{est E} that
\begin{align}
\label{est W}	\|W\|_{2\cap\infty},\ \|\tilde{W}\|_\infty\lesssim \|q_0-\psi^{z_1}_0\|_2.
\end{align}
	Similarly, it is found that
	\begin{align*}
		\left(\frac{z_1|\Phi_2|^2+\bar{z}_1|\Phi_1|^2}{z_1|\Phi_1|^2+\bar{z}_1|\Phi_2|^2} \right)^2=	\left(\frac{z_1|\Phi^{z_1}_{21}|^2+\bar{z}_1|\Phi^{z_1}_{11}|^2}{z_1|\Phi^{z_1}_{11}|^2+\bar{z}_1|\Phi^{z_1}_{21}|^2} \right)^2\left(\frac{1+\overline{\tilde{W}}}{1+\tilde{W}} \right) ^2,
	\end{align*}
with estimate
\begin{align*}
	\big\|\left(\frac{1+\overline{\tilde{W}}}{1+\tilde{W}} \right) ^2\big\|_\infty\lesssim1.
\end{align*}
It also follows from the exact expressions of $\Phi_{11}^{z_1}$ and $\Phi_{21}^{z_1}$ in \eqref{Phiz1} that
\begin{align*}
	\left(\frac{z_1|\Phi^{z_1}_{21}|^2+\bar{z}_1|\Phi^{z_1}_{11}|^2}{z_1|\Phi^{z_1}_{11}|^2+\bar{z}_1|\Phi^{z_1}_{21}|^2} \right)^2\in C^{\infty}_B(\mathbb{R}).
\end{align*}
	Combining with the estimates in \eqref{est E} and the fact that
	\begin{align*}
		0=\left(\frac{z_1|\Phi^{z_1}_{21}|^2+\bar{z}_1|\Phi^{z_1}_{11}|^2}{z_1|\Phi^{z_1}_{11}|^2+\bar{z}_1|\Phi^{z_1}_{21}|^2} \right)^2\left( \psi^{z_1}_0-\frac{2i(z_1^2-\bar{z}_1^2)\Phi_{11}^{z_1}\bar{\Phi}_{21}^{z_1}}{z_1|\Phi_{21}^{z_1}|^2+\bar{z}_1|\Phi_{11}^{z_1}|^2}\right) ,
	\end{align*}
	it is readily seen that
	\begin{align*}
		q^{(1)}_0=&\left(\frac{z_1|\Phi^{z_1}_{21}|^2+\bar{z}_1|\Phi^{z_1}_{11}|^2}{z_1|\Phi^{z_1}_{11}|^2+\bar{z}_1|\Phi^{z_1}_{21}|^2} \right)^2\left(\frac{1+\overline{\tilde{W}}}{1+\tilde{W}} \right) ^2\left( \psi^{z_1}_0-q_0+2i(z_1^2-\bar{z}_1^2)\frac{W-\frac{\Phi_{11}^{z_1}\bar{\Phi}_{21}^{z_1}}{z_1|\Phi_{21}^{z_1}|^2+\bar{z}_1|\Phi_{11}^{z_1}|^2}\tilde{W}}{1+\tilde{W}}\right) ,\\
		=&\left(\frac{z_1|\Phi^{z_1}_{21}|^2+\bar{z}_1|\Phi^{z_1}_{11}|^2}{z_1|\Phi^{z_1}_{11}|^2+\bar{z}_1|\Phi^{z_1}_{21}|^2} \right)^2\left(\frac{1+\overline{\tilde{W}}}{1+\tilde{W}} \right) ^2( \psi^{z_1}_0-q_0)\\
		&+\left(\frac{z_1|\Phi^{z_1}_{21}|^2+\bar{z}_1|\Phi^{z_1}_{11}|^2}{z_1|\Phi^{z_1}_{11}|^2+\bar{z}_1|\Phi^{z_1}_{21}|^2} \right)^2\left(\frac{1+\overline{\tilde{W}}}{1+\tilde{W}} \right) ^22i(z_1^2-\bar{z}_1^2)\frac{W-\frac{\Phi_{11}^{z_1}\bar{\Phi}_{21}^{z_1}}{z_1|\Phi_{21}^{z_1}|^2+\bar{z}_1|\Phi_{11}^{z_1}|^2}\tilde{W}}{1+\tilde{W}}.
	\end{align*}
Therefore, it  transpires from \eqref{est W} that
\begin{align*}
	\|	q^{(1)}_0\|_2\lesssim \|q_0-\psi^{z_1}_0\|_2+\|W\|_2+\|\tilde{W}\|_\infty\lesssim\|q_0-\psi^{z_1}_0\|_2,
\end{align*}
which gives the estimates \eqref{norm small q}. To prove $q^{(1)}_0\in H^3(\mathbb{R})$, it remains to show that $$\frac{1+\overline{\tilde{W}}}{1+\tilde{W}} \in W^{3,\infty}(\mathbb{R}),\qquad\frac{W-\frac{\Phi_{11}^{z_1}\bar{\Phi}_{21}^{z_1}}{z_1|\Phi_{21}^{z_1}|^2+\bar{z}_1|\Phi_{11}^{z_1}|^2}\tilde{W}}{1+\tilde{W}}\in H^3(\mathbb{R}).$$
	It is sufficient to prove $\partial^j_xW\in L^2(\mathbb{R})$ and $\partial^j_x\tilde{W}\in L^\infty(\mathbb{R})$ for $j=1,2,3$ which comes from the estimates  \eqref{est E2} immediately. This implies the desired result as indicated above.
\end{proof}
Furthermore, the B\"acklund transformation \eqref{def:darb2} also gives a solution $\Phi^{(1)}:=\Phi^{(1)}(q^{(1)}_0,z_1)=(\Phi^{(1)}_1,\Phi^{(1)}_2)^T$ of the spectral problem (\ref{lax0x}) associated to $q^{(1)}_0$, where
\begin{align}
	\Phi^{(1)}_1=\frac{\bar{\Phi}_2}{z_1|\Phi_1|^2+\bar{z}_1|\Phi_2|^2},\qquad\Phi^{(1)}_2=\frac{\bar{\Phi}_1}{\bar{z}_1|\Phi_1|^2+z_1|\Phi_2|^2}\label{Phi(1)}.
\end{align}
%%%%%%%%%%%%%%%%%%%%%%%%%%%%%%%%%%%%%%%%%%%%%%%%%%%%%%%%%%%%%%%%%%%%%%%%%%%%%%%%%%%%%%%%%%%%%%%%%%%%%%%%%%%%%%
%%%%%%%%%%%%%%%%%%%%%%%%%%%%%%%%%%%%%%%%%%%%%%%%%%%%%%%%%%%%%%%%%%%%%%%%%%%%%%%%%%%%%%%%%%%%%%%%%%%%%%%%%%%%%%
\section{From  a small solution back to perturbed 1-soliton solution}
\label{sec 3}

In this section, we apply the B\"acklund transformation provided by \eqref{def:darb}-\eqref{def:darb2} to transform a solution of the DNLS equation \eqref{DNLS} from an $L^2$-neighborhood of the zero solution to an $L^2$-neighborhood of the 1-soliton solution. To accomplish this, we first construct the Jost function of the spectral problem \eqref{lax0x} associated with a small solution $q^{(1)}_0$ at $t=0$ under the spectrum $z_1$ with Im$z_1^2>0$.
Denote its fundamental solution matrix  as $\Psi_0:=\Psi_0(q^{(1)}_0,z_1)$. Define a matrix Jost function $\mu(x)=(\mu_{ij})_{2\times2}$   by
\begin{align}
	\mu(x)=\Psi_0(x) e^{iz_1^2x\sigma_3},\label{equ Phi0}
\end{align}
which thereby satisfies
\begin{align}
	\partial_x\mu=-2iz_1^2\left(\begin{array}{cc}
		0 & \mu_{12}  \\
		-\mu_{21} & 0
	\end{array}\right)+U(q^{(1)}_0,z_1)\mu,\label{equ:mu}
\end{align}
with $U$  defined in  \eqref{def:U}. Denote $\mu=(\mu_1,\mu_2)$,
where the subscript 1 and 2 indicate the first and second columns of $\mu$, respectively.
Then  equation \eqref{equ:mu} can be converted to  two  Volterra type integral equations, respectively.
\begin{align}
	&\mu_1(x)=\left(\begin{array}{cc}
		1  \\
		0
	\end{array}\right)+\int^x_{-\infty}\left(\begin{array}{cc}
		1 &0 \\
		0&e^{2iz_1^2(x-s)}
	\end{array}\right)U(q^{(1)}_0,z_1)(s)\mu_1(s)\dif s,\label{int mu1}\\
	&\mu_2(x)=\left(\begin{array}{cc}
		0  \\
		1
	\end{array}\right)+\int^x_{+\infty}\left(\begin{array}{cc}
		e^{-2iz_1^2(x-s)} &0 \\
		0&1
	\end{array}\right)U(q^{(1)}_0,z_1)(s)\mu_2(s)\dif s.\label{int mu2}
\end{align}
The existence and uniqueness of the solutions to above equations are given in the following proposition.
\begin{Proposition}\label{Prop 3.1}
	There exists a small constant $\delta>0$ such that if $\|q^{(1)}_0\|_2<\delta$, then the integral equations \eqref{int mu1} and \eqref{int mu2} have unique solutions in the spaces $L^\infty(\mathbb{R})\times (L^\infty(\mathbb{R})\cap L^2(\mathbb{R}))$,  $(L^\infty(\mathbb{R})\cap L^2(\mathbb{R}))\times L^\infty(\mathbb{R})$, respectively satisfying
the following estimates
	\begin{align*}
		\|\mu_{11}-1\|_\infty+\|\mu_{21}\|_{2\cap\infty}+\|\mu_{22}-1\|_\infty+\|\mu_{12}\|_{2\cap\infty}\lesssim\|q^{(1)}_0\|_2.
	\end{align*}
	Furthermore, if $q^{(1)}_0\in H^3(\mathbb{R})$, then for $j=1,2,3$,
	\begin{align*}
		\|\partial_x^j\mu_{11}\|_\infty+\|\partial_x^j\mu_{21}\|_{2\cap\infty}+\|\partial_x^j\mu_{22}\|_\infty+\|\partial_x^j\mu_{12}\|_{2\cap\infty}<\infty.
	\end{align*}
\end{Proposition}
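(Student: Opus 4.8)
The plan is to treat \eqref{int mu1} and \eqref{int mu2} as fixed-point equations and solve them by a Neumann-series/contraction argument in the mixed Lebesgue spaces dictated by the statement. I will focus on $\mu_1=(\mu_{11},\mu_{21})^T$, since the equation for $\mu_2$ is entirely symmetric (integration from $+\infty$ with the roles of the two components interchanged). Writing \eqref{int mu1} componentwise, using that $U(q^{(1)}_0,z_1)=z_1\left(\begin{smallmatrix}0&q^{(1)}_0\\-\bar{q}^{(1)}_0&0\end{smallmatrix}\right)$ and that $2iz_1^2=2i\xi-2\eta$ with $\eta>0$, the system becomes
\begin{align*}
\mu_{11}(x)&=1+z_1\int_{-\infty}^x q^{(1)}_0(s)\mu_{21}(s)\dif s,\\
\mu_{21}(x)&=-z_1\int_{-\infty}^x e^{2iz_1^2(x-s)}\bar{q}^{(1)}_0(s)\mu_{11}(s)\dif s.
\end{align*}
Setting $X=L^\infty(\mathbb{R})\times(L^\infty\cap L^2)(\mathbb{R})$ and letting $K$ be the integral operator on the right-hand side, I would solve $\mu_1=(1,0)^T+K\mu_1$ by establishing the operator bound $\|K\|_{X\to X}\lesssim\|q^{(1)}_0\|_2$.

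The heart of the matter is this operator estimate, and the main obstacle is that $q^{(1)}_0$ is controlled only in $L^2$, so it cannot be pulled out in $L^\infty$; instead every product of $q^{(1)}_0$ with a component of $\mu$ must be paired in $L^2$ and combined with the decaying kernel $k(r)=e^{2iz_1^2 r}\chi_{r>0}$, which satisfies $|k(r)|=e^{-2\eta r}\chi_{r>0}$ and therefore lies in $L^1\cap L^2$ \emph{precisely because} $\eta>0$. For the first component, Cauchy--Schwarz gives $\|K_1\mu_1\|_\infty\le|z_1|\,\|q^{(1)}_0\|_2\|\mu_{21}\|_2$. For the second component I would recognize it as the one-sided convolution $-z_1\,k*(\bar{q}^{(1)}_0\mu_{11})$ and apply Young's inequality in two ways: pairing $\|k\|_1$ with the $L^2$ factor $\bar{q}^{(1)}_0\mu_{11}$ gives $\|K_2\mu_1\|_2\lesssim\|q^{(1)}_0\|_2\|\mu_{11}\|_\infty$, while pairing $\|k\|_2$ with the same $L^2$ factor gives $\|K_2\mu_1\|_\infty\lesssim\|q^{(1)}_0\|_2\|\mu_{11}\|_\infty$. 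Collecting these yields $\|K\mu_1\|_X\le C(z_1)\|q^{(1)}_0\|_2\|\mu_1\|_X$; choosing $\delta$ so that $C(z_1)\delta<\tfrac12$ makes $I-K$ invertible via the Neumann series, which furnishes both existence and uniqueness in $X$. The stated bound then follows from $\mu_1-(1,0)^T=K\mu_1$ and $\|\mu_1-(1,0)^T\|_X=\|K\mu_1\|_X\lesssim\|q^{(1)}_0\|_2\bigl(1+\|\mu_1-(1,0)^T\|_X\bigr)$, absorbing the last term for $\delta$ small.

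For the higher-regularity claim when $q^{(1)}_0\in H^3(\mathbb{R})$, I would bootstrap directly from the differential equation \eqref{equ:mu} rather than the integral form, reading off
\begin{align*}
\partial_x\mu_{11}=z_1 q^{(1)}_0\mu_{21},\qquad \partial_x\mu_{21}=2iz_1^2\mu_{21}-z_1\bar{q}^{(1)}_0\mu_{11},
\end{align*}
and differentiating repeatedly. Each differentiation costs one derivative of $q^{(1)}_0$, supplied by $q^{(1)}_0\in H^3$, together with the Sobolev embedding $H^1(\mathbb{R})\hookrightarrow L^\infty(\mathbb{R})$ to place $q^{(1)}_0$ and its derivatives in $L^\infty$; the zeroth-order bounds just obtained then close an induction delivering $\partial_x^j\mu_{11},\partial_x^j\mu_{22}\in L^\infty$ and $\partial_x^j\mu_{21},\partial_x^j\mu_{12}\in L^2\cap L^\infty$ for $j=1,2,3$. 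I expect the only genuinely delicate point to remain the bookkeeping of which factor is taken in $L^2$ versus $L^\infty$, so that the asymmetric target space $L^\infty\times(L^\infty\cap L^2)$ is reproduced exactly, with the kernel decay $\eta>0$ guaranteeing that all constants are uniform in $x$.
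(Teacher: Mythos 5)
Your fixed-point argument for existence, uniqueness, and the $O(\|q^{(1)}_0\|_2)$ bound is essentially the paper's proof: the paper also works in $L^\infty(\mathbb{R})\times(L^\infty(\mathbb{R})\cap L^2(\mathbb{R}))$, subtracts the constant vector (writing $n=\mu_1-(1,0)^T$ and $n=\mathcal{T}n+n^{(0)}$), and proves $\mathcal{T}$ is a contraction with norm $\lesssim\|q^{(1)}_0\|_2$ using exactly your pairings --- Cauchy--Schwarz for the first component and the decay $|e^{2iz_1^2(x-s)}|=e^{-2\eta(x-s)}$, $\eta>0$, for the second (the paper phrases the $L^2$ bound via a Minkowski-type change of variables where you invoke Young's inequality with $\|k\|_1$; these are the same estimate). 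Your two treatments of the free term are also equivalent, since $n^{(0)}=K(1,0)^T$.

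Where you genuinely diverge is the higher-regularity claim. The paper stays with the integral equation: it differentiates $n=\mathcal{T}n+n^{(0)}$, integrates by parts to write $\partial_x n=(\partial_x\mathcal{T})n+\partial_x n^{(0)}+\mathcal{T}(\partial_x n)$, checks the inhomogeneous terms lie in the mixed space, and reuses the invertibility of $I-\mathcal{T}$ at each order $j=1,2,3$. You instead bootstrap directly from the first-order ODE \eqref{equ:mu}, reading off $\partial_x\mu_{11}=z_1q^{(1)}_0\mu_{21}$ and $\partial_x\mu_{21}=2iz_1^2\mu_{21}-z_1\bar{q}^{(1)}_0\mu_{11}$ and differentiating, with $H^3(\mathbb{R})\hookrightarrow W^{2,\infty}(\mathbb{R})$ placing $q^{(1)}_0$ and its first two derivatives in $L^2\cap L^\infty$. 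This is valid (the zeroth-order bounds make $\mu_1$ a classical $C^1$ solution of the ODE, so repeated differentiation is legitimate) and is in fact more direct than the paper's route: because the ODE is first order, $\partial_x^j\mu$ is an explicit algebraic expression in lower-order derivatives, so no second fixed-point argument or integration by parts is needed, and the Leibniz bookkeeping visibly reproduces the asymmetric space $L^\infty\times(L^\infty\cap L^2)$ (e.g.\ $\bar{q}^{(1)}_0\partial_x^k\mu_{11}\in L^2\cap L^\infty$ since $q^{(1)}_0\in L^2\cap L^\infty$ and $\partial_x^k\mu_{11}\in L^\infty$). The paper's variant, by contrast, extends more readily to situations where one wants quantitative control of the derivative norms through $(I-\mathcal{T})^{-1}$, but for the mere finiteness asserted in the proposition your bootstrap suffices.
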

\begin{proof}
	The proof of this proposition is given by taking $\mu_1$ as an example. For convenience, denote
	\begin{align*}
		n(x)=\left(\begin{array}{cc}
			n_1(x)  \\
			n_2(x)
		\end{array}\right)=\mu_1(x)-\left(\begin{array}{cc}
			1  \\
			0
		\end{array}\right).
	\end{align*}
	Then the integral equation \eqref{int mu1} reads
	\begin{align}
		n=\mathcal{T}n+n^{(0)},\label{equ n}
	\end{align}
where $\mathcal{T}$ is a integral operator defined by
	\begin{align*}
	\mathcal{T}\left(\begin{array}{cc}
		n_1  \\
		n_2
	\end{array}\right)(x)=\int_{\mathbb{R}}\chi_+(x-s)\left(\begin{array}{cc}
			1 &0 \\
			0&e^{2iz_1^2(x-s)}
		\end{array}\right)U(q^{(1)}_0,z_1)(s)\left(\begin{array}{cc}
		n_1(s)  \\
		n_2(s)
	\end{array}\right)\dif s,
	\end{align*}
	and
	\begin{align*}
		n^{(0)}=\left(\begin{array}{cc}
			0 \\
			n^{(0)}_2(x)
		\end{array}\right)=\left(\begin{array}{cc}
			0  \\
			-z_1\int^x_{-\infty}e^{2iz_1^2(x-s)}\bar{q}^{(1)}_0(s)\dif s
		\end{array}\right).
	\end{align*}
	A directly calculation shows that
	\begin{align*}
		&	\|n^{(0)}_2\|_\infty\leq  |z_1|\|q^{(1)}_0\|_2\|\chi_-e^{2\eta x}\|_2\lesssim\|q^{(1)}_0\|_2,\\
		&\|n^{(0)}_2\|_2\leq |z_1|\int_{-\infty}^0(\int_{\mathbb{R}}|\bar{q}(s-x)|^2\dif x)^{1/2}e^{2\eta s}\dif s\lesssim\|q^{(1)}_0\|_2.
	\end{align*}
	Next, we prove that $\mathcal{T}$ is a
	contraction operator on $L^\infty(\mathbb{R})\times (L^\infty(\mathbb{R})\cap L^2(\mathbb{R}))$. For any $f=(f_1,f_2)^T\in L^\infty(\mathbb{R})\times (L^\infty(\mathbb{R})\cap L^2(\mathbb{R}))$, applying the  H\"older inequality gives that,
	\begin{align*}
		\|(\mathcal{T}f)_1\|_\infty&=\|\int^x_{-\infty}z_1q^{(1)}_0(s)f_2(s)\dif s\|_\infty\leq|z_1|\|f_2\|_2\|q^{(1)}_0\|_2,\\
		\|(\mathcal{T}f)_2\|_\infty&=\|\int^x_{-\infty}e^{2iz_1^2(x-s)}z_1\bar{q}^{(1)}_0(s)f_1(s)\dif s \|_\infty\\
		&\leq|z_1| \|f_1\|_\infty\int^x_{-\infty}e^{-2\eta (x-s)}|q^{(1)}_0(s)|\dif s\lesssim\|f_1\|_\infty\|q^{(1)}_0\|_2,\\
		\|(\mathcal{T}f)_2\|_2&\leq \|\int^0_{-\infty}e^{-2iz_1^2s}z_1\bar{q}^{(1)}_0(s-x)f_1(s-x)\dif s\|_2\\
		&\lesssim\|f_1\|_\infty\|q^{(1)}_0\|_2.
	\end{align*}
	Therefore, when $\|q^{(1)}_0\|_2$ is sufficiently
	small, $\mathcal{T}$ is a
	contraction operator, which implies that $I-\mathcal{T}$ is reversible. It thereby appears that
	\begin{align*}
		\|n_1\|_\infty+\|n_2\|_{2\cap\infty}\lesssim\|n_2^{(0)}\|_{2\cap\infty}\lesssim\|q^{(1)}_0\|_2.
	\end{align*}
		When $q^{(1)}_0\in H^3(\mathbb{R})$, taking the derivative of  \eqref{equ n} and integral by parts yields
	\begin{align*}
		\partial_x n&=\partial_xn^{(0)}+\left(\begin{array}{cc}
			z_1q^{(1)}_0(x)n_2(x)  \\
			-	\int^x_{-\infty}2iz_1^3e^{2iz_1^2(x-s)}\bar{q}^{(1)}_0(s)n_1(s)\dif s-z_1\bar{q}^{(1)}_0(x)n_1(x)
		\end{array}\right)\\
	&=(\partial_x\mathcal{T})n+\partial_xn^{(0)}+\mathcal{T}(\partial_xn),
	\end{align*}
	where
	\begin{align*}
		&(\partial_x\mathcal{T})n=\left(\begin{array}{cc}
		z_1\int^x_{-\infty}\partial_s(q^{(1)}_0)(s)n_2(s)  \\
			-	\int^x_{-\infty}z_1e^{2iz_1^2(x-s)}\partial_s(\bar{q}^{(1)}_0)(s)n_1(s)\dif s
		\end{array}\right),\\
		&\partial_x	n^{(0)}=\left(\begin{array}{cc}
			0 \\
			-2iz_1^3\int^x_{-\infty}e^{2iz_1^2(x-s)}\bar{q}^{(1)}_0(s)\dif s -z_1\bar{q}^{(1)}_0(x)
		\end{array}\right),
	\end{align*}
	are both in $L^\infty(\mathbb{R})\times (L^\infty(\mathbb{R})\cap L^2(\mathbb{R}))$ via analogous calculations.
	This in turn implies that $ \partial_x n\in L^\infty(\mathbb{R})\times (L^\infty(\mathbb{R})\cap L^2(\mathbb{R}))$. Similarly, it follows that  $ \partial_x^j n\in L^\infty(\mathbb{R})\times (L^\infty(\mathbb{R})\cap L^2(\mathbb{R}))$, $j=2,3$.
	This completes the proof of Proposition \ref{Prop 3.1}.
\end{proof}
Our attention is now turned to the time evolution.
Let $q^{(1)}_0\in H^3(\mathbb{R})$ be the  initial
data of the  DNLS equation \eqref{DNLS}. Then  it is shown in \cite{KillipDNLS, O1996}  that there  exists a  unique solution $q^{(1)}\in C([0,\infty), H^3(\mathbb{R}))$ of equation \eqref{DNLS} with $\|q^{(1)}\|_2=\|q^{(1)}_0\|_2$, which implies that $q^{(1)}\in C^1([0,\infty), H^1(\mathbb{R}))$. Then we could begin to consider the solution of the Lax pair \eqref{lax0x}-\eqref{lax0t} for every $t\geq 0$.
We use $\Psi_0(x)$ obtained in \eqref{equ Phi0} from $\mu(x)$ to be the  initial
data to solve the $t$-part of the Lax pair  in \eqref{lax0t} and obtain the Cauchy problem:
\begin{align}
	 \left\{ \begin{array}{ll}
	\partial_t\Psi(t,x)=T(q^{(1)},z_1)(t,x)\Psi(t,x)	,\\
	\Psi(0,x)=\Psi_0(x).\\
	\end{array}\right.
\end{align}
We continue to define
\begin{align}
	\mu(t,x)=\Psi(t,x) e^{iz_1^2x\sigma_3}.\label{equ Phit}
\end{align}
It also satisfies the same form as the $t$-part of the Lax pair  in \eqref{lax0t}:
\begin{align}
	\mu_t=-2iz_1^4\sigma_3\mu+V(q^{(1)},z_1)\mu,\label{mut}
\end{align}
which leads to the following integral equation
\begin{align}
	\mu(t,x)=e^{-2iz_1^4t\sigma_3}\mu(0,x)+\int_0^te^{-2iz_1^4\sigma_3(\tau-t)}V(q^{(1)},z_1)(\tau,x)\mu(\tau,x)\dif \tau.\label{int mu t}
\end{align}
The following lemma characterizes the matrix function  $\mu(t,x)$.
\begin{lemma}\label{lemma2}
	If  $q^{(1)}\in  C([0,\infty),H^3(\mathbb{R}))$ with $\|q^{(1)}\|_2=\|q^{(1)}_0\|_2$ sufficient
	small, the matrix function $\mu(t,x)$ given by the integral equation \eqref{int mu t} exists uniquely such that for every $t\geq0$, $\mu_{11}(t,\cdot),\ \mu_{22}(t,\cdot)\in L^\infty(\mathbb{R})$ and  $\mu_{21}(t,\cdot),\ \mu_{12}(t,\cdot)\in L^\infty(\mathbb{R})\cap L^2(\mathbb{R})$. For every $t\geq0$, $\mu(t,x)$ as a function of $x$ satisfies the $x$-equation in \eqref{equ:mu} subject to the boundary values
	\begin{align}
		\lim_{x\to-\infty}\mu_1(t,x)=\left(\begin{array}{cc}
			e^{-2iz_1^4t}  \\
			0
		\end{array}\right),\qquad	\lim_{x\to+\infty}\mu_2(t,x)=\left(\begin{array}{cc}
			0\\
			e^{2iz_1^4t}
		\end{array}\right),\label{limx mu}
	\end{align}
	with
	\begin{align}	\|\mu_{11}(t,\cdot)-e^{-2iz_1^4t}\|_\infty+\|\mu_{21}(t,\cdot)\|_{2\cap\infty}+\|\mu_{22}(t,\cdot)-e^{2iz_1^4t}\|_\infty+\|\mu_{12}(t,\cdot)\|_{2\cap\infty}\lesssim\|q^{(1)}\|_2.\label{estnorm}
	\end{align}
\end{lemma}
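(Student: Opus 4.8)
The plan is to read \eqref{int mu t} as a linear Volterra integral equation in the time variable and to run a contraction argument in $C([0,T];X)$, where $X=L^\infty(\mathbb{R})\times(L^\infty(\mathbb{R})\cap L^2(\mathbb{R}))$ carries exactly the $x$-norms appearing in \eqref{estnorm}. Since $q^{(1)}(t,\cdot)\in H^3(\mathbb{R})$ for each $t$, the entries of $V(q^{(1)},z_1)$ built from $q^{(1)}$, $q^{(1)}_x$ and $|q^{(1)}|^2q^{(1)}$ lie in $L^2(\mathbb{R})\cap L^\infty(\mathbb{R})$ uniformly on compact time intervals, so multiplication by $V$ maps $X$ into itself and the map $\mu\mapsto e^{-2iz_1^4t\sigma_3}\mu(0,\cdot)+\int_0^t e^{-2iz_1^4\sigma_3(\tau-t)}V\mu\,d\tau$ is well defined on $C([0,T];X)$. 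The diagonal propagator $e^{-2iz_1^4\sigma_3(\tau-t)}$ has entries of modulus $e^{\pm 4\xi\eta(\tau-t)}$, bounded by $e^{4|\xi\eta|T}$ on $0\le\tau\le t\le T$; the Volterra structure $\int_0^t$ then gives the usual factorial gain, so the Neumann series converges and produces a unique fixed point on every $[0,T]$, hence on $[0,\infty)$, to which I would apply the H\"older and Minkowski bounds of Proposition \ref{Prop 3.1} to place each $\mu_{ij}$ in the asserted space.

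For the claim that $\mu(t,\cdot)$ solves the $x$-equation \eqref{equ:mu} and for the boundary values \eqref{limx mu}, the key is the Lax compatibility. By construction $\mu$ (equivalently $\Psi$ in \eqref{equ Phit}) satisfies the $t$-part \eqref{mut}, i.e. $\Psi_t=T(q^{(1)},z_1)\Psi$. Setting $R:=\Psi_x-X(q^{(1)},z_1)\Psi$, a direct computation using $\Psi_{xt}=(T\Psi)_x$ gives $R_t=TR+(T_x-X_t+[T,X])\Psi$, and the bracket vanishes precisely because $q^{(1)}$ solves \eqref{DNLS}, that is, by the zero-curvature identity $X_t-T_x+[X,T]=0$. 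Thus $R$ obeys the homogeneous linear ODE $R_t=TR$ in $t$; since $\Psi_0$ solves the $x$-equation by Proposition \ref{Prop 3.1} we have $R(0,\cdot)=0$, and uniqueness forces $R\equiv0$, so $\mu(t,\cdot)$ satisfies \eqref{equ:mu}. The limits \eqref{limx mu} then follow by sending $x\to-\infty$ (resp. $x\to+\infty$) in \eqref{int mu t}: the initial column $\mu_1(0,\cdot)$ tends to $(1,0)^T$ by Proposition \ref{Prop 3.1}, while $V(q^{(1)},z_1)(\tau,x)\mu(\tau,x)\to0$ pointwise and is dominated uniformly for $\tau\in[0,t]$, so dominated convergence removes the integral and leaves $e^{-2iz_1^4t\sigma_3}(1,0)^T$.

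For the estimate \eqref{estnorm} I would subtract the explicit leading exponentials. For the first column, set $\nu_1:=\mu_1-(e^{-2iz_1^4t},0)^T$; then \eqref{int mu t} becomes $\nu_1=\mathcal{T}\nu_1+s_1$, where $\mathcal{T}$ is the integral operator of \eqref{int mu t} and the source $s_1$ collects the propagated initial error $e^{-2iz_1^4t\sigma_3}(\mu_1(0,\cdot)-(1,0)^T)$ together with the inhomogeneous term $\int_0^t e^{-2iz_1^4\sigma_3(\tau-t)}V(q^{(1)},z_1)(e^{-2iz_1^4\tau},0)^T\,d\tau$. The first piece is $\lesssim\|q^{(1)}_0\|_2$ by Proposition \ref{Prop 3.1}, and the second is $\lesssim\|q^{(1)}\|_2$ because every entry of $V$ carries one factor of $q^{(1)}$ or $q^{(1)}_x$, estimated in $L^2$ exactly as in Proposition \ref{Prop 3.1}. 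Since $\|q^{(1)}(t)\|_2=\|q^{(1)}_0\|_2$ is small and constant in time, $\mathcal{T}$ is a contraction, giving $\|\nu_1\|_X\lesssim\|q^{(1)}\|_2$; the same computation for the second column yields \eqref{estnorm}.

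I expect the main obstacle to be the bookkeeping forced by the genuinely complex phase $z_1^4$ (recall $\eta=\mathrm{Im}\,z_1^2>0$): one diagonal entry of the time propagator always has modulus $\ge 1$ and can grow like $e^{4|\xi\eta|t}$, so the constants in \eqref{estnorm} are controlled only on finite time windows, and one must extract the small factor $\|q^{(1)}\|_2$ without losing the mixed $L^\infty\cap L^2$ structure in $x$. A secondary technical point is justifying the differentiation under the integral sign used in the compatibility step, which is exactly where the higher regularity $q^{(1)}\in C([0,\infty),H^3(\mathbb{R}))$ (hence $q^{(1)}\in C^1([0,\infty),H^1(\mathbb{R}))$) assumed in the hypothesis is needed.
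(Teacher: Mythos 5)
Your existence--uniqueness step for \eqref{int mu t}, the residual (zero-curvature) argument showing that $\mu(t,\cdot)$ solves the $x$-equation \eqref{equ:mu}, and the dominated-convergence derivation of the boundary values \eqref{limx mu} all match the paper's proof in substance. The genuine gap is in your final step, where you try to extract \eqref{estnorm} from the \emph{time}-Volterra equation. The claim that $\mathcal{T}$ is a contraction ``since $\|q^{(1)}(t)\|_2=\|q^{(1)}_0\|_2$ is small and constant in time'' is false for the time-integral operator: its norm on $C([0,T];X)$ is governed by $L^\infty_x$-type norms of the entries of $V(q^{(1)},z_1)$ (which contain $q^{(1)}_x$, $|q^{(1)}|^2$ and $|q^{(1)}|^2q^{(1)}$, controlled by $H^1$/$H^3$ norms that are neither conserved nor small), by the length $T$ of the time window (even a bounded kernel integrated over $[0,t]$ accumulates a factor $t$; the Volterra factorial gain rescues existence, not smallness), and by the propagator $e^{-2iz_1^4\sigma_3(\tau-t)}$, one of whose diagonal entries grows like $e^{4|\xi\eta|(t-\tau)}$ whenever $\re z_1^2\neq 0$. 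The same defects infect your source-term bound: the $z_1\bar{q}^{(1)}_x$ entry of $V$ cannot be estimated by $\|q^{(1)}\|_2$, and for one sign of $\xi\eta$ the $\tau$-integral does not even stay bounded as $t\to\infty$. You concede this yourself (``constants \ldots controlled only on finite time windows''), but that concession is fatal: \eqref{estnorm} asserts a constant independent of $t$, and this uniformity is exactly what the $\sup_{t\in\mathbb{R}^+}$ in Theorem \ref{mainthm} consumes.

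The paper's resolution, which you came within one step of, is to use the time evolution only for \emph{qualitative} facts: Gronwall bounds in $t$ (pointwise in $x$) give $\mu_1(t,\cdot)\in L^\infty(\mathbb{R})$ and $\mu_{21}(t,\cdot)\in L^2(\mathbb{R})$ on finite time intervals, the residual argument gives \eqref{equ:mu}, and dominated convergence gives \eqref{limx mu}. With these in hand, one integrates the $x$-equation from $x=-\infty$ to convert $\mu_1(t,\cdot)$, at each \emph{fixed} $t$, into a solution of the spatial Volterra equation
\begin{align*}
\mu_1(t,x)=\left(\begin{array}{cc} e^{-2iz_1^4t}\\ 0\end{array}\right)
+\int^x_{-\infty}\left(\begin{array}{cc} 1 & 0\\ 0 & e^{2iz_1^2(x-s)}\end{array}\right)U(q^{(1)},z_1)(t,s)\,\mu_1(t,s)\dif s,
\end{align*}
and then reruns the fixed-point argument of Proposition \ref{Prop 3.1} in the $x$-variable. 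There the contraction constant really is $\lesssim\|q^{(1)}(t,\cdot)\|_2$, because the kernel $\chi_+(x-s)e^{2iz_1^2(x-s)}$ supplies the spatial decay $e^{-2\eta(x-s)}$ that H\"older and Minkowski convert into $L^2_x$ smallness; since $\|q^{(1)}(t,\cdot)\|_2=\|q^{(1)}_0\|_2$ is conserved and small, the resulting bound \eqref{estnorm} is uniform in $t$. In short: contract in $x$ at fixed $t$, not in $t$ -- your time-direction contraction cannot produce the time-independent constant the lemma requires.
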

\begin{proof}
	Similar to Proposition \eqref{Prop 3.1}, we take $\mu_1(t,x)$ as an example to give the proof.
	For convenience, we denote  $V(q^{(1)},z_1)=(V_{ij})_{2\times 2}$, which is in $C^1(\mathbb{R}^+,L^2(\mathbb{R}))\cap C(\mathbb{R}^+,H^2(\mathbb{R}))$. The existence of $\mu_1(t,x)$ can be constructed  by iterative sequence.
	 Firstly, we will show that $\mu_1(t,\cdot)\in L^\infty(\mathbb{R})$. Since $\mu_1(0,x)$ is bounded,  in view of  Proposition \ref{Prop 3.1}, it is also a continuous function. This thus implies  from \eqref{mut} that
	\begin{align*}
		\partial_t(|\mu_{11}|^2+|\mu_{21}|^2)=&4\im z_1^4(|\mu_{11}|^2-|\mu_{21}|^2)+2\re V_{11}(|\mu_{11}|^2-|\mu_{21}|^2)\\
		&+2\re\mu_{11}\bar{\mu}_{21}(\bar{V}_{12}+V_{21}).
	\end{align*}
	Invoking the exact expression of $V(q^{(1)},z_1)$ in \eqref{def V}, it is adduced that
	\begin{align*}
		\partial_t(|\mu_{11}|^2+|\mu_{21}|^2)\leq\left( 4 |\im z_1^4|+|q^{(1)}|^2|\im z_1^2|+4|\im z_1^3||q^{(1)}|+2|\im z_1|(|q^{(1)}_x|+|q^{(1)}|^3) \right)  (|\mu_{11}|^2+|\mu_{21}|^2).
	\end{align*}
	Applying  Gronwall's inequality, for any $T_0 >0$, reveals  that for every $t\in[0,T_0]$,
	\begin{align*}
		|\mu_{11}(t,x)|^2+|\mu_{21}(t,x)|^2\leq e^{C(T_0)T_0}(|\mu_{11}(0,x)|^2+|\mu_{21}(0,x)|^2),
	\end{align*}
	where
	\begin{align*}
		C(T_0)=\sup_{t\in[0,T_0],x\in\mathbb{R}}\left( 4 |\im z_1^4|+|q^{(1)}(t,x)|^2|\im z_1^2|+4|\im z_1^3||q^{(1)}(t,x)|+2|\im z_1|(|q^{(1)}_x(t,x)|+|q^{(1)}(t,x)|^3) \right)
	\end{align*}
	remains bounded for any finite $T_0$. %In fact, for any $t>0$, via \cite{DNLSbounded}, it follows that
%	\begin{align*}
%	\sup_{t>0,x\in\mathbb{R}}&\left( 4 |\im z_1^4|+|q^{(1)}(t,x)|^2|\im z_1^2|+4|\im z_1^3||q^{(1)}(t,x)|+2|\im z_1|(|q^{(1)}_x(t,x)|+|q^{(1)}(t,x)|^3) \right) \\
%	&\lesssim1+\|q^{(1)}(t,\cdot)\|_{H^2}+\|q^{(1)}(t,\cdot)\|_{H^2}^2+\|q^{(1)}(t,\cdot)\|_{H^2}^3\leq C(\|q^{(1)}(t,\cdot)\|_{H^2}).
%	\end{align*}
Thus for any $t>0$,  $\mu_1(t,\cdot)\in L^\infty(\mathbb{R})$.

Secondly, we will prove that $\mu_1(t,x)$ as a function of $x$ satisfies the $x$-equation in \eqref{equ:mu}, namely,
	\begin{align}
		\partial_x\mu_1=-2iz_1^2\left(\begin{array}{cc}
			0   \\
			-\mu_{21}
		\end{array}\right)+U(q^{(1)},z_1)\mu_1.
	\end{align}
To this end, it follows from $\mu_1(t,\cdot)\in L^\infty(\mathbb{R})$ and $\mu_1$ admitting the differential equation \eqref{mut}  that $\mu_1(\cdot,x)\in C^1(\mathbb{R}^+)$. Furthermore, via $V(q^{(1)},z_1)(t\cdot),\ \mu(0,\cdot)\in C^1(\mathbb{R})$, it is inferred  that $\mu_1(t,\cdot)\in C^1(\mathbb{R})$, and $\mu_1\in C^1(\mathbb{R}^+\times\mathbb{R})$. Define a residual function
	\begin{align*}
		R=\left(\begin{array}{cc}
			R_1  \\
			R_2
		\end{array}\right)=\partial_x\mu_1+2iz_1^2\left(\begin{array}{cc}
			0   \\
			-\mu_{21}
		\end{array}\right)-U(q^{(1)},z_1)\mu_1.
	\end{align*}
	Then $R(0,x)\equiv0$. Recalling  the matrix function $X$ and $T$ defined in \eqref{def:XT}, $R(t,x)$ can be rewriten as
	\begin{align*}
		R=\partial_x\mu_1-X\mu_1-iz_1^2\mu_1.
	\end{align*}
It is noted that $q^{(1)}\in  C([0,\infty),H^3(\mathbb{R}))$ being the  solution of the DNLS equation \eqref{DNLS} is equivalent to
	$\partial_xT-\partial_tX+[T,X]=0$. Therefore, it is adduced that
	\begin{align*}
		\partial_tR&=\partial_x(T)\mu_1+T\partial_x(\mu_1)-\partial_t(X)\mu_1-XT\mu_1-iz_1^2T\mu_1\\
		&=T(\partial_x\mu_1-X\mu_1-iz_1^2\mu_1)=T(\partial_x\mu_1-X\mu_1-iz_1^2\mu_1)\\
		&=TR.
	\end{align*}
	An analogous calculation then shows that
	\begin{align*}
		\partial_t(|R_1|^2+|R_2|^2)\leq \left( 4 |\im z_1^4|+|q^{(1)}|^2|\im z_1^2|+4|\im z_1^3||q^{(1)}|+2|\im z_1|(|q^{(1)}_x|+|q^{(1)}|^3) \right) (|R_1|^2+|R_2|^2).
	\end{align*}
	Again by Gronwall's inequality, for any $T_0 >0$, it follows that for every $t\in[0,T_0]$,
	\begin{align*}
		\partial_t(|R_1(t,x)|^2+|R_2(t,x)|^2)\leq e^{C(T_0)T_0}(|R_1(0,x)|^2+|R_2(0,x)|^2)=0.
	\end{align*}
	Then $R(t,x)\equiv0$, which means $\mu(t,x)$  admits the $x$-equation in \eqref{equ:mu}. Thirdly, we will verify that $\mu_{21}(t,\cdot)\in L^2(\mathbb{R})$. To see this, we use  the following estimate
	\begin{align*}
		\partial_t|\mu_{21}|^2\leq &(|\im z_1^4|+2|q^{(1)}|^2|\im z_1^2|+|\im z_1^3||q^{(1)}|+|\im z_1|(|q^{(1)}_x|+|q^{(1)}|^3)) |\mu_{21}|^2\\
		& +(|\im z_1^3||q^{(1)}|+|\im z_1|(|q^{(1)}_x|+|q^{(1)}|^3))|\mu_{11}|^2.
	\end{align*}
An application of Gronwall's inequality, for any $T_0 >0$ then reveals  that for every $t\in[0,T_0]$,
	\begin{align*}
		|\mu_{21}(t,x)|^2\leq& e^{C(T_0)T_0}|\mu_{21}(0,x)|^2 \\
		&+e^{C(T_0)T_0}\int_0^{T_0}(|\im z_1^3||q^{(1)}(\tau,x)|+|\im z_1|(|q^{(1)}_x(\tau,x)|+|q^{(1)}(\tau,x)|^3))|\mu_{11}(\tau,x)|^2\dif \tau .
	\end{align*}
	Then for any finite time $T_0 > 0$, we have
	\begin{align*}
		\|\mu_{21}(t,\cdot)\|_2&\leq e^{C(T_0)T_0}\|\mu_{21}(0,\cdot)\|_2 \\
		&+e^{C(T_0)T_0}\int_0^{T_0}(|\im z_1^3|\|q^{(1)}(\tau,\cdot)\|_2+|\im z_1|(\|q^{(1)}_x(\tau,\cdot)\|_2+\|q^{(1)}(\tau,\cdot)\|_2^3))\|\mu_{11}(\tau,\cdot)\|_{\infty}\dif \tau. \\
		&<\infty
	\end{align*}
which proves that $\mu_{21}(t,\cdot)\in L^2(\mathbb{R})$. Finally,  via Lebesgue's dominated convergence theorem, the integral equation \eqref{int mu t} and the fact that for any $T_0>0$, $V(q^{(1)},z_1)\in L^\infty([0,T_0]\times\mathbb{R})$ and $\underset{x\to\pm\infty}{\lim}V(q^{(1)},z_1)(t,x)=0$, $t\in[0,T_0]$, we obtain
	\begin{align*}
		\lim_{x\to-\infty}(\mu_1(t,x)-e^{-2iz_1^4t\sigma_3}\mu_1(0,x))=0.
	\end{align*}
	
Since from \eqref{int mu1}, $\underset{x\to-\infty}{\lim}\mu_1(0,x)=(1,0)^T$, it then gives   \eqref{limx mu}. Hence
	$\mu_1$ satisfies
	\begin{align}
		&\mu_1(t,x)=\left(\begin{array}{cc}
			e^{-2iz_1^4t}  \\
			0
		\end{array}\right)+\int^x_{-\infty}\left(\begin{array}{cc}
			1 &0 \\
			0&e^{2iz_1^2(x-s)}
		\end{array}\right)U(q^{(1)},z_1)(s)\mu_1(t,s)\dif s.
	\end{align}
	Thus, \eqref{estnorm} can be proved in exactly the same way as Proposition \ref{Prop 3.1}.  Consequently, this completes the proof of Lemma \ref{lemma2}.
\end{proof}
Using the Jost function $\mu(t,x)$, we can construct the B\"acklund transformation of $q^{(1)}(t,x)$. Consider the column vector-valued function $\nu=(\nu_1,\nu_2)^T$, with
\begin{align}
	\nu= e^{a_1+b_1i}e^{-iz_1^2x}\mu_{1}+e^{a_2+b_2i}e^{iz_1^2x}\mu_{2},\label{def: phi}
\end{align}
where $a_1$ and $a_2$ are two nonzero real constants and $b_1,\ b_2\in\mathbb{R}$. Then from Lemma \ref{lemma2},
$\nu$ is also a solution of the Lax pair \eqref{lax0x}-\eqref{lax0t} associated to $q^{(1)}(t,x)$ and spectrum $z_1$.
As shown in \eqref{def:darb}-\eqref{def:darb2}, the B\"acklund transformation of $(q^{(1)}(t,x),\nu)$ is given by
\begin{align}
	&Q=\left(\frac{z_1|\nu_2|^2+\bar{z}_1|\nu_1|^2}{z_1|\nu_1|^2+\bar{z}_1|\nu_2|^2} \right)^2  \left[ \frac{2i(z_1^2-\bar{z}_1^2)\nu_1\bar{\nu}_2}{z_1|\nu_2|^2+\bar{z}_1|\nu_1|^2}-q^{(1)}\right] ,\label{trans q1}\\
	&\Psi^{(Q)}_1=\frac{\bar{\nu}_2}{z_1|\nu_1|^2+\bar{z}_1|\nu_2|^2},\qquad\Psi^{(Q)}_2=\frac{\bar{\nu}_1}{\bar{z}_1|\nu_1|^2+z_1|\nu_2|^2}.\label{trans Qphi}
\end{align}
Here $\Psi^{(Q)}:=\Psi^{(Q)}(Q,z_1)=(\Psi^{(Q)}_1,\Psi^{(Q)}_2)^T$ is the solution of the Lax pair \eqref{lax0x}-\eqref{lax0t} associated to $Q$.

It  then follows from Proposition \ref{Prop 3.1} and Lemma \ref{lemma2} that $\nu\neq 0$, $\partial_x^j\nu_1\in L^\infty(\mathbb{R})\times (L^\infty(\mathbb{R})\cap L^2(\mathbb{R}))$ while $\partial_x^j\nu_2\in (L^\infty(\mathbb{R})\cap L^2(\mathbb{R}))\times L^\infty(\mathbb{R})$, $j=1,2,3$. Then $\Psi^{(Q)}$ has the same properties as $\nu$. This in turn  implies $\partial_t\partial_x\Psi^{(Q)}=\partial_x\partial_t\Psi^{(Q)}$, whose compatibility condition yields that $Q$ is a solution of the  DNLS equation \eqref{DNLS}. The following lemma shows that  $Q$ belongs to a $L^2$-neighborhood of a 1-soliton.
\begin{lemma}\label{lemma3.3}
	Suppose that  $q^{(1)}\in  C([0,\infty),H^3(\mathbb{R}))$ is a  solution of the  DNLS equation \eqref{DNLS} with $\|q^{(1)}\|_2<\delta$ for a  sufficiently small $\delta>0$. Then $Q$ defined in \eqref{trans q1} belongs to $ H^3(\mathbb{R})$ with
	\begin{align}
		\big\|Q-e^{(b_1-b_2+\frac{\re z_1^2(a_1-a_2)}{2\im z_1^2})i}\psi^{z_1}(t,x+\frac{a_1-a_2}{2\im z_1^2})\|_2\leq C(\delta)\|q^{(1)}\big\|_2.
	\end{align}
\end{lemma}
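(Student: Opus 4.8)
The strategy mirrors Lemma~\ref{lemma2.2}, run in the opposite direction: I would first compute exactly the soliton that \eqref{trans q1} produces from the \emph{zero} background, and then estimate the true $Q$ as a perturbation of it controlled by $\|q^{(1)}\|_2$.

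\emph{Baseline.} When $q^{(1)}\equiv0$, Lemma~\ref{lemma2} forces $\mu_{11}=e^{-2iz_1^4t}$, $\mu_{22}=e^{2iz_1^4t}$, $\mu_{12}=\mu_{21}=0$, so \eqref{def: phi} reduces to
\begin{align*}
	\nu^{(0)}_1=e^{a_1+b_1i}e^{-iz_1^2x-2iz_1^4t},\qquad
	\nu^{(0)}_2=e^{a_2+b_2i}e^{iz_1^2x+2iz_1^4t},
\end{align*}
which is, up to the constants $e^{a_j+b_ji}$, exactly the vector used in Section~\ref{sec pre} to generate $\psi^{z_1}$ from $q=0$. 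Inserting $(\nu^{(0)}_1,\nu^{(0)}_2)$ and $q^{(1)}=0$ into \eqref{trans q1} and pulling the common scalar $e^{a_1+a_2}$ out of $z_1|\nu_2|^2+\bar z_1|\nu_1|^2$ collapses every modulus factor into $z_1e^{-2\tilde\theta}+\bar z_1e^{2\tilde\theta}$, where $\tilde\theta$ is the soliton phase shifted by $\tfrac{a_1-a_2}{2}$. Thus the real parts $a_1,a_2$ produce precisely the spatial translation $x\mapsto x+\frac{a_1-a_2}{2\im z_1^2}$, while $b_1,b_2$ together with the residual carrier phase $e^{-2i(\re z_1^2x+\cdots)}$ assemble into the overall phase recorded in the statement; this identifies the B\"acklund transform of $(0,\nu^{(0)})$ as the translated, rotated soliton in the claim.

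\emph{Perturbation.} For the genuine solution I would write $\mu_{11}=e^{-2iz_1^4t}+\tilde\mu_{11}$, $\mu_{22}=e^{2iz_1^4t}+\tilde\mu_{22}$, so that Lemma~\ref{lemma2} gives $\|\tilde\mu_{11}\|_\infty+\|\tilde\mu_{22}\|_\infty+\|\mu_{21}\|_{2\cap\infty}+\|\mu_{12}\|_{2\cap\infty}\lesssim\|q^{(1)}\|_2$, uniformly in $t$ because $\|q^{(1)}(t)\|_2=\|q^{(1)}_0\|_2$ is conserved. Substituting $\nu=\nu^{(0)}+(\text{correction})$ into \eqref{trans q1} and expanding the two building blocks $\frac{\nu_1\bar\nu_2}{z_1|\nu_2|^2+\bar z_1|\nu_1|^2}$ and $\big(\tfrac{z_1|\nu_2|^2+\bar z_1|\nu_1|^2}{z_1|\nu_1|^2+\bar z_1|\nu_2|^2}\big)^2$ about their $q^{(1)}=0$ values reproduces the structure of Lemma~\ref{lemma2.2}: correction quantities $W$ and $\tilde W$, built linearly and quadratically from $\tilde\mu_{11},\tilde\mu_{22},\mu_{12},\mu_{21}$, appear divided by $1+\tilde W$. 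The two facts that powered Lemma~\ref{lemma2.2}, namely $\frac{|\nu^{(0)}_j|^2}{z_1|\nu^{(0)}_2|^2+\bar z_1|\nu^{(0)}_1|^2}\in C^\infty_B(\mathbb{R})$ and $\frac{\nu^{(0)}_1\bar\nu^{(0)}_2}{z_1|\nu^{(0)}_2|^2+\bar z_1|\nu^{(0)}_1|^2}\in\mathcal{S}(\mathbb{R})$, then yield $\|W\|_{2\cap\infty}+\|\tilde W\|_\infty\lesssim\|q^{(1)}\|_2$ and $\big\|\big(\frac{1+\overline{\tilde W}}{1+\tilde W}\big)^2\big\|_\infty\lesssim1$ (the latter also keeping the denominator bounded away from $0$ for small data). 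Collecting terms gives $\big\|Q-e^{(\cdots)i}\psi^{z_1}(t,\cdot+\cdots)\big\|_2\lesssim\|q^{(1)}\|_2$, uniformly in $t\geq0$.

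\emph{Main obstacle and regularity.} The one genuinely delicate point is the exponential growth of $\nu^{(0)}_1,\nu^{(0)}_2$, of sizes $e^{\pm\im z_1^2\,x}$: corrections such as $e^{-iz_1^2x}\tilde\mu_{11}$ are not even bounded on their own and become $L^2\cap L^\infty$-controllable only after cancellation against the matching growth in the denominator. Handling this requires splitting each correction into two pieces carrying the weights $(\Phi^{z_1}_{11})^{-1}$ and $(\Phi^{z_1}_{21})^{-1}$ (the $f+g$ decomposition underlying the norms $\mathrm{A},\mathrm{B}$), so that every surviving term is paired with a Schwartz or bounded-smooth weight; after this bookkeeping the bounds are routine H\"older and Minkowski estimates. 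Finally, $Q\in H^3(\mathbb{R})$ follows from the regularity $\partial_x^j\nu\in L^\infty\times(L^\infty\cap L^2)$ and $(L^\infty\cap L^2)\times L^\infty$ for $j=1,2,3$ recorded above: differentiating \eqref{trans q1} and using $\frac{1+\overline{\tilde W}}{1+\tilde W}\in W^{3,\infty}(\mathbb{R})$ together with the $H^3$ membership of the remaining rational factor, exactly as in the closing step of Lemma~\ref{lemma2.2}, gives $\partial_x^jQ\in L^2(\mathbb{R})$ for $j\leq3$.
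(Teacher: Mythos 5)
Your proposal matches the paper's proof in all essentials: the paper likewise identifies the $q^{(1)}\equiv0$ baseline of \eqref{trans q1} with the translated, rotated soliton \eqref{solit}, expands $\frac{\nu_1\bar{\nu}_2}{z_1|\nu_2|^2+\bar{z}_1|\nu_1|^2}$ and $\left(\frac{z_1|\nu_2|^2+\bar{z}_1|\nu_1|^2}{z_1|\nu_1|^2+\bar{z}_1|\nu_2|^2}\right)^2$ about this baseline with corrections $W_1,W_2$ (your $W,\tilde W$) controlled by the estimate \eqref{estnorm} of Lemma \ref{lemma2}, and absorbs the $e^{\pm\im z_1^2 x}$ growth by pairing each term with the weights $\frac{1}{z_1e^{2a_2}e^{2\re\theta}+\bar{z}_1e^{2a_1}e^{-2\re\theta}}\in\mathcal{S}(\mathbb{R})$ and $\frac{e^{\pm2\im z_1^2x}}{z_1e^{2a_2}e^{2\re\theta}+\bar{z}_1e^{2a_1}e^{-2\re\theta}}\in C_0^\infty(\mathbb{R})$, exactly your cancellation-against-the-denominator mechanism (the paper does this directly with these explicit weights rather than re-invoking the $\mathrm{A},\mathrm{B}$ decomposition, a cosmetic difference). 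The $H^3$ regularity of $Q$ is likewise read off from $\partial_x^j\nu$, $j\leq3$, as you state.
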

\begin{proof}
	$Q \in H^3(\mathbb{R})$ could be verified  from its definition and the property of $\Psi^{(Q)}$ immediately. On the other hand,  denoting
	\begin{align*}
		\theta=i(z_1^2x+2z_1^4t),
	\end{align*}
	a direct calculation then gives  that
		\begin{align}
	&e^{(b_1-b_2+\frac{\re z_1^2(a_1-a_2)}{2\im z_1^2})i}\psi^{z_1}(t,x+\frac{a_1-a_2}{2\im z_1^2})=\nonumber\\
	&\left( \frac{z_1e^{a_2-a_1}e^{2\re\theta}+\bar{z}_1e^{a_1-a_2}e^{-2\re\theta}}{z_1e^{a_2-a_1}e^{-2\re\theta}+\bar{z}_1e^{a_1-a_2}e^{2\re\theta}}\right) ^2\times
	\frac{2i(z_1^2-\bar{z}_1^2)e^{(b_1-b_2)i}e^{-2\im\theta i}}{z_1e^{a_2-a_1}e^{2\re\theta}+\bar{z}_1e^{a_1-a_2}e^{-2\re\theta}}.\label{solit}
\end{align}
We now rewrite the expression
\begin{align}
		\frac{\nu_1\bar{\nu}_2}{z_1|\nu_2|^2+\bar{z}_1|\nu_1|^2}=&\frac{e^{(b_1-b_2)i}e^{-2\im\theta i}}{z_1e^{a_2-a_1}e^{2\re\theta}+\bar{z}_1e^{a_1-a_2}e^{-2\re\theta}}\nonumber\\
		&+\frac{W_1-W_2\frac{e^{(b_1-b_2)i}e^{-2\im\theta i}}{z_1e^{a_2-a_1}e^{2\re\theta}+\bar{z}_1e^{a_1-a_2}e^{-2\re\theta}}}{1+W_2},\label{equ 1}\\
		\left(\frac{z_1|\nu_2|^2+\bar{z}_1|\nu_1|^2}{z_1|\nu_1|^2+\bar{z}_1|\nu_2|^2} \right)^2 =&\left( \frac{z_1e^{a_2-a_1}e^{2\re\theta}+\bar{z}_1e^{a_1-a_2}e^{-2\re\theta}}{z_1e^{a_2-a_1}e^{-2\re\theta}+\bar{z}_1e^{a_1-a_2}e^{2\re\theta}}\right) ^2\left( \dfrac{1+\bar{W}_2}{1+W_2}\right) ^2,\label{equ 2}
	\end{align}
where
	\begin{align*}
	W_1=\frac{1}{z_1e^{2a_2}e^{2\re\theta}+\bar{z}_1e^{2a_1}e^{-2\re\theta}}&\left[\left(  c_1\bar{c}_2(\mu_{11}-e^{-2iz_1^4t})e^{-2\xi  ix}+|c_2|^2\mu_{12}e^{-2\im z_1^2x}e^{-ix\xi }\right) e^{2i\bar{z}_1^4t}\right.\\
	&+e^{-2iz_1^4t}\left(|c_1|^2 \bar{\mu}_{21}e^{2\im z_1^2x}+ c_1\bar{c}_2\overline{(\mu_{22}-e^{2iz_1^4t})}e^{2i\xi x}\right) \\
	&+|c_1|^2(\mu_{11}-e^{-2iz_1^4t})\bar{\mu}_{21}e^{2\im z_1^2x}+ c_1\bar{c}_2\overline{(\mu_{22}-e^{2iz_1^4t})}(\mu_{11}-e^{-2iz_1^4t})e^{-ix\xi }\\
	&\left.+c_2\bar{c}_1\mu_{12}\bar{\mu}_{21}e^{2\im z_1^2x}+|c_2|^2\mu_{12}\overline{(\mu_{22}-e^{2iz_1^4t})}e^{-2\im z_1^2x}\right] ,\\
	W_2=\frac{\bar{z}_1}{z_1e^{2a_2}e^{2\re\theta}+\bar{z}_1e^{2a_1}e^{-2\re\theta}}&\left[2\re\left(  c_1e^{-2iz_1^4t}\left(\overline{c_1(\mu_{11}-e^{-2iz_1^4t})}e^{2\im z_1^2x}+ \bar{c}_2\bar{\mu}_{12}e^{-2ix\xi }\right)\right) \right.\\
	&+|c_1(\mu_{11}-e^{-2iz_1^4t})|^2e^{2\im z_1^2x}+|c_2\mu_{12}|^2e^{-2\im z_1^2x}\\
	&\left.+2\re (c_1c_2(\mu_{11}-e^{-2iz_1^4t})\mu_{12}) \right] \\
	+\frac{z_1}{z_1e^{2a_2}e^{2\re\theta}+\bar{z}_1e^{2a_1}e^{-2\re\theta}}&\left[ 2\re c_2e^{2iz_1^4t}\left(\overline{c_1\mu_{21}}e^{2i\xi  x} +\bar{c}_2\overline{(\mu_{22}-e^{2iz_1^4t})}e^{-2\im z_1^2 x}\right) \right.\\
	&+|c_1\mu_{21}|^2e^{2\im z_1^2 x}+|c_2(\mu_{22}-e^{2iz_1^4t})|e^{-2\im z_1^2x}\\
	&\left.+2\re(c_1c_2\mu_{21}(\mu_{22}-e^{2iz_1^4t}))\right] .
\end{align*}
Here for convenience we denote $c_j=e^{a_j+b_ji}$, $j=1,2$.

We now know that as  functions of $x$,
	\begin{align*}
		\frac{1}{z_1e^{2a_2}e^{2\re\theta}+\bar{z}_1e^{2a_1}e^{-2\re\theta}}\in \mathcal{S}(\mathbb{R}),\qquad	\frac{e^{\pm2\im z_1^2x}}{z_1e^{2a_2}e^{2\re\theta}+\bar{z}_1e^{2a_1}e^{-2\re\theta}}\in C_0^\infty(\mathbb{R}),
	\end{align*}
	and as  functions of $t$, on the other hand,
	\begin{align*}
		\frac{1}{z_1e^{2a_2}e^{2\re\theta}+\bar{z}_1e^{2a_1}e^{-2\re\theta}},\qquad	\frac{e^{\pm2 \im z_1^4t}}{z_1e^{2a_2}e^{2\re\theta}+\bar{z}_1e^{2a_1}e^{-2\re\theta}}\in \mathcal{S}(\mathbb{R}).
	\end{align*}
	It then  follows that
	\begin{align*}
		\|W_1\|_2\lesssim&\|\mu_{11}-e^{-2iz_1^4t}\|_\infty+\|\mu_{22}-e^{2iz_1^4t}\|_\infty+\|\mu_{21}\|_2+\|\mu_{12}\|_2\\
		&+\|\mu_{11}-e^{-2iz_1^4t}\|_\infty(\|\mu_{21}\|_2+\|\mu_{22}-e^{2iz_1^4t}\|_\infty)+\|\mu_{12}\|_2(\|\mu_{21}\|_\infty+\|\mu_{22}-e^{2iz_1^4t}\|_\infty),\\
		\|W_2\|_\infty\lesssim&\|\mu_{11}-e^{-2iz_1^4t}\|_\infty+\|\mu_{22}-e^{2iz_1^4t}\|_\infty+\|\mu_{21}\|_\infty+\|\mu_{12}\|_\infty\\
		&+(\|\mu_{11}-e^{-2iz_1^4t}\|_\infty+\|\mu_{22}-e^{2iz_1^4t}\|_\infty+\|\mu_{21}\|_\infty+\|\mu_{12}\|_\infty)^2.
	\end{align*}
In view of  \eqref{estnorm}, we conclude that when $\|q^{(1)}\|_2$ is small,
	\begin{align*}
		\|W_1\|_2,\ \|W_2\|_\infty\lesssim&\|q^{(1)}\|_2.
	\end{align*}
This in turn  implies that
\begin{align*}
\big\|\left( \dfrac{1+\bar{W}_2}{1+W_2}\right) ^2-1\big\|_\infty=\big\|\left(\dfrac{1+\bar{W}_2}{1+W_2}+1 \right) \dfrac{\bar{W}_2-W_2}{1+W_2} \big\|_\infty\lesssim\|q^{(1)}\|_2.
\end{align*}
Substituting \eqref{solit} and  equality \eqref{equ 1}-\eqref{equ 2}  into \eqref{trans q1}, we obtain
	\begin{align*}
		Q&-e^{(b_1-b_2+\frac{\re z_1^2(a_1-a_2)}{2\im z_1^2})i}\psi^{z_1}(t,x+\frac{a_1-a_2}{2\im z_1^2})\\
		=&-\left( \frac{z_1e^{a_2-a_1}e^{2\re\theta}+\bar{z}_1e^{a_1-a_2}e^{-2\re\theta}}{z_1e^{a_2-a_1}e^{-2\re\theta}+\bar{z}_1e^{a_1-a_2}e^{2\re\theta}}\right) ^2\left( \dfrac{1+\bar{W}_2}{1+W_2}\right) ^2q^{(1)}\\
		&+e^{(b_1-b_2+\frac{\re z_1^2(a_1-a_2)}{2\im z_1^2})i}\psi^{z_1}(t,x+\frac{a_1-a_2}{2\im z_1^2})\left(1-\left( \dfrac{1+\bar{W}_2}{1+W_2}\right) ^2 \right) \\
		&+\left( \frac{z_1e^{a_2-a_1}e^{2\re\theta}+\bar{z}_1e^{a_1-a_2}e^{-2\re\theta}}{z_1e^{a_2-a_1}e^{-2\re\theta}+\bar{z}_1e^{a_1-a_2}e^{2\re\theta}}\right) ^2\frac{W_1-W_2\frac{e^{(b_1-b_2)i}e^{-2\im\theta i}}{z_1e^{a_2-a_1}e^{2\re\theta}+\bar{z}_1e^{a_1-a_2}e^{-2\re\theta}}}{1+W_2}.
	\end{align*}
Consequently, there exists a small $\delta>0$ such that when $\|q^{(1)}\|_2<\delta$,
	\begin{align*}
		&\|	Q-e^{(b_1-b_2+\frac{\re z_1^2(a_1-a_2)}{2\im z_1^2})i}\psi^{z_1}(t,x+\frac{a_1-a_2}{2\im z_1^2})\|_2\\
		&\leq\bigg\|\left( \frac{z_1e^{a_2-a_1}e^{2\re\theta}+\bar{z}_1e^{a_1-a_2}e^{-2\re\theta}}{z_1e^{a_2-a_1}e^{-2\re\theta}+\bar{z}_1e^{a_1-a_2}e^{2\re\theta}}\right) ^2\left( \dfrac{1+\bar{W}_2}{1+W_2}\right) ^2\bigg\|_\infty\|q^{(1)}\|_2\\
		&+\|\psi^{z_1}(t,\cdot)\|_2\big\|\left( \dfrac{1+\bar{W}_2}{1+W_2}\right) ^2-1 \big\|_\infty \\
		&+\bigg\| \frac{z_1e^{a_2-a_1}e^{2\re\theta}+\bar{z}_1e^{a_1-a_2}e^{-2\re\theta}}{z_1e^{a_2-a_1}e^{-2\re\theta}+\bar{z}_1e^{a_1-a_2}e^{2\re\theta}} \bigg\|_\infty^2\bigg\|\frac{W_1-W_2\frac{e^{(b_1-b_2)i}e^{-2\im\theta i}}{z_1e^{a_2-a_1}e^{2\re\theta}+\bar{z}_1e^{a_1-a_2}e^{-2\re\theta}}}{1+W_2}\bigg\|_2\\
		&\lesssim\|q^{(1)}\|_2.
	\end{align*}
	This completes the proof of Lemma \ref{lemma3.3}.
\end{proof}

\section{Proof  of stability theorem }\label{sec: proof}
We are now in a position to provide a proof of Theorem \ref{mainthm}.

\begin {proof} [Proof of Theorem \ref{mainthm}] The proof  is divided into two steps. Firstly, we establish the $L^2$-orbital stability of the DNLS equation \eqref{DNLS} under the condition $q_0\in H^3(\mathbb{R})$.

As demonstrated in Section \ref{sec 2}, the B\"acklund transformation maps an $L^2$-neighborhood of the 1-soliton solution to that of zero. Specifically, Lemma \ref{lemma1} indicates that if $|q_0-\psi^{z_0}_0|_2<\varepsilon$, then the spectral problem \eqref{lax0x} associated with $q_0$ has an eigenvalue $z_1$ and an eigenvector $\Phi:=\Phi(q_0,z_1)$ satisfying the normalization condition \eqref{normPhi}, with $|z_1-z_0|\lesssim |q_0-\psi^{z_0}_0|_2$. Furthermore,
	\begin{align*}
		\|q_0-\psi^{z_1}_0\|_2\leq \|q_0-\psi^{z_0}_0\|_2+\|\psi^{z_1}_0-\psi^{z_0}_0\|_2\lesssim\|q_0-\psi^{z_0}_0\|_2.
	\end{align*}
Let  $\varepsilon$ be sufficiently small, such that $\|q_0-\psi^{z_1}_0\|_2 < \epsilon$ required in Lemma \ref{lemma2.2}.
	It thus follows from  Lemma \ref{lemma2.2} that the B\"acklund transformation constructed by $\Phi$  maps $q_0$ to a small $q_0^{(1)}$ satisfying
	\begin{align*}
		\|q_0^{(1)}\|_2\lesssim\|q_0-\psi^{z_1}_0\|_2\lesssim\|q_0-\psi^{z_0}_0\|_2.
	\end{align*}
	It also maps $\Phi$ to  a vector solution $\Phi^{(1)}:=\Phi^{(1)}(q^{(1)}_0,z_1)=(\Phi^{(1)}_1,\Phi^{(1)}_2)^T$ in \eqref{Phi(1)} of the spectral problem (\ref{lax0x}) associated to $q^{(1)}_0$. Choose $\varepsilon$ to be sufficiently small such that $\|q_0^{(1)}\|_2  <  \delta$ with $\delta$ determined in Lemma \ref{lemma3.3}.
	
We  now focus on the time $t$. Given that the time evolution discussed in Section \ref{sec 3} requires the condition $q_0^{(1)}\in H^3(\mathbb{R})$, we initially assume that $q_0^{(1)}\in C([0,\infty),H^3(\mathbb{R}))$. Let $q^{(1)}\in H^3(\mathbb{R})$ denote the solution of the DNLS equation \eqref{DNLS} with initial value $q_0^{(1)}$. The Lax pair associated with $q^{(1)}$, having a solution under the spectrum $z_1$, yields a column vector function solution $\nu:=\nu(q^{(1)},z_1)$ as defined in \eqref{def: phi}:
	\begin{align}
		\nu(t,x)= e^{a_1(t)+b_1(t)i}e^{-iz_1^2x}\mu_{1}(t,x)+e^{a_2(t)+b_2(t)i}e^{iz_1^2x}\mu_{2}(t,x),\label{nv}
	\end{align}
	where the coefficients $a_j$, $b_j$, $j=1,2$, may rely on time $t$ and $\mu_1$, $\mu_2$ are constructed in Lemma \ref{lemma3.3}. 
	 Note that $\nu$ also admit the $t$-part of Lax pair \eqref{lax0t}. Then via substituting \eqref{nv} into \eqref{lax0t}, it transpires that $a_j$, $b_j$, $j=1,2$ are constants.
	Specially, when $t=0$, by the linear superposition principle, there exists two pairs of constants $(a_j,b_j)$, $j=1,2$, such that
	\begin{align*}
		\Phi^{(1)}(x)= e^{a_1+b_1i}e^{-iz_1^2x}\mu_{1}(0,x)+e^{a_2+b_2i}e^{iz_1^2x}\mu_{2}(0,x)=\nu(0,x),
	\end{align*}
for $\Phi^{(1)}(x)$ given in \eqref{Phi(1)}.
	Therefore, Lemma \ref{lemma3.3} gives that via the B\"acklund transformation \eqref{trans q1}, we obtain a new solution  $Q\in  C([0,\infty),H^3(\mathbb{R}))$ of the  DNLS equation \eqref{DNLS} with
\begin{align}
	\|Q-e^{(b_1-b_2+\frac{\re z_1^2(a_1-a_2)}{2\im z_1^2})i}\psi^{z_1}(t,x+\frac{a_1-a_2}{2\im z_1^2})\|_2\leq C(\delta)\|q^{(1)}\|_2.
\end{align}
	When $t=0$, from its definition one can check that $Q(0,x)=q_0(x)$.
	Indeed, from  the definitions of $q^{(1)}$, $\Phi^{(1)}$  and $Q$ in \eqref{defq01}, \eqref{Phi(1)} and \eqref{trans q1}, respectively, it appears that
	\begin{align*}
		Q(0,x)=\left(\frac{z_1|\Phi^{(1)}_2|^2+\bar{z}_1|\Phi^{(1)}_1|^2}{z_1|\Phi^{(1)}_1|^2+\bar{z}_1|\Phi^{(1)}_2|^2} \right)^2  &\left[ \frac{2i(z_1^2-\bar{z}_1^2)\Phi^{(1)}_1\bar{\Phi}^{(1)}_2}{z_1|\Phi^{(1)}_2|^2+\bar{z}_1|\Phi^{(1)}_1|^2}-q_0^{(1)}\right]\\
		=\left(\frac{z_1|\Phi_1|^2+\bar{z}_1|\Phi_2|^2}{z_1|\Phi_2|^2+\bar{z}_1|\Phi_1|^2} \right)^2&\left[\frac{2i(z_1^2-\bar{z}_1^2)\Phi_1\bar{\Phi}_2(z_1|\Phi_2|^2+\bar{z}_1|\Phi_1|^2)}{(z_1|\Phi_1|^2+\bar{z}_1|\Phi_2|^2)^2}\right.\\
		&\left.- \left(\frac{z_1|\Phi_2|^2+\bar{z}_1|\Phi_1|^2}{z_1|\Phi_1|^2+\bar{z}_1|\Phi_2|^2} \right)^2 \left[\frac{2i(z_1^2-\bar{z}_1^2)\Phi_1\bar{\Phi}_2}{z_1|\Phi_2|^2+\bar{z}_1|\Phi_1|^2}- q_0\right]\right]
		=q_0
	\end{align*}
	It thereby follows from the uniqueness of the global solution of the DNLS equation \eqref{DNLS} with initial value $q(0) =q_0$ that $Q\equiv q$.
Hence we have proved Theorem \ref{mainthm} in  $H^3(\mathbb{R})$, namely, 
\begin{align}
	\|q-e^{(b_1-b_2+\frac{\re z_1^2(a_1-a_2)}{2\im z_1^2})i}\psi^{z_1}(t,x+\frac{a_1-a_2}{2\im z_1^2})\|_2\lesssim\|q_0-\psi^{z_0}_0\|_2.
\end{align}
	
For  $q_0$ only belongs to $L^2(\mathbb{R})$, there exists   an approximating sequence $\{q_{0,n}\}_{n\in\mathbb{N}} \subset\mathcal{S}(\mathbb{R}) \subset H^3(\mathbb{R})$ converging to $q_0$ as $n\to\infty$ in the $L^2$-norm. Without loss of generality, we assume that for every $n\in\mathbb{N}$,
	\begin{align*}
		\|q_{0,n}-\psi^{z_0}_0\|_2\leq \|q_0-\psi^{z_0}_0\|_2.
	\end{align*}
Based on the above analysis, we can identify a sequence of eigenvalues ${z_n,\ }{n\in\mathbb{N}}$  such that $$|z_n-z_0|\lesssim \|q_{0,n}-\psi^{z_0}_0\|_2.$$ Let $q_n \in C([0,\infty),H^3(\mathbb{R}))$ denote the global solution of \eqref{DNLS} with the initial value $q_{0,n}$.
	Then for every $t>0$, there exists three sequences of constants $\{a_n\}$, $\{b_n\}$ and $\{z_n\}$ such that
	\begin{align}
	\|q_n(t,\cdot)-e^{bi}\psi^{z_n}(t,\cdot+a) \|_2\leq C\|q_{0,n}-\psi^{z_0}_0\|_2.\label{est:qn}
	\end{align}
On the other hand, there exists a  sub-sequence $\{n_k\}$  of $\mathbb{N}$ such that $\{z_{n_k}\}$ converges. We denote $\underset{k\to\infty}{\lim}z_{n_k}=z_1$. Note that  $q_n$ converges to $q$ uniformly for $t\in[0,T_0]$ for every $T_0>0$ \cite{KillipDNLS}.
 Namely, for  $\varepsilon=\|q_{0}-\psi^{z_0}_0\|_2$, there exists a $K_0>0$ such that  for any $t\in[0,T_0]$
 \begin{align*}
 	\|q_{n_{K_0}}(t,\cdot)-q(t,\cdot)\|_2\leq \|q_{0}-\psi^{z_0}_0\|_2.
 \end{align*}
It then follows that for any $t\in[0,T_0]$,
\begin{align*}
	\inf_{a,b\in\mathbb{R}}\|q(t,\cdot)-e^{bi}\psi^{z_1}(t,\cdot+a) \|_2\leq& \|q_{n_{K_0}}(t,\cdot)-e^{b_{K_0}i}\psi^{z_{n_{K_0}}}(t,\cdot+a_{K_0}) \|_2+\|q(t,\cdot)-q_{n_{K_0}}(t,\cdot)\|_2\\
	&+\inf_{a,b\in\mathbb{R}}\|e^{bi}\psi^{z_{n_{K_0}}}(t,\cdot+a)-\psi^{z_1}(t,\cdot)\|_2\\
	\lesssim &\|q_{0,{n_{K_0}}}-\psi^{z_0}_0\|_2+\|q(t,\cdot)-q_{n_{K_0}}(t,\cdot)\|_2+|z_{n_{K_0}}-z_1|\\
	\lesssim &\|q_{0}-\psi^{z_0}_0\|_2.
\end{align*}
As the right-hand side of the above estimate is independent of $T_0$, this generates the desired stability estimate in Theorem \ref{mainthm}.
\end {proof}

% Thus, we complete the proof of Theorem \ref{mainthm}.
%On the other hand, we consider the condition $q_0\in H^s(\mathbb{R})$, $s\geq \frac{1}{2}$. If  an approximating sequence $\{q_{0,n}\}_{n\in\mathbb{N}} \subset\mathcal{S}(\mathbb{R}) \subset H^3(\mathbb{R})$ converges to $q_0$ with 	$\|q_{0,n}-\psi^{z_0}_0\|_2$ small enough, then for every $t\in\mathbb{R}^+$,  $q_n$ converges to $q_0$ uniformly,  namely,
%\begin{align*}
%	\|q_n(t,\cdot)-q(t,\cdot)\|_2\leq C\|q_n(0,\cdot)-q(0,\cdot)\|_2.
%\end{align*}
%Then for all $t\in\mathbb{R}^+$ we can  take $j\to\infty$ \eqref{est:qn} and obtain a uniformly  estimate as shown in Corollary \ref{coro}

	\noindent\textbf{Acknowledgements}
	
	The work of Fan and Yang is partially  supported by  NSFC  under grants 12271104, 51879045  and China Postdoctoral Science Foundation. The work of Liu is partially supported by the Simons Foundation under grant 499875.

	\hspace*{\parindent}
	\\
	
\end{document}